\newtheorem{theorem}{Theorem}[section]
\newtheorem{proposition}[theorem]{Proposition}
\newtheorem{lemma}[theorem]{Lemma}
\newtheorem{corollary}[theorem]{Corollary}
\theoremstyle{remark}
\newtheorem{remark}[theorem]{Remark}
\newtheorem{definition}[theorem]{Definition}
\newcommand\RR{\mathbb{R}}
\def\dist{d}
\renewcommand\Re{\operatorname{Re}}
\newcommand\CI{C^\infty}
\def\jap#1{\langle #1 \rangle}
\newcommand\Id{\operatorname{Id}}
\newcommand{\zf}{\mathrm{zf}}
\newcommand{\lb}{\mathrm{lb}}
\newcommand{\rb}{\mathrm{rb}}
\newcommand{\bfc}{\mathrm{bf}}
\newcommand{\ff}{\mathrm{ff}}
\renewcommand{\sf}{\mathrm{sf}}
\newcommand\norsc{N_{sc}}
\newcommand\ib{I_b}
\newcommand{\lbz}{\operatorname{lbz}}
\newcommand{\rbz}{\operatorname{rbz}}
\newcommand{\lbi}{\operatorname{lbi}}
\newcommand{\rbi}{\operatorname{rbi}}
\begin{document}
\title[Riesz transform on metric cones]{The Riesz transform for homogeneous Schr\"odinger operators on metric cones}

\author{Andrew Hassell}
\address{Department of Mathematics, Australian National University \\ Canberra ACT 0200 \\ AUSTRALIA}
\email{Andrew.Hassell@anu.edu.au}

\author{Peijie Lin}
\address{Department of Mathematics, Australian National University \\ Canberra ACT 0200 \\ AUSTRALIA}
\email{Paul.Lin@anu.edu.au}

\thanks{The first author was supported by an ARC Future Fellowship FT0990895 and an ARC Discovery Grant DP1095448}
\subjclass[2000]{42B37, 58J05, 35J05}
\keywords{metric cone, Schr\"odinger operator, Riesz transform, inverse square potential, resolvent}

\begin{abstract}
We consider Schr\"odinger operators on metric cones whose cross section is  a closed Riemannian manifold $(Y, h)$ of dimension $d-1 \geq 2$. Thus the metric on the cone $M = (0, \infty)_r \times Y$ is $dr^2 + r^2 h$. Let $\Delta$ be the Friedrichs Laplacian on $M$ and $V_0$ be a smooth function on $Y$, such that $\Delta_Y + V_0 + (d-2)^2/4$ is a strictly positive operator on $L^2(Y)$, with lowest eigenvalue $\mu^2_0 $ and second lowest eigenvalue $\mu^2_1$, with $\mu_0, \mu_1 > 0$. The operator we consider is $H = \Delta + V_0/r^2$, a Schr\"odinger operator with inverse square potential on $M$; notice that $H$ is homogeneous of degree $-2$. 

We study the Riesz transform
$
T = \nabla H^{-1/2}
$
and determine the precise range of $p$ for which $T$ is bounded on $L^p(M)$. This is achieved by making a precise analysis of the operator $(H + 1)^{-1}$ and determining the complete asymptotics of its integral kernel. We prove that if $V$ is not identically zero, then the range of $p$ for $L^p$ boundedness is 
$$
\Bigg(\frac{d}{\min(1+\frac{d}{2}+\mu_0, d)} \, , \,  \frac{d}{\max(\frac{d}{2}-\mu_0, 0)}\Bigg),
$$
while if $V$ is identically zero, then the range is 
$$
\Bigg(1 \, , \,  \frac{d}{\max(\frac{d}{2}-\mu_1, 0)}\Bigg).
$$
The result in the case $V$ identically zero was first obtained  in a paper by H.-Q. Li \cite{HQL}. 
\end{abstract}

\maketitle

\section{Introduction}
\label{chapter1}

%\subsection{The main results of the paper}\label{vfsection}

The Riesz transform $T$ on the Euclidean space $\mathbb{R}^d$ is defined by 
\begin{equation*}
T=\nabla \Delta_{\mathbb{R}^d}^{-\frac{1}{2}},
\end{equation*}
where $\Delta_{\mathbb{R}^d}$ is the Laplacian operator. 
In this paper we study the Riesz transform $T$ in a more general setting of metric cones. 
A metric cone $M$ is of the form $M = Y\times (0,\infty)$, where $(Y, h)$ is a compact Riemannian manifold with dimension $d-1$. 
The cone $M$ is equipped with the conic metric $g=dr^2+r^2h$. %This metric makes the cross section $Y \times \{ r \}$ of size $O(r)$, so it is natural to complete the cone (in the metric space sense) by adding a single point at $r = 0$; that is, passing to $Y \times [0, \infty)$ quotiented by identifying $Y \times \{ 0 \}$ to a point. 
  The Euclidean space $\mathbb{R}^d$ provides the simplest example of a metric cone, 
with cross section $Y=\mathbb{S}^{d-1}$ with its standard metric. General metric cones enjoy a dilation symmetry analogous to that of Euclidean space, but no other symmetries in general. 

%\begin{figure}[h]
%\centering
%\includegraphics[scale=0.5]{icecream.pdf}
%\caption{The metric cone $M=Y\times[0,\infty)$}
%\label{metriccone}
%\end{figure}

The Laplacian on the cone expressed in polar coordinates is 
\begin{equation}\label{laplaciandefinition}
\Delta=-\partial^2_r-\frac{d-1}{r}\partial_r+\frac{1}{r^2}\Delta_Y,
\end{equation}
where $\Delta_Y$ is the Laplacian on the compact Riemannian manifold $Y$.
Then the Riesz transform $T$ on the cone $M$ is defined by
%\begin{equation}\label{ldpp}
$T=\nabla \Delta^{-\frac{1}{2}}$, 
%\quad \nabla = \big(\partial_r, \frac1{r} \nabla_Y \big).
%\end{equation}
where $\nabla$ is shorthand for $(\partial_r, r^{-1} \nabla_Y)$, or in other words we measure the gradient on the cone using the metric $g$. 
The question of the boundedness of the Riesz transform on cones, i.e.\ for what $p$ the operator $T$ is bounded on $L^p(M)$, 
was answered by H.-Q. Li in \cite{HQL}. 
The characterisation of the boundedness, stated in Theorem \ref{HQLR}, 
is in terms of the second smallest eigenvalue of an operator involving $\Delta_Y$. 
We provide a different proof to this result in Section~\ref{chapter6} of this paper. 

\begin{theorem}\label{HQLR}
Let $d\geq 3$, and $M$ be a metric cone with dimension $d$ and cross section $Y$. 
The Riesz transform $T=\nabla\Delta^{-\frac{1}{2}}$ is bounded on $L^p(M)$ if and only if 
$p$ is in the interval
\begin{equation}\label{cinterval0}
\Bigg(1, \ \frac{d}{\max(\frac{d}{2}-\mu_1, 0)}\Bigg),
\end{equation}
where $\mu_1>0$ is the square root of the second smallest eigenvalue of the operator $\Delta_Y+(\frac{d-2}{2})^2$.
\end{theorem}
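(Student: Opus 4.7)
The plan is to derive Theorem~\ref{HQLR} from a precise analysis of the Schwartz kernel of $\Delta^{-1/2}$, obtained by integrating the resolvent of $\Delta$. Starting from the subordination formula
\begin{equation*}
\Delta^{-1/2} = \frac{2}{\pi}\int_0^\infty (\Delta+\lambda^2)^{-1}\,d\lambda
\end{equation*}
and noting that conjugation by the dilation $r\mapsto r/\lambda$ turns $\Delta+\lambda^2$ into $\lambda^2(\Delta+1)$, the problem reduces to understanding the integral kernel of $(\Delta+1)^{-1}$ together with its scaling behaviour.

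To analyse $(\Delta+1)^{-1}$, I would separate variables using an orthonormal eigenbasis $\{\phi_k\}$ of $\Delta_Y+((d-2)/2)^2$ with eigenvalues $\mu_k^2$. On the $k$-th eigenspace $(\Delta+1)$ becomes a one-dimensional modified Bessel operator of order $\mu_k$, whose resolvent kernel is
\begin{equation*}
K_k(r,r') = (rr')^{-(d-2)/2}\, I_{\mu_k}(r_<)\, K_{\mu_k}(r_>).
\end{equation*}
Integrating in $\lambda$ and summing over $k$ yields the kernel of $\Delta^{-1/2}$; the leading off-diagonal behaviour on the $k$-th mode in the region $r'\ll r$ is a homogeneous kernel of the form $r^{-(d/2+\mu_k)}(r')^{\mu_k-(d-2)/2}$, with a symmetric formula for $r\ll r'$.

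Applying $\nabla=(\partial_r, r^{-1}\nabla_Y)$ produces the Riesz transform kernel. Its diagonal singularity is of standard Calder\'on--Zygmund type and yields $L^p$-boundedness for every $1<p<\infty$. The decisive observation, particular to the pure Laplacian case, is that $\phi_0$ is the constant function on $Y$ with $\mu_0=(d-2)/2$, so $r^{-1}\nabla_Y\phi_0=0$; the $k=0$ mode thereby reduces to a purely radial operator that coincides with the radial Riesz transform on $\mathbb{R}^d$, bounded on every $L^p$ with $1<p<\infty$. All obstructions to boundedness are therefore carried by the modes $k\geq 1$. A Schur-type test on the homogeneous off-diagonal kernel of mode $k$ --- testing against $r^{-d/p}$ and requiring convergence of the split integrals over $\{r'<r\}$ and $\{r'>r\}$ --- shows that this mode contributes a bounded operator on $L^p(M)$ if and only if $p<d/\max(d/2-\mu_k,0)$. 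Since $\mu_1\leq\mu_k$ for all $k\geq 1$, summing over $k$ yields boundedness precisely on the open interval $(1,d/\max(d/2-\mu_1,0))$, with the upper endpoint set by the mode $k=1$. Sharpness is verified by testing against functions of the form $\chi(r)\phi_1(y)$ for a suitably chosen radial profile $\chi$, whose image under $\nabla\Delta^{-1/2}$ saturates the Schur bound.

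The main obstacle will be the rigorous passage from the mode-by-mode Bessel formulae to uniform kernel estimates, particularly in the transitional regimes where the interior diagonal singularity meets the boundary faces of $M^2$ at $r=r'=0$ and $r=r'=\infty$. The natural framework is geometric microlocal analysis: resolve $M^2$ by blowing up the diagonal together with these corner faces, identify $(\Delta+1)^{-1}$ as a polyhomogeneous conormal distribution on the resulting manifold with corners, and read off the required kernel estimates from the index sets at the various boundary hypersurfaces. This setup also provides the natural framework for the more general result with nonzero $V_0$ proved in the body of the paper.
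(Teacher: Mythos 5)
Your proposal follows essentially the same route as the paper: the subordination formula $T=\tfrac{2}{\pi}\int_0^\infty\nabla(\Delta+\lambda^2)^{-1}\,d\lambda$, reduction by homogeneity to $(\Delta+1)^{-1}$, mode-by-mode separation of variables into one-dimensional modified Bessel operators, Calder\'on--Zygmund theory for the near-diagonal piece, weighted Schur-type bounds for the off-diagonal pieces, and the blown-up double space to make the transitional kernel estimates rigorous. The one substantive difference is your treatment of the $k=0$ mode, and it is a valid (arguably slicker) substitute for what the paper does. You observe that with $V_0\equiv 0$ the constant eigenfunction gives $\mu_0=\tfrac{d-2}{2}$ and $r^{-1}\nabla_Y\phi_0=0$, so the $k=0$ projection of $\nabla\Delta^{-1/2}$ is literally the radial Riesz transform on $\RR^d$ (up to the harmless constant $\mathrm{Vol}(Y)/\mathrm{Vol}(\mathbb{S}^{d-1})$ in the measure), hence bounded for all $1<p<\infty$ by transference. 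The paper instead proves this by direct kernel estimate in the proof of Theorem~\ref{HQLR}: because $I_{d/2-1}(r)=cr^{d/2-1}+O(r^{d/2+1})$, the factor $r^{1-d/2}I_{d/2-1}(r)$ is $c+O(r^2)$ so $\nabla_r$ \emph{gains} a power of $r$ rather than losing one, and the $\mu_0$ contribution to $T_2$ is then bounded by $Cr\,{r'}^{-1-d}$ on $\{r\le r'\}$, hence $L^p$-bounded for all $p\in(1,\infty)$ by Corollary~\ref{lemmaupperc}. Both arguments hinge on the same coincidence $\mu_0=\tfrac{d-2}{2}$, and the paper's Remark after Theorem~\ref{Gproperties} essentially acknowledges your Euclidean-transfer viewpoint. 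One thing worth flagging: your sketch says a single mode $k$ contributes a bounded operator ``if and only if $p<d/\max(d/2-\mu_k,0)$,'' but strictly speaking each mode generates two constraints (from $\{r<r'\}$ and $\{r>r'\}$); the lower threshold $p>d/\min(1+\tfrac{d}{2}+\mu_k,d)$ from $\{r>r'\}$ happens to be trivial (equal to $1$) for all $k\ge 1$ only because $\mu_k>\mu_0=\tfrac{d-2}{2}$ forces $1+\tfrac{d}{2}+\mu_k>d$, and you should state this explicitly to justify that the lower endpoint of the interval in \eqref{cinterval0} really is $1$.
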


More significantly, the methods used in this paper to prove Theorem \ref{HQLR}
can be applied to study the boundedness properties of a more generalised class of operators, 
obtained by introducing an inverse square potential to the Laplacian. 
Let $V_0: Y\rightarrow\mathbb{C}$ be a function on $Y$ satisfying the condition
\begin{equation}
\Delta_Y+V_0(y)+(\frac{d-2}{2})^2>0
\label{V0condition}\end{equation}
and define 
\begin{equation}
H = \Delta + \frac{V_0}{r^2}.
\label{Hdefn}\end{equation}
Notice that $H$ is homogeneous of degree $-2$, like the Laplacian. Condition \eqref{V0condition} ensures that  $H$ is a strictly positive operator, so $H^{-1/2}$ is well-defined. We can then define 
the Riesz transform $T$ of the Schr\"odinger operator $H$ by
\begin{equation}
T= \nabla H^{-1/2} = \nabla\bigg(\Delta+\frac{V_0(y)}{r^2}\bigg)^{-\frac{1}{2}}.
\label{Tpotdefn}\end{equation}
%The Laplacian $\Delta$ is homogeneous of degree $-2$, 
%and adding an inverse square potential term preserves this homogeneity.  
%That's why we consider inverse square potentials. 
%The function $V_0$ is restricted to satisfy the condition
%$\Delta_Y+V_0(y)+(\frac{d-2}{2})^2>0$ in order to guarantee that 
%$\Delta+\frac{V_0(y)}{r^2}$ is positive. 
Notice that \eqref{V0condition} allows our potential $V=\frac{V_0}{r^2}$ to be ``a bit negative''; in particular, it allows $V_0$ to be any constant greater than $-(d-2)^2/4$. 

The goal of this article is to find the exact interval for $p$ on which 
the Riesz transform $T$ with an inverse square potential $V={V_0}/{r^2}$ is
bounded on $L^p(M)$, where $M$ is a metric cone with dimension $d\geq 3$. 

A necessary condition, stated in Theorem \ref{Andrew}, for the boundedness was found
in \cite{GH} by C. Guillarmou and the first author, in a slightly different setting --- asymptotically conic manifolds. 
These are complete Riemannian manifolds $(M^\circ, g)$ such that $M^\circ$ is the interior of a compact manifold with boundary, $\overline{M}$, which has a boundary defining function $x$ for which the metric $g$ has the form 
$$\frac{dx^2}{x^4} + \frac{h(x)}{x^2},$$ in a collar neighbourhood of $\partial \overline{M}$, where $h(x)$ is a family of metrics on $\partial \overline{M}$. Here $r = 1/x$ behaves like the radial coordinate on the cone over $\partial \overline{M}$: the metric in terms of $r$ reads $g = dr^2 + r^2 h(1/r)$, so is asymptotic to the conic metric $dr^2 + r^2 h(0)$ as $r \to \infty$. In \cite{GH}, potentials of the form $V \in x^2 \CI(\overline{M})$ were considered; that is, the potentials decay as $r^{-2}$ at infinity, and the  limiting `potential at infinity' $V_0$ was defined by $V_0 := x^{-2} V |_{\partial \overline{M}}$. 

\begin{theorem}\label{Andrew}(\cite[Theorem 1.5]{GH})
Let $d\geq 3$, and $(M^\circ, g)$ be an asymptotically conic manifold with dimension $d$. 
Consider the operator $\mathbf{P}=\Delta_g+V$ with $V \in x^2 \CI(\overline{M})$ satisfying 
\begin{equation}\label{posop}
\Delta_{\partial M}+V_0+\bigg(\frac{d-2}{2}\bigg)^2>0\mbox{ on $L^2(Y)$, where }V_0=\frac{V}{x^2}\bigg|_{\partial M}.
\end{equation}
Let $\mu_0 > 0$ be the square root of the lowest eigenvalue of the operator \eqref{posop}. 
Suppose that $\mathbf{P}$ has no zero modes or zero resonance
and that $V_0\not\equiv 0$. Then $\nabla \mathbf{P}^{-1/2}$
is unbounded on $L^p(M)$ if $p$ is outside the interval 
\begin{equation}\label{Aninterval}
\Bigg(\frac{d}{\min(\frac{d}{2}+1+\mu_0, d)}, \ \frac{d}{\max(\frac{d}{2}-\mu_0, 0)}\Bigg). 
\end{equation}
%where $\mu_0>0$ is the square root of the smallest eigenvalue of the operator $\Delta_{\partial M}+V_0+(\frac{d-2}{2})^2$.
\end{theorem}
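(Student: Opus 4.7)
The plan is to produce, for each $p$ outside the interval \eqref{Aninterval}, an explicit test function $f \in L^p(M)$ whose image $\nabla \mathbf{P}^{-1/2} f$ fails to lie in $L^p(M)$; this reduces the theorem to a precise understanding of the Schwartz kernel of $\mathbf{P}^{-1/2}(z,z')$ in the two off-diagonal asymptotic regimes $r \gg r'$ and $r \ll r'$, where $r = 1/x$ is the radial coordinate at infinity and $r = r(z)$, $r' = r(z')$. Since $V_0 \not\equiv 0$, the lowest cross-sectional mode $\phi_0$, with eigenvalue $\mu_0^2$, will govern both regimes, and differentiation in $z$ contributes one extra factor of $r^{-1}$.

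I would first do the model computation on the exact cone $C(\partial \overline{M})$, with cone operator $H = \Delta + V_0/r^2$. Separating variables against the eigenfunctions $\phi_j$ of $\Delta_{\partial M} + V_0 + (d-2)^2/4$ and using the unitary equivalence $u \mapsto r^{(d-1)/2} u$ from $L^2((0,\infty), r^{d-1}dr)$ to $L^2((0,\infty), dr)$, the restriction of $H$ to the $\phi_j$-mode becomes the Bessel operator $L_{\mu_j} = -\partial_r^2 + (\mu_j^2 - 1/4)/r^2$. Via the Hankel transform, its kernel is
\begin{equation*}
L_{\mu_j}^{-1/2}(r, r') = \sqrt{rr'} \int_0^\infty J_{\mu_j}(kr)\, J_{\mu_j}(kr')\, dk,
\end{equation*}
and standard integral formulae for products of Bessel functions, together with the large-argument asymptotics of an associated Legendre function, yield $L_{\mu_j}^{-1/2}(r, r') \sim c_{\mu_j} r^{-\mu_j - 1/2}(r')^{\mu_j + 1/2}$ for $r \gg r'$. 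Conjugating back gives, in the $\phi_0$-mode on the exact cone,
\begin{equation*}
H^{-1/2}(r, y; r', y') \sim c\, \phi_0(y)\phi_0(y')\, r^{-d/2 - \mu_0} (r')^{-d/2 + \mu_0 + 1} \quad \text{for } r \gg r',
\end{equation*}
with the symmetric formula for $r' \gg r$; since $\mu_j \geq \mu_1 > \mu_0$ for $j \geq 1$, the higher modes are strictly subleading.

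The hard part will be transferring this asymptotic to the asymptotically conic manifold. I would build the low-energy resolvent $(\mathbf{P} + k^2)^{-1}$ using Melrose's scattering/$b$-calculus, realising its Schwartz kernel as a polyhomogeneous distribution on a blown-up double space of $\overline{M}$ adapted to the boundary at infinity, then use the representation $\mathbf{P}^{-1/2} = \frac{2}{\pi}\int_0^\infty (\mathbf{P} + k^2)^{-1}\, dk$ to extract the leading behaviour at the faces $r \to \infty$ and $r' \to \infty$. The hypothesis of no zero mode or zero resonance is exactly what controls the $k \downarrow 0$ limit and guarantees that the off-diagonal asymptotic of $\mathbf{P}^{-1/2}$ matches the exact-cone model displayed above.

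With the asymptotic in hand, the two failure ranges follow from direct test-function arguments. For the upper bound, set $f_R(r,y) = \chi_{[R,2R]}(r)\phi_0(y)$; then $\|f_R\|_p \sim R^{d/p}$, while for $z$ in a fixed compact region of $M$, using the $r(z) \ll r(z')$ asymptotic gives $|\nabla \mathbf{P}^{-1/2} f_R(z)| \sim c\, R^{d/2 - \mu_0}$, so that $\|\nabla \mathbf{P}^{-1/2} f_R\|_p / \|f_R\|_p \gtrsim R^{d/2 - \mu_0 - d/p}$; this blows up precisely when $p > d/(d/2 - \mu_0)$ and $\mu_0 < d/2$. For the lower bound, pick any $f$ with compact support and nonzero $\phi_0$-projection: the $r(z) \gg r(z')$ asymptotic then gives $\nabla \mathbf{P}^{-1/2} f(z) \sim c'\, \phi_0(y(z))\, r(z)^{-d/2 - \mu_0 - 1}$ at infinity, which fails to lie in $L^p(M)$ exactly when $p \leq d/(d/2 + \mu_0 + 1)$, i.e.\ when $p \leq d/\min(d/2+\mu_0+1, d)$ after intersecting with $p \geq 1$.
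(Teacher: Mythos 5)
This theorem is not proved in the present paper; it is cited directly as \cite[Theorem 1.5]{GH}, and the authors use it only as the necessary-condition half of the characterization (with Theorem~\ref{fmain} supplying sufficiency). Your proposal is therefore not competing with an in-paper argument but is reconstructing the argument of \cite{GH} itself, and as such it captures the right mechanism: the Bessel-function/Hankel-transform computation of $L_{\mu_j}^{-1/2}$ on the model cone is correct, the Weber--Schafheitlin/Legendre asymptotic does give $r^{-\mu_j-1/2}(r')^{\mu_j+1/2}$ for $r\gg r'$, and the conjugated exponents $r^{-d/2-\mu_0}(r')^{-d/2+\mu_0+1}$ match the boundary orders that this paper records in Theorem~\ref{Gproperties}.

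That said, there are three genuine gaps. (a) The interval \eqref{Aninterval} is open, so ``outside'' includes the endpoints. Your scaled family $f_R$ gives a ratio $\sim R^{d/2-\mu_0-d/p}$, which diverges only for $p$ \emph{strictly} above the upper threshold, and your fixed compactly supported $f$ catches only $p\leq d/(d/2+\mu_0+1)$, which in the regime $\mu_0>d/2-1$ is $<1$ and hence vacuous; the claimed $L^1$-unboundedness in that case is simply not established by the argument as written. (b) You never isolate what $V_0\not\equiv 0$ is doing. If $V_0\equiv 0$ then $\mu_0=d/2-1$ and $\phi_0$ is constant, so the leading $\phi_0$-mode term $r^{1-d/2}I_{d/2-1}(r)\phi_0(y)$ is $c+O(r^2)$, and $\nabla_z$ annihilates its leading order -- exactly the cancellation the present paper exploits to upgrade the threshold from $\mu_0$ to $\mu_1$ in its proof of Theorem~\ref{HQLR}. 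Your upper-threshold test-function argument therefore needs the check that under $V_0\not\equiv 0$ either $\mu_0\neq d/2-1$ or $\phi_0$ is nonconstant, so that no such cancellation occurs. (c) The transfer from the exact cone to a genuine asymptotically conic manifold -- the uniform polyhomogeneity of $(\mathbf{P}+k^2)^{-1}$ as $k\downarrow 0$ on the appropriate blown-up space, and the verification that the coefficient of $\phi_0(y')(r')^{-d/2-\mu_0}$ in the expansion of $\mathbf{P}^{-1/2}(z,z')$ has nonvanishing $z$-gradient -- constitutes the bulk of \cite{GH} and is exactly where the no-zero-mode/no-zero-resonance hypothesis enters. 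You rightly flag it as ``the hard part,'' but with it unresolved, the proposal is an outline of the \cite{GH} proof rather than a proof.
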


The counter-example used in \cite{GH} to show the unboundedness of the Riesz transform 
can be easily adapted to the context of 
metric cones, so a similar result also holds for metric cones. 
Therefore the task now is to find a sufficient condition for  boundedness. 
We will see 
that the sufficient condition involves the same interval (\ref{Aninterval}) as in Theorem \ref{Andrew},  
so this interval gives us a complete characterisation of the boundedness of $T$ with $V\not\equiv 0$. 
Our main result  is as follows.

\begin{theorem}\label{fmain}
Let $d\geq 3$, and $M$ be a metric cone with dimension $d$ and cross section $Y$. 
Let $V_0$ be a function on $Y$ that satisfies $\Delta_Y+V_0(y)+(\frac{d-2}{2})^2>0$.
The Riesz transform $T$ with the inverse square potential $V=\frac{V_0}{r^2}$ is bounded on $L^p(M)$ for $p$ in the interval
\begin{equation}\label{ccinterval}
\Bigg(\frac{d}{\min(1+\frac{d}{2}+\mu_0, d)}, \ \frac{d}{\max(\frac{d}{2}-\mu_0, 0)}\Bigg),
\end{equation}
where $\mu_0>0$ is the square root of the smallest eigenvalue of the operator $\Delta_Y+V_0(y)+(\frac{d-2}{2})^2$.

Moreover, for any $V\not\equiv 0$, the interval (\ref{ccinterval}) characterises the boundedness of $T$, 
ie $T$ is bounded on $L^p(M)$ if and only if $p$ is in the interval (\ref{ccinterval}).
\end{theorem}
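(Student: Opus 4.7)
The plan is to prove sufficiency; necessity for $V_0 \not\equiv 0$ follows from Theorem~\ref{Andrew} by adapting the counter-example of \cite{GH} to the cone, which depends only on the data $V_0$ at infinity. For sufficiency the starting point is the subordination identity
$$
H^{-1/2} = \frac{1}{\pi}\int_0^\infty \lambda^{-1/2}\,(H+\lambda)^{-1}\,d\lambda,
$$
combined with the dilation invariance of the homogeneous operator $H$: since $(H+\lambda)^{-1}$ is obtained from $(H+1)^{-1}$ by pull-back under $z \mapsto \sqrt{\lambda}\, z$, the whole analysis reduces to understanding a single resolvent kernel, the $\lambda$-integration becoming a scale integration after a change of variable.

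The key technical step is a complete polyhomogeneous description of the Schwartz kernel of $(H+1)^{-1}$ on an iterated blow-up of $M \times M$ that resolves simultaneously the cone tip on both factors, the diagonal, and the scattering behaviour at infinity. The boundary hypersurfaces to control are: the diagonal (where $(H+1)^{-1}$ is a classical pseudodifferential operator of order $-2$); the two cone-tip faces, where the kernel vanishes like $r^{\mu_0 - (d-2)/2}$ with leading angular profile the ground state $\phi_0$ of $\Delta_Y + V_0 + (d-2)^2/4$; a front face at $r = r' = 0$, on which the restriction is itself the conic Green function of $H$ and can be computed by Mellin transform in $r/r'$ together with spectral expansion on $Y$; and the spatial-infinity faces, which are controlled by scattering calculus and yield exponential decay thanks to the spectral parameter $+1$. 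At the cone-tip faces the indicial exponents are the pairs $\pm \mu_j - (d-2)/2$ attached to the eigenvalues $\mu_j^2$ of $\Delta_Y + V_0 + (d-2)^2/4$, and only the ground state $\mu_0$ controls the threshold behaviour.

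Given the kernel, applying $\nabla$ and integrating in $\lambda$ produces a kernel for $T$ that splits into a local Calder\'on--Zygmund part (bounded on $L^p$ for all $1 < p < \infty$) and off-diagonal, scale-separated pieces dominated by explicit Mellin convolutions on $(0,\infty)_r$ whose indicial weights are shifted by one from those of the resolvent because of the gradient. A Schur or Hardy inequality argument shows that these Mellin pieces are bounded on $L^p(M) = L^p((0,\infty) \times Y,\,r^{d-1}\,dr\,dy)$ precisely when $d/2 - \mu_0 < d/p < d/2 + 1 + \mu_0$, which together with $p > 1$ gives exactly \eqref{ccinterval}. Assembling all the pieces yields boundedness of $T$ for $p$ in the stated range.

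The principal obstacle is constructing the full polyhomogeneous expansion of $(H+1)^{-1}$ globally on the resolved double space, matching b-type behaviour at the cone tip to scattering-type behaviour at infinity across the corners; this is the step that demands the bulk of the microlocal machinery. Once this description is available, the extraction of the sharp $L^p$ range reduces to endpoint bounds for a family of one-dimensional Hardy operators indexed by the spherical eigenvalues on $Y$, a comparatively clean calculation that produces precisely the thresholds appearing in \eqref{ccinterval}.
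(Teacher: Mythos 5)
Your proposal is correct and takes essentially the same route as the paper: subordination plus homogeneity reduce everything to $(H+1)^{-1}$; the resolvent kernel is resolved on an iterated blow-up combining $b$-calculus at the cone tip with scattering calculus at infinity, with indicial exponents $\pm\mu_j - (d-2)/2$; and the Riesz kernel is split into a Calder\'on--Zygmund near-diagonal piece and off-diagonal pieces handled by one-dimensional Hardy/Mellin bounds on $L^p((0,\infty),r^{d-1}dr)$, yielding exactly the thresholds in \eqref{ccinterval}, with necessity from the counterexample of \cite{GH}. This is precisely the architecture of Sections~\ref{chapter5} and~\ref{chapter6} of the paper (including the splitting $T=T_1+T_2+T_3$ and Lemmas~\ref{lemmaupper}, \ref{lemmalower}).
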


\begin{remark} If we specialize to \emph{positive} potentials, i.e.\  $V\geq 0$ and $V\not\equiv 0$, then $\mu_0 > (d-2)/2$, and we see that 
the lower threshold for $L^p$ boundedness is $1$, 
and the upper threshold is always greater than $d$. 
On the other hand, for \emph{negative} potentials $V$, i.e.\  $V\leq 0$ and $V\not\equiv 0$, 
the lower threshold for the $L^p$ boundedness is always greater than $1$ and strictly less than $2$, while the upper threshold is strictly less than $d$ but strictly larger than $2$. 
\end{remark}

\begin{remark} Part of these results are implied by a recent paper of Assaad \cite{JA} dealing with more general classes of potentials on $\RR^d$ or on complete Riemannian manifolds; see the end of Section~\ref{literaturereview} for further discussion.
\end{remark}

An immediate application of Theorem \ref{fmain} is to show that 
the converse of the second part of \cite[Theorem 1.5]{GH}, i.e.\  Theorem~\ref{Andrew}, is also true. 
As noted in \cite[Remark 1.7]{GH}, Theorem \ref{fmain} is exactly the missing ingredient. 
Therefore we have the following result. 

\begin{theorem} Let $(M^\circ, g)$,  $\mathbf{P}$ and $\mu_0$ be as in Theorem~\ref{Andrew}. Then the 
Riesz transform $\nabla P^{-1/2}$ is bounded on $L^p(M)$ \textbf{if and only if} $p$ is in the interval \eqref{Aninterval}. 
\end{theorem}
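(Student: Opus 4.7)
The plan is to combine the necessary condition of Theorem \ref{Andrew} with the new sufficient condition of Theorem \ref{fmain}, following the reduction outlined in Remark~1.7 of \cite{GH}.

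The necessity direction is immediate from Theorem \ref{Andrew}: the counterexample constructed in \cite{GH} --- testing $\mathbf{P}^{-1/2}$ on a mode supported near infinity that is built from the lowest eigenfunction of $\Delta_{\partial M} + V_0 + ((d-2)/2)^2$ --- detects the leading conic behaviour of the resolvent and shows unboundedness on $L^p$ whenever $p$ lies outside \eqref{Aninterval}, independent of the finer features of the asymptotically conic metric.

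For sufficiency I would compare $\nabla \mathbf{P}^{-1/2}$ on $M$ with the analogous operator on the model exact cone over $(\partial \overline{M}, h(0))$ equipped with $H = \Delta + V_0/r^2$. Using a smooth cutoff $\chi$ supported near the boundary at infinity, one decomposes $\nabla \mathbf{P}^{-1/2}$ into a piece $\chi (\nabla \mathbf{P}^{-1/2}) \chi$ concentrated at infinity and a remainder that carries a factor of $1-\chi$ on at least one side. The remainder is compactly supported in at least one variable, so local elliptic regularity and Calder\'on--Zygmund theory yield boundedness on every $L^p$, $1 < p < \infty$. The term at infinity is compared, via the resolvent parametrix built in \cite{GH} using the scattering--$b$ calculus, to the corresponding operator $\chi (\nabla H^{-1/2}) \chi$ on the exact cone; the difference lies in an ideal of operators whose kernels decay strictly faster at infinity and are therefore bounded on a strictly larger range of $p$. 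The sharp $L^p$-range of $\nabla \mathbf{P}^{-1/2}$ is consequently dictated by that of the cone Riesz transform, whose $L^p$-boundedness for $p$ in the interval \eqref{ccinterval} --- identical to \eqref{Aninterval} --- is exactly Theorem \ref{fmain}.

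The main potential obstacle is checking that the parametrix error terms really do improve the $L^p$ range rather than degrade it, so that the sharp threshold is determined by the cone model alone. However, this is a structural feature of the scattering--$b$ calculus employed in \cite{GH}, and is precisely why the authors of \cite{GH} were able to isolate the cone boundedness as the single missing ingredient. Once Theorem \ref{fmain} is in hand, the asymptotically conic result follows by direct application of the framework of \cite{GH}, with no further analytical input required here.
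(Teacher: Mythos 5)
Your proposal is correct and takes essentially the same route as the paper: the paper offers no detailed proof of this theorem, merely the observation that necessity is Theorem~\ref{Andrew} and that, as recorded in \cite[Remark 1.7]{GH}, Theorem~\ref{fmain} supplies precisely the missing cone estimate that makes the sufficiency argument from \cite{GH} go through. Your expository sketch of the \cite{GH} reduction (decomposition near and away from infinity, comparison to the exact cone model via the $b$--scattering parametrix, faster decay of the error terms) is a reasonable summary of that framework, but it is not carried out in this paper either --- both you and the authors are ultimately deferring the details to \cite{GH}.
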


A special case of Theorem \ref{fmain} is the following result on the Riesz transforms 
with constant non-zero $V_0$, in which the boundedness interval is written in terms of the constant. 

\begin{corollary}\label{vequalcintro}
Let $M$ be a metric cone with dimension $d \geq 3$ and cross section $Y$. 
The Riesz transform $T=\nabla(\Delta+\frac{c}{r^2})^{-\frac{1}{2}}$, 
where $c>-(\frac{d-2}{2})^2$ and $c\ne 0$, is bounded on $L^p(M)$ 
if and only if $p$ is in the interval
\begin{equation}\label{vequalcinin}
\Bigg(\frac{2d}{\min(d+2+\sqrt{(d-2)^2+4c}, 2d)}, \ \frac{2d}{\max(d-\sqrt{(d-2)^2+4c}, 0)}\Bigg).
\end{equation}
\end{corollary}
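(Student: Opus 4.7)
The plan is to deduce Corollary~\ref{vequalcintro} directly from Theorem~\ref{fmain} by specializing to the constant potential $V_0 \equiv c$. The only nontrivial work is identifying the quantity $\mu_0$ in this setting and then simplifying the interval \eqref{ccinterval}.

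First I would verify that the hypotheses of Theorem~\ref{fmain} are satisfied. The operator in question is $\Delta_Y + c + \big(\tfrac{d-2}{2}\big)^2$ on $L^2(Y)$. Since $Y$ is a closed Riemannian manifold, the lowest eigenvalue of $\Delta_Y$ is $0$, attained by the constant eigenfunctions. Consequently the lowest eigenvalue of $\Delta_Y + c + \big(\tfrac{d-2}{2}\big)^2$ equals $c + \big(\tfrac{d-2}{2}\big)^2$, and condition \eqref{V0condition} reduces exactly to the hypothesis $c > -\big(\tfrac{d-2}{2}\big)^2$. In particular,
\begin{equation*}
\mu_0 \;=\; \sqrt{c + \Big(\tfrac{d-2}{2}\Big)^{\!2}} \;=\; \tfrac{1}{2}\sqrt{(d-2)^2 + 4c}.
\end{equation*}
Since $c \neq 0$, the potential $V = c/r^2$ is not identically zero, so the ``if and only if'' part of Theorem~\ref{fmain} applies.

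Next I would substitute this value of $\mu_0$ into the interval \eqref{ccinterval}. For the upper endpoint,
\begin{equation*}
\frac{d}{\max\!\big(\tfrac{d}{2}-\mu_0,\,0\big)} \;=\; \frac{d}{\max\!\big(\tfrac{1}{2}(d-\sqrt{(d-2)^2+4c}),\,0\big)} \;=\; \frac{2d}{\max\!\big(d-\sqrt{(d-2)^2+4c},\,0\big)},
\end{equation*}
and analogously for the lower endpoint,
\begin{equation*}
\frac{d}{\min\!\big(1+\tfrac{d}{2}+\mu_0,\,d\big)} \;=\; \frac{2d}{\min\!\big(d+2+\sqrt{(d-2)^2+4c},\,2d\big)}.
\end{equation*}
These two expressions together reproduce the interval \eqref{vequalcinin}.

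There is essentially no obstacle here; the corollary is a bookkeeping exercise once Theorem~\ref{fmain} is in hand. The only point worth double-checking is that the condition $V \not\equiv 0$ (needed to invoke the necessity direction) is equivalent to $c \neq 0$, which is immediate. Hence the boundedness of $T = \nabla(\Delta + c/r^2)^{-1/2}$ on $L^p(M)$ holds precisely for $p$ in the stated interval.
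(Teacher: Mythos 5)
Your proof is correct and is essentially the argument the paper has in mind — the corollary is presented there as an immediate specialization of Theorem~\ref{fmain} to $V_0\equiv c$, and you have supplied the routine bookkeeping (identifying $\mu_0 = \tfrac{1}{2}\sqrt{(d-2)^2+4c}$ via the fact that the bottom of the spectrum of $\Delta_Y$ on a closed manifold is $0$, and rewriting the endpoints of \eqref{ccinterval}) that the paper leaves implicit.
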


\subsection{Strategy of the proof}

Using functional calculus, we get the following expression,
\begin{equation}
T=\frac{2}{\pi}\int_0^\infty \nabla( H+\lambda^2)^{-1}d\lambda.
\label{fc}\end{equation}
Because of the homogeneity of $H$, we obtain the resolvent kernel for $(H + \lambda^2)^{-1}$ from $(H + 1)^{-1}$ by scaling the variables. So it suffices to analyze $P := (H+1)^{-1}$. We do this on a compactified and blown up space, which is designed so that the asymptotics of its kernel in different regimes can be understood. Let us use $y$ as a local coordinate on the cross section $Y$. We particularly want to  distinguish the diagonal behaviour of the kernel $P^{-1}(r, y, r', y')$, from the behaviour as $r$ or $r'$ tend to zero or infinity. If we consider the kernel as living on $(Y \times [0, \infty])^2$,  as in Figure~\ref{domain}, then this has the defect that the diagonal meets the boundary hypersurfaces $\{ r = 0 \}$, $\{ r' = 0 \}$, $\{ r = \infty \}$ and  $\{ r' = \infty \}$, making the different asymptotic behaviours difficult to distinguish. To remedy this we perform blowups, as in \cite{GH}. As noted in that paper, the operator $r P r$ is elliptic as a $b$-differential operator near $r = 0$, that is, an elliptic combination of the `$b$-vector fields' $r \partial_r$ and $\partial_{y_i}$. On the other hand,  as $r \to \infty$, $P$ is an elliptic \emph{scattering} differential operator, which is to say that it has an expression that looks like the Euclidean Laplacian in polar coordinates as $r \to \infty$, being an elliptic combination of $\partial_r$ and $r^{-1} \partial_{y_i}$. Correspondingly we perform the $b$-blowup (used to define the $b$-calculus --- see Section~\ref{sec:bsc}) for small $r$, that is, blow up the corner $r = r' = 0$, while for large $r$ we perform two blowups (used to define the scattering calculus), namely we first blow up the corner $r = r' = \infty$, followed by the boundary of the lifted diagonal at $r = \infty$, obtaining the space illustrated in Figure~\ref{domain}. Now the diagonal is separated from the boundary hypersurfaces in Figure~\ref{domain} and on this blown-up space,  we can more easily construct the kernel of $P^{-1}$ and describe the different types of asymptotics.

\begin{figure}[h]
\centering
\includegraphics[scale=0.35]{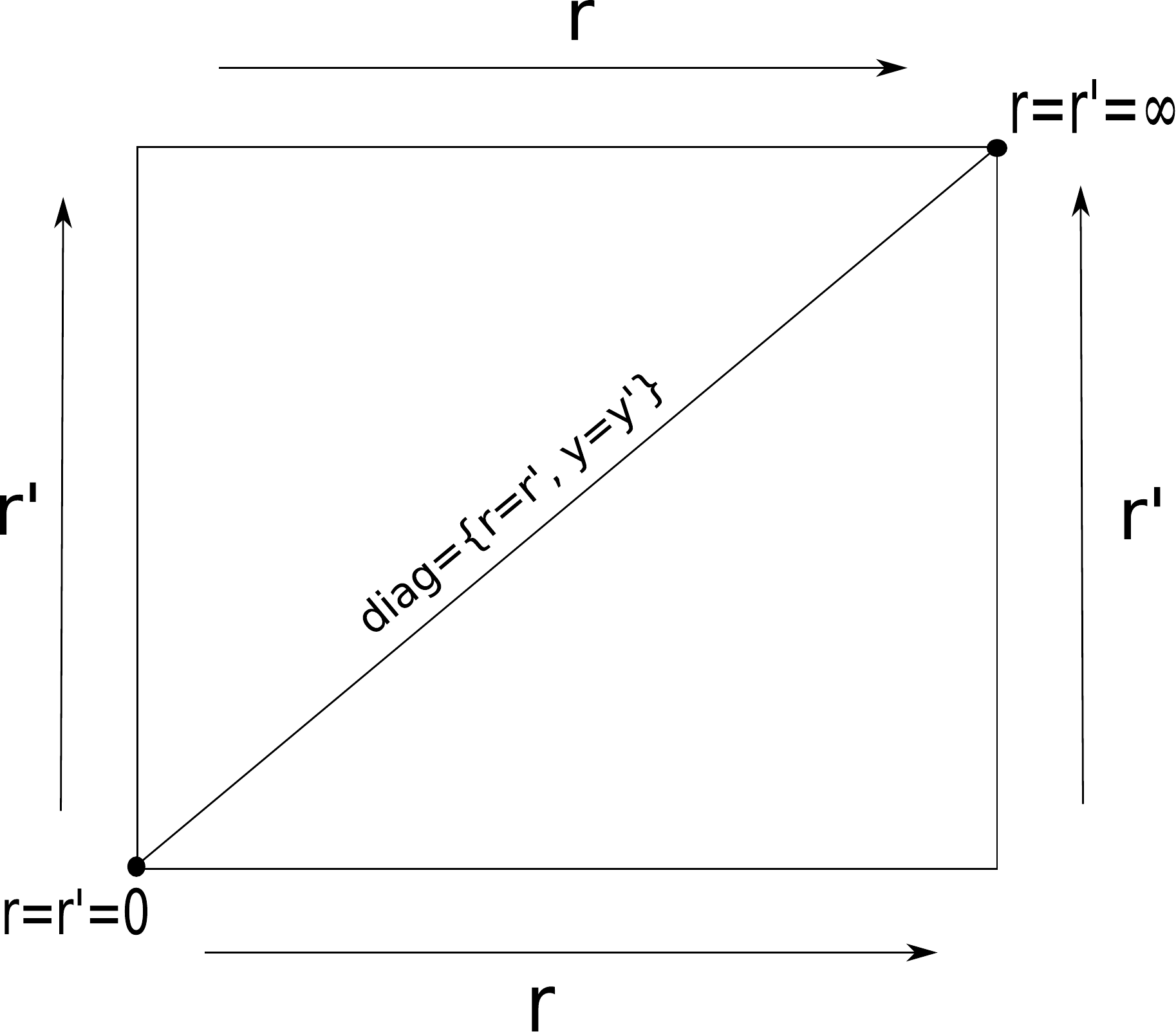} \qquad  \qquad 
\includegraphics[scale=0.30]{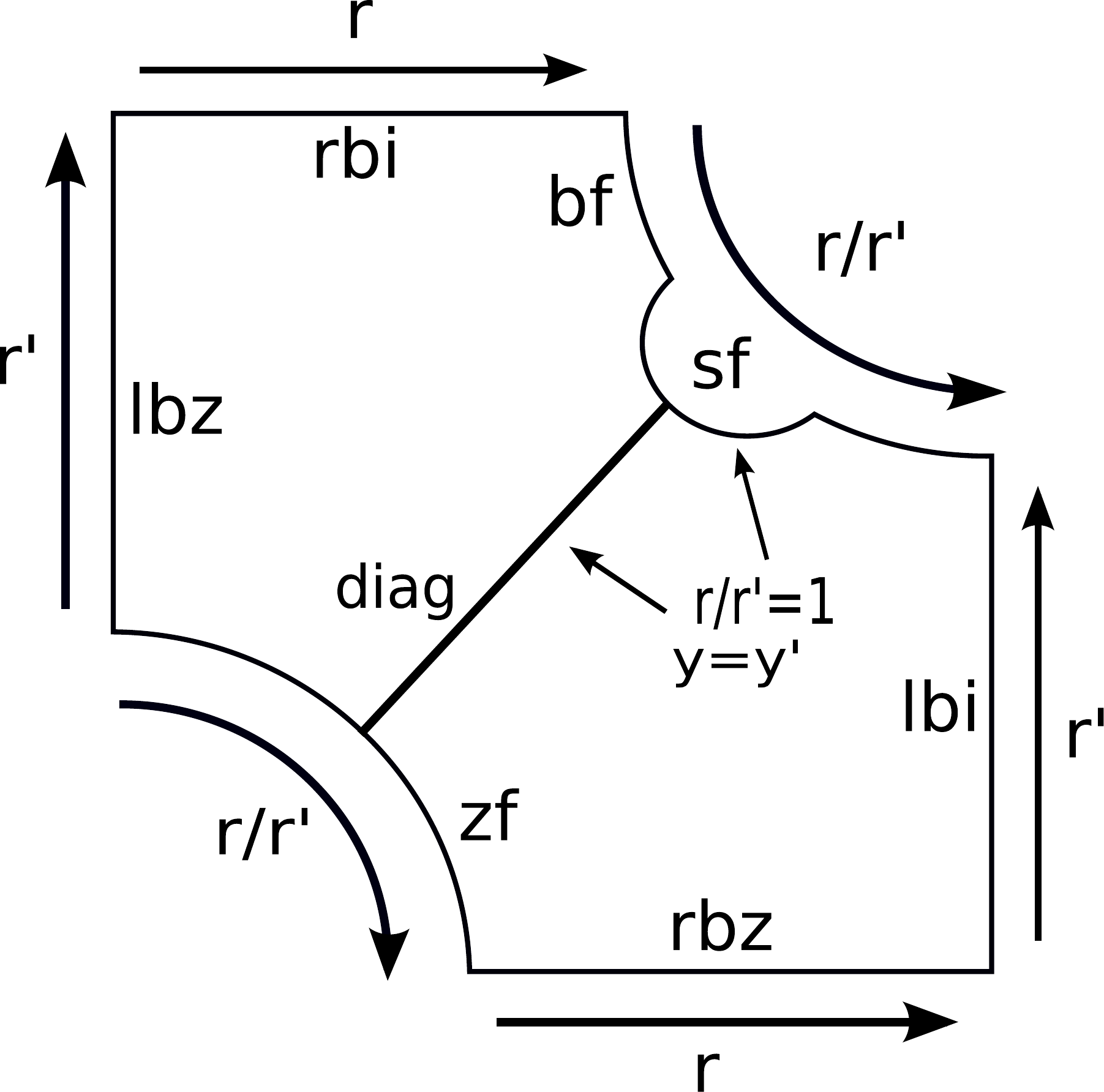}
\caption{The space $(Y \times [0, \infty])^2$, left, and the blown-up double space after three blowups, right.}
%\includegraphics[scale=0.35]{scattering.pdf}
%\caption{The domain of the kernel of $P$}
\label{domain}
\end{figure}

Because the kernel behaves differently in different parts of the blown-up space, and 
especially because we use different calculi near the two hypersurfaces $\zf$ and $\sf$, 
we break the blown-up space into different regions, and construct 
the resolvent kernel in each region separately using different tools and techniques.
In the end we patch up the constructions in these different regions to obtain the overall resolvent kernel. 
This construction of the resolvent kernel of $H$, ie the kernel of $P^{-1}$, is done in Section~\ref{chapter5}. 

In Section~\ref{chapter6}, equipped with the knowledge on the behaviours of the kernel of $P^{-1}$ 
at different parts of the blown-up space, 
we determine the 
boundedness properties of the Riesz transform $T$. 
Using a smooth partition of unity on the blown up space, we perform the integral \eqref{fc} and then break the kernel of $T$ up into a near-diagonal part and an off-diagonal part. 
The near-diagonal part is a Calder\'on-Zygmund kernel and is bounded on $L^p$ for all $p \in (1, \infty)$, while the off-diagonal part is bounded on typically a smaller range of $p$ determined by the leading asymptotic behaviour at the boundary hypersurfaces marked `$\lbz$' or `$\rbz$' in Figure~\ref{domain}.

\subsection{Relation to previous work}\label{literaturereview}

Cones have been studied since the 19th century, particularly the problem of wave diffraction
from a cone point which is important in applied mathematics, 
for example in \cite{ASO} by A. Sommerfeld. 
Other notable early papers include \cite{FGF} and \cite{FGF2} by F. G. Friedlander and \cite{BKeller} by A. Blank and J. B. Keller.
The Laplacians defined on cones were studied by J. Cheeger and M. Taylor in \cite{CT} and \cite{CT2}. Many papers have been written about spaces with cone-like singularities. For example, 
%A study on the self-adjoint extensions of the Laplacians on cones\ah{check} can be found in \cite{AGHH} by S. Albeverio, F. Gesztesy, R. H{\o}egh-Krohn and H. Holden.
the Laplacian and heat kernel on compact Riemannian manifolds with cone-like singularities 
has been studied in \cite{JC} by J. Cheeger and in \cite{EM} by E. Mooers,  in \cite{BS}, J. Br\"uning and R. Seeley studied the Laplcian on manifolds with an asymptotically conic singularity, and in \cite{MW} R. B. Melrose and J. Wunsch study the wave equation and diffraction on spaces with asymptotically conic singularities. 

The classical case of the Riesz transform on the Euclidean space $\mathbb{R}^d$ goes back to the 1920s, 
and the case of one dimension (the Hilbert transform) was studied by M. Riesz in \cite{MR}.
The paper \cite{RSS} by R. S. Strichartz is the first paper that studies the Riesz transform 
on a complete Riemannian manifold. 
In \cite{CD} T. Coulhon and X. T. Duong proved 
that the Riesz transform on a complete Riemannian manifold, 
satisfying the doubling condition and the diagonal bound on the heat kernel,
is of weak type $(1,1)$, and hence is bounded on $L^p$ for $1<p\leq 2$. Since then, there have been many studies of the Riesz transform, of which we mention just a few: studies of the Riesz transform on complete Riemannian manifolds include \cite{CD03}, \cite{ACDH}, \cite{AC}, \cite{CND}; on Lie groups include \cite{GA92}, \cite{GA02}, \cite{ERS}, \cite{LSC}; on second order elliptic operators \cite{BK}, \cite{ERS01}. 
\begin{comment}
%In the same paper they also showed that the Riesz transform defined on the connected sum of two copies of $\mathbb{R}^d$ is unbounded on $L^p$ for $p>d$. 
The boundedness of the Riesz transform defined 
on the connected sum of a finite number of $\mathbb{R}^d$ 
on $L^p$ for $1<p<d$ was shown by  
by G. Carron, T. Coulhon and A. Hassell
in \cite{CCH}.
Riesz transforms on connected sums were further studied by G. Carron in \cite{GC2}.
In \cite{HS}, A. Hassell and A. Sikora studied the boundedness of 
the Riesz transform where the Laplacian is defined on $\mathbb{R}$ or $\mathbb{R}_+$, 
with respect to the measure $|r|^{d-1}dr$ where $dr$ is Lebesgue measure and
% $d>1$ can be any real number.
 \end{comment}

Many papers have been written on Schr\"odinger operators with an inverse square potential. 
We only mention a few of the most relevant ones here. 
In \cite{XPW}, X. P. Wang studied the perturbations of such operators.
In \cite{GC}, G. Carron studied Schr\"odinger operators with potentials that are homogeneous of degree $-2$ near infinity.
In \cite{BPSTZ} by N. Burq, F. Planchon, J. G. Stalker and A. S. Tahvildar-Zadeh, 
the authors generalise the corresponding standard Strichartz estimates 
of the Schr\"odinger equation and the wave equation to the case
 in which an additional inverse square potential is present.
In \cite{HS} the first author and A. Sikora investigated one-dimensional Riesz transforms, including with inverse square potentials, with respect to measures of the form $r^{d-1} dr$, thus mimicking the measure on a $d$-dimensional cone. 

Now we turn to past results on 
the boundedness of the Riesz transform $T$ with a potential $V$ on metric cones. We have already mentioned the result (Theorem \ref{HQLR}) of H.-Q. Li for $V \equiv 0$, and the work \cite{GH} of 
C. Guillarmou and the first author on asymptotically conic manifolds. The method from \cite{GH} was based in part on the paper \cite{CCH}.  In \cite{GH2} the two authors performed a similar analysis  
but allowed zero modes and zero resonances. 
In \cite{ABA}, P. Auscher and B. Ben Ali obtained a result on $\mathbb{R}^d$, stated in Theorem \ref{ABAR}, which involves the reverse H\"older condition.
It is an improvement of the earlier results by Z.W. Shen in \cite{ZWS}. 

\begin{theorem}(\cite[Theorem 1.1]{ABA})\label{ABAR}
Let $1< q\leq \infty$. If $V$ is in the reverse H\"older class $B_q$ then for some $\varepsilon>0$ depending only on $V$
the Riesz transform with potential $V$ is bounded on $L^p(\mathbb{R}^d)$ for $1< p< q+\varepsilon$. 
\end{theorem}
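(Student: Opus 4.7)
The plan is to follow the strategy of Shen \cite{ZWS} and its refinement by Auscher--Ben Ali, which combines Shen's critical radius function with a generalised Calder\'on--Zygmund theorem (in the spirit of Blunck--Kunstmann) and the self-improving property of reverse H\"older weights. The starting point is Shen's auxiliary function $m(x, V)$, implicitly defined by $r^2 |B(x,r)|^{-1} \int_{B(x,r)} V\,dy = 1$ when $r = 1/m(x,V)$; the $B_q$ hypothesis ensures this is well-defined and Lipschitz-type regular, so that the critical balls $B(x, 1/m(x,V))$ provide a workable geometric covering of $\RR^d$ adapted to $V$.

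First I would establish off-diagonal Gaussian-type heat kernel estimates for $H = -\Delta + V$, supplemented by polynomial decay in $\sqrt{t}\,m(x,V)$, using the Feynman--Kac formula and Fefferman--Phong type bounds. Via the subordination identity $H^{-1/2} = \pi^{-1/2} \int_0^\infty e^{-tH} t^{-1/2}\,dt$, these translate into kernel estimates for $\nabla H^{-1/2}$, which I would then split into a \emph{local} part, supported within a fixed multiple of a critical ball, and a rapidly decaying \emph{global} part. The local part is amenable to classical Calder\'on--Zygmund theory after comparison with the free Riesz transform, giving boundedness on $L^p$ for all $1 < p < \infty$. For the global part, the crucial intermediate inequality $\|V^{1/2} f\|_p \le C\|H^{1/2} f\|_p$ is obtained by feeding the reverse H\"older inequality $(|B|^{-1} \int_B V^q)^{1/q} \le C |B|^{-1} \int_B V$ into bounds for $V^{1/2} e^{-tH}$, and then assembled through Blunck--Kunstmann's variant of the $T(1)$ theorem that does not require pointwise kernel regularity. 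This yields boundedness in the range $1 < p < q$.

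The main obstacle, and the source of the extra $\varepsilon > 0$, is the self-improving property of reverse H\"older classes: $V \in B_q$ implies $V \in B_{q+\varepsilon(V)}$ for some $\varepsilon$ depending only on the $B_q$ constant of $V$. This follows from a Gehring-type lemma proved via a stopping-time/Calder\'on--Zygmund argument on level sets. Once that is in hand, the preceding machinery applies with $q$ replaced by $q+\varepsilon$, yielding the stated range $1 < p < q + \varepsilon$. I expect the most delicate step to be the global estimate on $\nabla H^{-1/2}$: one must trade Gaussian decay of the heat kernel against integrability of $V$ and keep track of how the $B_q$ constant controls the resulting operator norm uniformly across dyadic scales, since a naive application loses the endpoint $p = q$ and would produce no $\varepsilon$ at all.
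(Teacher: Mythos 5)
This statement is quoted verbatim from Auscher--Ben Ali \cite[Theorem 1.1]{ABA}; the paper does not prove it, and only cites it as background, so there is no ``paper's own proof'' against which to compare your attempt. Your sketch is, however, a fair description of the circle of ideas in the literature: the Shen auxiliary function $m(x,V)$ and Fefferman--Phong bounds go back to \cite{ZWS}, the self-improvement $B_q \Rightarrow B_{q+\varepsilon}$ via Gehring's lemma is indeed the source of the $\varepsilon$, and the extension beyond Shen's range $q \geq d/2$ to general $q>1$ is precisely what Auscher--Ben Ali achieve with a Calder\'on--Zygmund theory that does not demand pointwise kernel regularity. One caveat: your proposal leans heavily on Feynman--Kac heat-kernel estimates, whereas Auscher--Ben Ali's actual argument is closer to an elliptic/good-$\lambda$ approach: the central step is the maximal inequality $\|\Delta u\|_p + \|Vu\|_p \lesssim \|Hu\|_p$ for $1<p\le q$, obtained from improved Fefferman--Phong inequalities at the $L^q$ level, followed by a generalised CZ lemma applied to $\nabla H^{-1/2}$ and $V^{1/2}H^{-1/2}$ on critical balls. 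So your blueprint is plausible but not identical in mechanism to the cited proof; for the purposes of the present paper neither is needed, since Theorem~\ref{ABAR} plays no role in the proofs here and is mentioned only for comparison (and, as noted in Section~\ref{literaturereview}, the $B_q$ hypothesis excludes the negative inverse-square potentials the paper actually treats).
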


The reverse H\"older condition $V\in B_q$, implies that $V > 0$ almost everywhere and $V\in L_{loc}^q(\mathbb{R}^d)$. A positive inverse square potential is in $B_q$ if and only if $q < d/2$. So this theorem gives boundedness for $1 < p < d/2$, which is smaller than the range obtained in Theorem~\ref{fmain} for positive inverse square potentials (of course this is a very small subclass of $B_q$-potentials). 

Very recently, Assaad and Assaad-Ouhabaz have proved results for Riesz transforms of Schr\"odinger operators which include some of our results. The following result is from \cite{JA}:

\begin{theorem}\label{assaadresult}
Let $M$ be a non-compact complete Riemannian manifold with dimension $d\geq 3$. 
Suppose that the function $V\leq 0$ satisfies $\Delta+(1+\varepsilon)V\geq 0$, 
the Sobolev inequality 
\begin{equation}\label{Sobineq}
||f||_{L^{\frac{2d}{d-2}}(M)}\lesssim ||\nabla f||_{L^2(M)},
\end{equation}
holds for all $f\in C_0^\infty(M)$, 
and that $M$ is of homogeneous type, ie for all $x\in M$ and $r>0$,
\[\mu\big(B(x, 2r)\big)\lesssim\mu\big(B(x, r)\big),\]
where $\mu$ is the measure on $M$. 
Then the Riesz transform $T=\nabla(\Delta+V)^{-\frac{1}{2}}$ is 
bounded on $L^p(M)$ for all $p$ in the interval
\begin{equation}\label{assaadinterval}
\Bigg(\frac{2d}{d+2+(d-2)\sqrt{\frac{\varepsilon}{\varepsilon+1}}}, \hspace{3mm}2\Bigg].
\end{equation}
\end{theorem}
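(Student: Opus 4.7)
The plan is to exploit the subcriticality hypothesis $\Delta + (1+\varepsilon)V \geq 0$ quantitatively. Since $V \leq 0$, this rearranges to the operator inequality $H := \Delta + V \geq \tfrac{\varepsilon}{1+\varepsilon}\Delta$ in the form sense, which is the source of the numerical factor $\sqrt{\varepsilon/(1+\varepsilon)}$ appearing in the endpoint. The overall strategy is to compare the semigroup and functional calculus of $H$ with those of $\Delta$, use standard $L^p$ theory for $\nabla \Delta^{-1/2}$ to offload the gradient, and then recover the sharp range via interpolation.

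First, I would establish Gaussian upper bounds for $e^{-tH}$. The Sobolev inequality \eqref{Sobineq} together with the doubling condition gives, via Nash--Moser iteration, an on-diagonal estimate $\|e^{-t\Delta}\|_{L^1 \to L^\infty} \lesssim t^{-d/2}$; the operator inequality above transfers this to $e^{-tH}$ by a Davies-type perturbation argument, with constants depending only on $\varepsilon$. A Davies--Gaffney argument then upgrades these to full Gaussian off-diagonal bounds for $e^{-tH}$ and for $\sqrt{t}\,\nabla e^{-tH}$ (the latter on $L^2$ only, via Caccioppoli).

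Second, I would decompose the Riesz transform by the factorisation
\[
\nabla H^{-1/2} = \bigl(\nabla \Delta^{-1/2}\bigr) \cdot \bigl(\Delta^{1/2} H^{-1/2}\bigr).
\]
The first factor is bounded on $L^p(M)$ for $1<p\le 2$ by the Coulhon--Duong theorem \cite{CD}, since $\Delta$ satisfies Gaussian bounds under the hypotheses. Hence it suffices to prove that the auxiliary operator $\Delta^{1/2} H^{-1/2}$ is bounded on $L^p$ for $p$ in the interval \eqref{assaadinterval}. On $L^2$ this is immediate from the operator inequality, which yields $\|\Delta^{1/2}H^{-1/2}\|_{L^2 \to L^2} \le \sqrt{(1+\varepsilon)/\varepsilon}$. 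For $p<2$ I would use the subordination formula $H^{-1/2} = \pi^{-1/2}\int_0^\infty t^{-1/2} e^{-tH}\,dt$ combined with the Gaussian bounds for $e^{-tH}$ and a Blunck--Kunstmann style off-diagonal $L^{p_0}$--$L^2$ estimate, where the endpoint $p_0 = 2d/\bigl(d+2+(d-2)\sqrt{\varepsilon/(1+\varepsilon)}\bigr)$ arises by matching the Sobolev exponent $d/(d-2)$ against the Hardy-type factor coming from $\varepsilon$. Marcinkiewicz interpolation between this endpoint and $L^2$ then delivers \eqref{assaadinterval}.

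The main obstacle will be the sharp endpoint analysis, in particular extracting the exact exponent $(d-2)\sqrt{\varepsilon/(1+\varepsilon)}$ from the perturbation. This requires tracking constants carefully through the Davies perturbation of the heat semigroup and applying a weighted Hardy--Sobolev inequality at the correct power so that the Sobolev scale $2d/(d-2)$ and the Hardy exponent combine to give the stated $p_0$. A secondary difficulty is ensuring that the off-diagonal estimates used for the Blunck--Kunstmann criterion survive under the potential perturbation; this should follow from the finite-propagation speed for the wave equation associated to $H$, which is preserved under any real potential perturbation.
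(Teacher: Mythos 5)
Note first that the paper does not prove this theorem; it is quoted from Assaad \cite{JA} purely for comparison with the cone result, so there is no internal argument to match your sketch against.

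The genuine gap is in your first step. You claim that the Sobolev inequality together with the form bound $H\geq\frac{\varepsilon}{1+\varepsilon}\Delta$ gives Gaussian upper bounds for $e^{-tH}$, in particular $\|e^{-tH}\|_{L^1\to L^\infty}\lesssim t^{-d/2}$. Nash's extrapolation from a Sobolev inequality to ultracontractivity requires the semigroup to be a contraction on $L^\infty$, but for $V\leq 0$, $V\not\equiv 0$, the semigroup $e^{-tH}$ is not $L^\infty$-contractive (it satisfies $e^{-tH}\mathbf{1}>1$), so the iteration does not close. Concretely, for $H=\Delta-c|x|^{-2}$ on $\mathbb{R}^d$ with $0<c<\big(\frac{d-2}{2}\big)^2$ --- exactly the case in which the paper compares \eqref{assaadinterval} with Corollary~\ref{vequalcintro} --- the heat kernel of $H$ grows like $(1+\sqrt{t}/|x|)^{\sigma}(1+\sqrt{t}/|y|)^{\sigma}$ near the origin, with $\sigma=\frac{d-2}{2}-\sqrt{(\frac{d-2}{2})^2-c}>0$, so $\|e^{-tH}\|_{L^1\to L^\infty}=\infty$ for every $t>0$. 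If Gaussian bounds held, Coulhon--Duong applied directly to $H$ would already bound $\nabla H^{-1/2}$ on all of $(1,2]$, contradicting the sharpness of the lower endpoint $p_0>1$ which the paper establishes for this class of potentials. The actual mechanism behind the $\varepsilon$-dependent range is that $e^{-tH}$ is uniformly bounded on $L^p$ only for $p$ in an $\varepsilon$-dependent subinterval of $(1,\infty)$, and it is that restricted semigroup range, together with $L^p$--$L^2$ off-diagonal bounds for $\sqrt{t}\,\nabla e^{-tH}$ and the Blunck--Kunstmann criterion, that produces \eqref{assaadinterval}. As a secondary point, your factorisation through $\Delta^{1/2}H^{-1/2}$ does not make the $p<2$ analysis easier: $\Delta^{1/2}$ is nonlocal, so $\Delta^{1/2}e^{-tH}$ does not inherit the off-diagonal (Davies--Gaffney) estimates on which the Blunck--Kunstmann method rests; the standard route, and Assaad's, is to attack $\nabla H^{-1/2}$ directly.
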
 

This result can be directly compared with ours in the case of Schr\"odinger operators of the form $(\Delta+\frac{c}{r^2})$, 
where the constant $c$ satisfies $-(\frac{d-2}{2})^2<c<0$. In that case 
the lower threshold in (\ref{vequalcinin}) given by Corollary  \ref{vequalcintro} 
is the same as the lower threshold in (\ref{assaadinterval}) given by J. Assaad's result.
Also in \cite{JA} it is shown that the Riesz transform for Schr\"odinger operators with potentials in $L^{d/2}$ on a $d$-dimensional Riemannian manifold obeying the Sobolev inequality 
\eqref{Sobineq}
are bounded on $(1,d)$ provided that this is true for the Riesz transform with zero potential. Note that this case just fails to cover inverse square potentials on cones, which are in $L^{d/2, \infty}$. 
Further results on signed potentials are proved by 
 J. Assaad and E. Ouhabaz in \cite{AO}. 

%%%%%%%%%%%%%%%%%%%%%%%%%%%%%%%
%%%%%%%%%%%%%%%%%%%%%%%%%%%%%%%
%%%%%%%%%%%%%%%%%%%%%%%%%%%%%%%%

\section{Review of the $b$-calculus and the scattering calculus}
\label{sec:bsc}

In this section we briefly recall the key elements of the $b$-calculus and the scattering calculus that we require in Section~\ref{chapter5}. For more details, see \cite{RM93} or \cite{GD} for the $b$-calculus, and \cite{RM94, RM95}  for the scattering calculus. 

\subsection{$b$-calculus}\label{subsec:b}
 Let $X$ be a manifold with boundary and with boundary defining function $x$ (that is, $\partial X = \{ x = 0 \}$ and $dx \neq 0$ at $\partial X$). 
The $b$-calculus is a ``microlocalization'' of the set of $b$-differential operators, namely those generated over $C^\infty(X)$ by vector fields tangent to the boundary of $X$; near $\partial X$ such vector fields are a linear combination of vector fields $x \partial_x$ and $\partial_{y_i}$, in terms of a local coordinate system $(x, y_1, \dots, y_{d-1})$ with $(y_1, \dots, y_{d-1})$ restricting to a local coordinate system on $\partial X$. 

It is convenient to regard such operators as acting on $b$-half densities, that is, multiples of a half-density taking the form $|dx/x dy_1 \dots dy_{d-1} |^{1/2}$ near the boundary. Correspondingly, the Schwartz kernels of such operators may be written as a distribution tensored with a $b$-half density in each of the left and right variables.

To define the $b$-calculus, we first blow up\footnote{Here and below we use `blow up' to mean real blow up; as a set, the manifold $[X;S]$ obtained by blowing up $X$ at the submanifold $S$ is obtained  by removing $S$ and replacing it with its inward pointing spherical normal bundle. It is endowed with a differentiable structure that makes polar coordinates around $S$ smooth functions on the blown up space.}  $X^2$ along $(\partial X)^2$ 
to obtain the blown-up manifold $X_b^2=[X^2;(\partial X)^2]$, called the $b$-double space. This produces a manifold with corners which has three boundary hypersurfaces: one defined by $x/x' = 0$ (here and below we use the convention that unprimed variables on the double space are coordinates on the left copy, and primed variables are coordinates on the right copy), one defined by $x'/x = 0$ and one defined by $x+x' = 0$. These are usually denoted $\lb, \rb$ and $\ff$ respectively, but in accordance with our notational conventions for the space in Figure~\ref{domain}, we will call them $\lbz$, $\rbz$ and $\zf$ here (the `z' stands for `zero' here and refers to the fact that the $b$-blowup takes place at $r=r' = 0$). 
%See the middle diagram in Figure~\ref{dse123}. 

The \emph{small $b$-calculus} $\Psi_b^m(X)$, $m\in\mathbb{R}$, is defined as the set of $b$-half-density-valued 
distributions $u$ on $X_b^2$ satisfying 
\begin{enumerate}
\item[\rm (i)] $u$ is conormal of order $m$ with respect to diag$_b$, smoothly up to the
hypersurface $\zf$; 
\item[\rm (ii)] $u$ vanishes to infinite order at $\lbz$ and $\rbz$.
\end{enumerate}
Using the Schwartz kernel theorem, we interpret these as operators on (smooth functions) on $X$; the space $\Psi_b^0(X)$ extends to a bounded operator on $L^2$. 
We also define
\[\Psi_b^{-\infty}(X)=\bigcap_m\Psi_b^m(X);\]
such operators are simply smooth 
$b$-half-densities that vanish at $\lbz$ and $\rbz$.

The $b$-calculus is  closed under composition; see  \cite[Prop 5.20]{RM93} for the proof of the following result.

\begin{proposition}
If $X$ is a compact manifold with boundary then
\[\Psi_b^m(X)\circ\Psi_b^{m'}(X)\subset\Psi_b^{m+m'}(X),\]
where $m, m'\in\mathbb{R}$.
\end{proposition}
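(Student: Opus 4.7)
The plan is to follow Melrose's standard strategy for composition in the $b$-calculus, using a triple $b$-space together with pullback and pushforward under $b$-fibrations.

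First I would construct the \emph{triple $b$-space} $X^3_b$. Starting from $X^3$, blow up the triple codimension-three corner $(\partial X)^3$ first, and then blow up the three codimension-two corners $\{x=x'=0\}$, $\{x'=x''=0\}$, $\{x=x''=0\}$ (using the obvious notation for coordinates on the three factors). This produces a manifold with corners whose boundary hypersurfaces correspond naturally to the blown-up corners. The crucial property of $X^3_b$ is that each of the three projections $\pi_L, \pi_C, \pi_R: X^3_b \to X^2_b$, which forget the right, middle, and left factor respectively, lifts to a $b$-fibration. This is the reason for doing the triple corner blowup first; without it the projections would not be $b$-maps.

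Next, given $A \in \Psi_b^m(X)$ and $B \in \Psi_b^{m'}(X)$, I would form the product
\[
\pi_R^* A \cdot \pi_L^* B
\]
on $X^3_b$, interpreted as a $b$-half-density on $X^3_b$ in the appropriate way. One verifies that each pullback is conormal to the lifted partial diagonal (say $\diag_b^{R}$ and $\diag_b^{L}$) of the corresponding orders, smooth up to the interior face, and vanishing to infinite order at the faces that lift from $\lbz$ and $\rbz$ in each factor. Since the two lifted partial diagonals meet transversally in $X^3_b$ and both are transversal to all boundary faces except the lift of $\zf$, the product is conormal of combined order $m+m'$ along the triple diagonal (which is the intersection $\diag_b^L \cap \diag_b^R$), smooth up to the interior face, and vanishes to infinite order at all boundary hypersurfaces mapped into $\lbz$ or $\rbz$ under $\pi_C$.

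Finally I would apply Melrose's pushforward theorem for $b$-fibrations to $\pi_C$: pushforward along a $b$-fibration of a conormal distribution which vanishes to infinite order at all faces mapped to $\lbz, \rbz$ of $X^2_b$ produces a kernel in $\Psi_b^{m+m'}(X)$. The order along $\diagb$ in $X^2_b$ drops by the correct amount because $\pi_C$ has fibre dimension equal to $\dim X$, which matches the codimension difference between the triple diagonal in $X^3_b$ and $\diagb$ in $X^2_b$. Infinite-order vanishing at $\lbz, \rbz$ of $X^2_b$ is preserved because the preimages of these faces under $\pi_C$ are exactly the faces where the integrand already vanishes to infinite order.

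The hard part, and the heart of the argument, is checking that $X^3_b$ is set up so that $\pi_C$ really is a $b$-fibration and that the pushforward theorem applies — i.e.\ that no boundary face of $X^3_b$ is mapped into the interior of $X^2_b$ and that the integrand vanishes to infinite order on every face of $X^3_b$ whose image under $\pi_C$ touches $\lbz$ or $\rbz$. Once this bookkeeping of boundary faces is done, everything else is a routine application of the conormal product formula and the pushforward theorem; since the result is classical, I would cite \cite[Prop.~5.20]{RM93} for the technical details.
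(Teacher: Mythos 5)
Your sketch is correct and is exactly the standard triple $b$-space argument from Melrose; the paper does not prove this proposition itself but simply cites \cite[Prop.~5.20]{RM93}, which is the same reference you defer to for the technical details. Your bookkeeping (blowing up $(\partial X)^3$ before the pairwise corners so that the three projections lift to $b$-fibrations, forming $\pi_R^*A\cdot\pi_L^*B$, and pushing forward by $\pi_C$) matches that source.
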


Since our purpose is to invert elliptic $b$-differential operators, it's important to know about parametrix constructions 
under the small $b$-calculus. It is analogous to \cite[Theorem 18.1.24]{LH3}.

\begin{proposition}\label{invert1}
If $P$ is an elliptic partial differential operator of order $k$, 
then there exists an operator $G$ in the small $b$-calculus of order $-k$ such that 
\[\Id-PG\in\Psi_b^{-\infty}(X), \quad \Id-GP\in\Psi_b^{-\infty}(X),\]
and $G$ with this property is unique up to an element of $\Psi_b^{-\infty}$.
\end{proposition}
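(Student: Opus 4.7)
My plan is to adapt the classical elliptic parametrix construction to the $b$-calculus, in the same spirit as Hörmander's proof of \cite[Theorem 18.1.24]{LH3}. Three ingredients from the formalism of the small $b$-calculus are needed: (i) the $b$-principal symbol map $\sigma_b:\Psi_b^m(X)\to S^m({}^bT^*X)/S^{m-1}({}^bT^*X)$ is surjective with kernel $\Psi_b^{m-1}(X)$; (ii) the symbol is multiplicative under composition; and (iii) the small $b$-calculus is asymptotically complete, meaning any sequence $G_j\in\Psi_b^{-k-j}(X)$ has an asymptotic sum $G\in\Psi_b^{-k}(X)$ with $G-\sum_{j<N}G_j\in\Psi_b^{-k-N}(X)$ for every $N$. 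Ellipticity of the $b$-differential operator $P$ is used only to assert that $\sigma_b(P)$ is invertible on the complement of the zero section of ${}^bT^*X$.

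The construction itself is the standard symbolic iteration. First I would pick $G_0\in\Psi_b^{-k}(X)$ with $\sigma_b(G_0)=\sigma_b(P)^{-1}$, so $R_1:=\Id-PG_0\in\Psi_b^{-1}(X)$. Then, inductively assuming $R_N:=\Id-P\bigl(\sum_{j<N}G_j\bigr)\in\Psi_b^{-N}(X)$, I would pick $G_N\in\Psi_b^{-k-N}(X)$ with $\sigma_b(G_N)=\sigma_b(P)^{-1}\sigma_b(R_N)$, so that $R_{N+1}\in\Psi_b^{-N-1}(X)$. Taking $G\sim\sum_j G_j$ via asymptotic completeness yields a right parametrix with $\Id-PG\in\Psi_b^{-N}(X)$ for every $N$, hence in $\Psi_b^{-\infty}(X)$. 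A mirror argument, now quantising $\sigma_b(R_N)\sigma_b(P)^{-1}$ on the left, produces a left parametrix $G'$ with $\Id-G'P\in\Psi_b^{-\infty}(X)$.

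To identify the two parametrices and to obtain uniqueness, I would apply the standard associativity trick: since composition preserves $\Psi_b^{-\infty}(X)$ as a two-sided ideal,
\[
G' = G'(PG) + G'(\Id-PG) = (\Id - (\Id-G'P))G + G'(\Id-PG)
\]
differs from $G$ by an element of $\Psi_b^{-\infty}(X)$. The same computation with any other two-sided parametrix $\tilde G$ in place of $G$ gives $\tilde G - G\in \Psi_b^{-\infty}(X)$, yielding the asserted uniqueness.

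The main subtlety, and what makes this a genuine statement about the $b$-calculus rather than the ordinary pseudodifferential calculus in the interior, is the need for the symbolic construction to be valid uniformly up to the front face $\zf$ of the $b$-double space. This is exactly what the small $b$-calculus is designed for: $\sigma_b$ is a symbol on the $b$-cotangent bundle, which is a smooth non-degenerate bundle up to and including $\partial X$, and the residual ideal $\Psi_b^{-\infty}(X)$ is defined so as to allow smooth, non-vanishing behaviour at $\zf$, with infinite-order vanishing imposed only at $\lbz$ and $\rbz$. Consequently no normal-operator inversion at $\zf$ is required, and all the steps above go through from the closed-manifold case without modification.
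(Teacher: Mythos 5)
Your proposal is correct and reproduces the standard symbolic parametrix construction that the paper simply cites to \cite[Sec.\ 4.13]{RM93}: quantise $\sigma_b(P)^{-1}$, iterate to lower the order of the error, asymptotically sum, and identify left and right parametrices via the usual associativity computation, with the crucial $b$-specific observation that no invertibility of the indicial operator at $\zf$ is required because the residual ideal $\Psi_b^{-\infty}(X)$ tolerates smooth non-vanishing behaviour there. The remark that the $b$-cotangent bundle is a genuine smooth vector bundle up to $\partial X$, so the symbolic iteration proceeds exactly as on a closed manifold, is precisely the point, and the uniqueness argument is complete as written.
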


For the proof, see \cite[Sec. 4.13]{RM93}. This inversion property is not good enough for  Fredholm theory, as the error terms $\Id-PG, \Id-GP$ may not be  compact. 
To investigate when an element in the small $b$-calculus is compact, we introduce the indicial operator.

\begin{definition}\label{def:indicial}
Let $A \in \Psi_b^m(X)$ be a $b$-pseudodifferential operator. The indicial operator $\ib{}(A)$ is defined to be the restriction of the Schwartz kernel of $A$ to $\zf$. 
\end{definition}

The indicial operator $\ib{}(A)$ can be interpreted as a translation-invariant operator on the cylinder $\partial X \times \RR$. As such it is an algebra homomorphism: 
\[\ib{}(PA)=\ib{}(P)\ib{}(A).\]

The compactness of an operator is linked to its indicial operator. 

\begin{proposition}\label{bcomi0}%\cite[Ex. 4.40]{RM93}
Suppose that $X$ is a manifold with corners, and $A\in\Psi^m_b(X)$ with $m < 0$. 
Then $A$ is compact on $L^2(X)$ if and only if $\ib{}(A)=0$.
\end{proposition}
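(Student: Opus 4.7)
The plan is to prove the two implications separately.

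For the implication $\ib{}(A)\neq 0 \Rightarrow A$ is not compact, the approach is to construct a Weyl sequence exploiting the translation-invariant character of the indicial operator. Under the coordinate change $t=-\log x$, a collar neighborhood of $\partial X$ is identified with the half-cylinder $\partial X\times[0,\infty)_t$ on which the $b$-density $x^{-1}\,dx\,dy$ becomes the Lebesgue measure $dt\,dy$, and $\ib{}(A)$ acts naturally as a translation-invariant operator on the full cylinder $\partial X\times\mathbb{R}_t$. I would choose $\phi,\psi\in C_c^\infty(\partial X\times\mathbb{R})$ with $\langle \ib{}(A)\phi,\psi\rangle\neq 0$ and translate them by $+n$ in $t$, producing functions $\phi_n,\psi_n$ supported in arbitrarily small neighborhoods of $\partial X$; for $n$ large they live on $X$ after extension by zero. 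These sequences are bounded in $L^2(X)$ and converge weakly to zero since their supports escape every compact set. Combining the smoothness of the Schwartz kernel of $A$ up to $\zf$, the fact that this restriction is precisely $\ib{}(A)$, and the infinite-order vanishing at $\lbz,\rbz$ (which ensures the relevant pairing concentrates near the $b$-diagonal inside $\zf$), I obtain $\langle A\phi_n,\psi_n\rangle \to \langle \ib{}(A)\phi,\psi\rangle\neq 0$. Since a compact operator sends weakly null sequences to strongly null ones, this is a contradiction.

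For the converse $\ib{}(A)=0 \Rightarrow A$ is compact, the key observation is that vanishing of the indicial operator means the kernel of $A$ vanishes on $\zf$. Since the kernel is smooth up to $\zf$, it factors as $\tilde\rho\cdot K_1$, where $\tilde\rho$ is a defining function for $\zf$ on $X^2_b$ (such as $x+x'$) and $K_1$ is a conormal $b$-half-density kernel of operator order $m$. Translated into mapping properties, this produces boundary decay: $A$ extends to a bounded map $L^2(X)\to x^\delta L^2(X)$ for some $\delta>0$. Combining this with the regularity gain from $m<0$, namely that $A$ maps into the $b$-Sobolev space $H_b^{|m|}(X)$, one obtains a bounded factorization $A:L^2(X)\to x^\delta H_b^{|m|}(X)$. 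The inclusion $x^\delta H_b^{|m|}(X)\hookrightarrow L^2(X)$ is a weighted Rellich-type embedding: the $b$-structure near $\partial X$ is cylindrical, and the weight $x^\delta$ corresponds to exponential decay in the cylindrical variable, which together with the positive regularity order gives compactness. Thus $A$ factors through a compact embedding and is itself compact.

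The main obstacle will be making rigorous the boundary-decay step in the sufficient direction. Rather than tracking the half-density bookkeeping of the explicit factorization $K=\tilde\rho\cdot K_1$, a cleaner route is to verify $A:L^2\to x^\delta L^2$ directly via a Schur-type test on the kernel using the vanishing at $\zf$ together with the infinite-order vanishing at $\lbz,\rbz$. The weighted Rellich embedding can then be handled in the cylindrical model by Fourier decomposition in the $t$-variable: the weight $x^\delta\sim e^{-\delta t}$ supplies $\ell^2$-decay across the tower of Fourier modes, while the positive order of regularity handles the high-frequency behavior on each mode, yielding compactness in the end.
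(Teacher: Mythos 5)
The paper does not actually prove this proposition; it is stated as standard background from Melrose's $b$-calculus (cf.~\cite{RM93}), so there is no in-paper argument to compare against. Judged on its own merits, your two-direction outline is the standard one and is essentially sound: the Weyl-sequence argument via translates in the cylindrical model correctly exploits the translation invariance of $\ib{}(A)$, the smoothness of the kernel up to $\zf$ (uniform conormality along the lifted diagonal) to pass to the limit, and the rapid vanishing at $\lbz,\rbz$ to localize the pairing near the $b$-diagonal in $\zf$.

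One step in the converse direction is glossed over. From $\ib{}(A)=0$ you correctly conclude that the Schwartz kernel vanishes to first order at $\zf$, but the inference ``$A:L^2\to x^\delta L^2$ bounded and $A:L^2\to H_b^{|m|}$ bounded, therefore $A:L^2\to x^\delta H_b^{|m|}$ bounded'' does not follow as stated: two mapping properties with different target norms give only a map into the intersection $x^\delta L^2\cap H_b^{|m|}$, not into the smaller weighted Sobolev space. You can repair this in one of two ways. (a) Observe directly that the intersection $x^\delta L^2\cap H_b^{|m|}$ nonetheless embeds compactly in $L^2$: the $H^{|m|}_{b,\mathrm{loc}}$ control gives Rellich compactness on any region $\{x\geq\epsilon\}$, while the $x^\delta L^2$ bound forces the $L^2$ mass in $\{x<\epsilon\}$ to be $O(\epsilon^{\delta})$ uniformly, so the two are enough to extract a strongly convergent subsequence; alternatively interpolate to land in $x^{\delta/2}H_b^{|m|/2}$. (b) More cleanly, use the infinite-order vanishing at $\lbz,\rbz$ (which allows dividing the kernel by $(xx')^{1/2}$ without losing control at the side faces) to factor $A=M_{x^{1/2}}\,B\,M_{x^{1/2}}$ with $B\in\Psi_b^m$, so that $A$ maps $L^2\to x^{1/2}H_b^{|m|}$ directly, and then invoke the compactness of $x^{1/2}H_b^{|m|}\hookrightarrow L^2$. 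Finally, the proposed ``cleaner Schur-test route'' is not actually simpler: a weighted Schur test on the kernel only yields boundedness into $x^\delta L^2$ and does nothing for the regularity gain, so you would still need to track the $b$-Sobolev order separately; and since the kernel is conormal of order $m<0$ (locally $L^1$ but not bounded near the diagonal) the Schur estimate itself requires care. The factorization argument avoids these issues.
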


When inverting an elliptic partial differential operator in the small $b$-calculus, the  error term will usually have a non-zero indicial operator, and therefore will not be compact. 
In order to obtain an error term whose indicial operator vanishes, 
we have to expand the small $b$-calculus into a bigger calculus, called the full $b$-calculus, in which the Schwartz kernels are permitted to have polyhomogeneous conormal expansions, i.e. expansions in powers and logarithms, at the boundary hypersurfaces $\lbz$, $\zf$ and $\rbz$. 

To define polyhomogeneous cornormal functions, we need the notion of an \emph{index set}. This is  a discrete subset $F\subset\mathbb{C}\times\mathbb{N}_0$ such that every `left segment'
$F\cap\{(z, p): \Re z<N\}$, $N\in\mathbb{R}$ is a finite set. Also, 
it is assumed that if $(z, p)\in F$ and $p\geq q$, $q \in \mathbb{N}$, we also have $(z, q)\in F$.

Given a boundary hypersurface and an index set, we can define polyhomogeneous conormal functions with respect to it. 
They are functions behaving like sums of products of powers and logarithms in one (and hence any) boundary defining function. 
\begin{definition}\label{poly}
Let $X$ be a manifold with boundary and let $H$ be its boundary. 
Given an index set $F$, a smooth function $u$ defined on the interior $X^\circ$ of $X$ 
is called {\sl polyhomogeneous conormal} as it approaches the boundary $H$ with respect to $F$ if, 
on a tubular neighborhood $[0, 1)\times H$ of $H$, one has 
\[u(x, y)\sim\sum_{(z, p)\in F}a_{z, p}(y)x^z\log^p x\]
as $x\rightarrow 0$ with $a_{z, p}$ smooth on $H$.
Here, $\sim$ means that the tail of the series, 
$$
u' = u - \sum_{(z, p)\in F, \Re z \leq B}a_{z, p}(y)x^z\log^p x,
$$
is conormal and vanishes to order $x^{B + \epsilon}$ for some $\epsilon > 0$, in the sense that $|V_1 \dots V_l u' |\leq C x^{B + \epsilon}$ for any finite number of vector fields $V_i$ tangent to $H$ applied to $u'$. 

Given a manifold with corners $X$, and an index family $\mathcal{E}$ for it, i.e.\ an assignment of an index set for each boundary hypersurface, we define polyhomogeneous conormality of $u \in C^\infty(X^\circ)$ by requiring that at each boundary hypersurface, $u$ has an expansion with respect to the corresponding index set with coefficients that are polyhomogeneous conormal on the hypersurface; this sets up an inductive definition. See \cite[Sec. 5.22]{RM93} for details. 
\end{definition}

\begin{definition}[Full $b$-calculus]
The {\sl full $b$-calculus} $\Psi_b^{m,\mathcal{E}}$ on $X$, 
where $m$ is a real number and $\mathcal{E}=(E_{\lbz}, E_{\rbz})$ is an index family for $X^2$, 
is defined as follows. A distribution $u$ on $X_b^2$ is in $\Psi_b^{m,\mathcal{E}}(X)$ 
if and only if $u=u_1+u_2+u_3$ with 
\begin{enumerate}
\item[\rm (i)] $u_1$ is in the small calculus $\Psi_b^m$;
\item[\rm (ii)] $u_2$ is polyhomogeneous conormal with respect to the index family 
$(E_{\lbz}, C^\infty, E_{\rbz})$, where $C^\infty:=\{(n, 0): n\in\mathbb{N}_0\}$ is the $C^\infty$ index set, and the index sets $E_{\lbz}$, $C^\infty$ and $E_{\rbz}$ are assigned to 
the three boundary hypersurfaces $\lbz$, $\zf$, $\rbz$ correspondingly;   
\item[\rm (iii)] $u_3=\beta^*v$, where $\beta: X_b^2\rightarrow X^2$ 
is the blow-down map and $v$ is polyhomogeneous conormal with respect to the index family $\mathcal{E}$.
\end{enumerate}
%The number $m$ is called the {\sl order} or {\sl differential order} of $u$.
\end{definition}

\begin{proposition}[{\cite[Prop. 5.46]{RM93}}]\label{bBcomposition}
The full $b$-calculus on $X$ is a two-sided module over the small $b$-calculus, i.e.
\[\Psi_b^{m, \mathcal{E}}(X)\circ\Psi_b^{m'}(X)\subset\Psi_b^{m+m', \mathcal{E}}(X),\]
and
\[\Psi_b^{m'}(X)\circ\Psi_b^{m, \mathcal{E}}(X)\subset\Psi_b^{m+m', \mathcal{E}}(X),\]
where $m, m'\in\mathbb{R}$, and $\mathcal{E}$ is an index family.
\end{proposition}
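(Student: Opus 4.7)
The plan is to prove this composition formula via the standard triple-space/pushforward approach that underlies Melrose's $b$-calculus. First I would introduce the $b$-triple space $X_b^3$, obtained from $X^3$ by first blowing up the triple corner $(\partial X)^3$ and then the three codimension-two corners $\{x_L=x_C=0\}$, $\{x_L=x_R=0\}$, $\{x_C=x_R=0\}$. The key structural fact is that each of the three stretched projections $\pi_{LC}, \pi_{LR}, \pi_{CR}\colon X_b^3 \to X_b^2$ (forgetting one factor) lifts to a smooth $b$-fibration, which is the whole reason for performing these particular blowups. Writing $\kappa_A$ and $\kappa_B$ for the Schwartz kernels (as $b$-half densities), the kernel of the composition is
\[
\kappa_{A \circ B} = (\pi_{LR})_* \bigl( \pi_{LC}^* \kappa_A \cdot \pi_{CR}^* \kappa_B \bigr),
\]
where the pushforward along the middle factor makes sense once one checks that the product lies in the appropriate space of polyhomogeneous/conormal $b$-densities on $X_b^3$.

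Next I would verify that the two factors pull back cleanly. For $\kappa_B \in \Psi_b^{m'}(X)$ in the small calculus, $\pi_{CR}^* \kappa_B$ is conormal of order $m'$ to the lifted diagonal, smooth up to the lift of $\zf$, and vanishes to infinite order at the lifts of $\lbz$ and $\rbz$ (in the $CR$-face sense). For $\kappa_A \in \Psi_b^{m, \mathcal{E}}(X)$, the pullback $\pi_{LC}^* \kappa_A$ decomposes according to the three-piece decomposition in the definition of the full $b$-calculus, and in each piece one reads off index sets or conormal orders at every boundary hypersurface of $X_b^3$. The product of these two distributions is well-defined because the conormality of $\kappa_B$ at the lifted diagonal is transverse to the polyhomogeneous structure of $\kappa_A$, and because the infinite-order vanishing of $\kappa_B$ at $\lbz_{CR}$ and $\rbz_{CR}$ kills every potentially problematic face.

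The heart of the argument is then Melrose's pushforward theorem, which says that the pushforward of a polyhomogeneous conormal distribution along a $b$-fibration is again polyhomogeneous conormal, with index sets computed by the \emph{extended union} of the index sets at the relevant hypersurfaces (together with log terms when exponents coincide). I would apply this to each of the three pieces of $\pi_{LC}^* \kappa_A \cdot \pi_{CR}^* \kappa_B$ separately and add the outputs. The conormal-to-diagonal piece produces a distribution of order $m+m'$ on $X_b^2$ conormal to $\diagb$, yielding the small-calculus part of the output. The polyhomogeneous pieces, after pushforward, contribute polyhomogeneous expansions at $\lbz$ and $\rbz$ of $X_b^2$. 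The combinatorial point to verify is that, because $\kappa_B$ vanishes to infinite order at the $CR$-lifts of $\lbz$ and $\rbz$, the only boundary faces of $X_b^3$ that contribute to the extended union above $\lbz$ (resp.~$\rbz$) in $X_b^2$ are those coming from $E_{\lbz}$ (resp.~$E_{\rbz}$) of $\kappa_A$; this is precisely what gives the output index family $\mathcal{E}$.

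The main obstacle is the careful bookkeeping in the last step: one must show that no new index set appears at $\lbz$ or $\rbz$ of $X_b^2$, i.e.\ that all the potentially "mixing" faces in $X_b^3$ are suppressed by the infinite-order vanishing of the small-calculus factor $\kappa_B$. This requires identifying, for each face $H$ of $X_b^3$, which of $\lb_{LC}, \rb_{LC}, \zf_{LC}, \lb_{CR}, \rb_{CR}, \zf_{CR}$ it maps into under the various projections, and then checking that under $\pi_{LR}$ it either lies over $\zf_{LR}$ (where $C^\infty$-type expansions combine acceptably with $\mathcal{E}$) or is killed by the vanishing of one of the factors. Once this face-by-face check is done, the two inclusions of the proposition follow immediately, the second by a symmetric argument (or by taking the formal adjoint, which interchanges $\lbz$ and $\rbz$ and exchanges the roles of left and right multiplication).
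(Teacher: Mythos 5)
The paper gives no proof of this proposition; it simply cites Melrose \cite[Prop.~5.46]{RM93}, and your proposal is a faithful sketch of exactly the triple-space/pushforward argument found there. The essential ingredients you identify --- lifting the three stretched projections on $X^3_b$ to $b$-fibrations, invoking the pushforward theorem for polyhomogeneous conormal $b$-densities, and using the infinite-order vanishing of the small-calculus factor at the $CR$-lifts of $\lbz$ and $\rbz$ to ensure the output index family is still $\mathcal{E}$ --- are precisely the ones in the cited reference, so this is the same approach.
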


The reason to introduce the full $b$-calculus is that within it, we can construct parametrices of elliptic $b$-differential operators with compact error term. 
For the proof of the following proposition, see \cite[prop 5.59]{RM93}.
\begin{proposition}\label{compacterror}
Let $P$ be an elliptic $b$-differential operator of order $k$ whose indicial operator $I_b(P)$ is invertible on $L^2(\partial X \times \RR)$. Then there exists $G$ in the full $b$-calculus of order $-k$
 such that the Schwartz kernels of the error terms $E = \Id-PG$ and $E' = \Id-GP$ are smooth across the diagonal, vanish at $\zf$ and are polyhomogeneous conormal at $\lbz, \rbz$ with positive order of vanishing there. This implies that 
$E, E'$ are compact on $L^2(X)$. Necessarily (in view of Proposition~\ref{bcomi0}), we have 
\begin{equation}\label{indicial-inverses}
I_b(G) = I_b(P)^{-1}. 
\end{equation}
\end{proposition}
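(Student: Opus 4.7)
The plan is to upgrade the small-calculus parametrix from Proposition \ref{invert1} to one whose error is compact. Proposition \ref{invert1} supplies $G_0 \in \Psi_b^{-k}(X)$ with $E_0 := \Id - PG_0 \in \Psi_b^{-\infty}(X)$; the Schwartz kernel of $E_0$ is smooth across the diagonal, vanishes to infinite order at $\lbz$ and $\rbz$, and is smooth up to $\zf$ where it generally restricts to a nonzero $b$-half-density. The residual task is to construct a correction $G_1$, in the full $b$-calculus, so that the Taylor series at $\zf$ of $\Id - P(G_0 + G_1)$ vanishes identically while $G_1$ is polyhomogeneous at $\lbz, \rbz$ with strictly positive-real-part exponents.

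First I would invert the model operator $\ib(P)$ by Mellin transform. Writing $t = -\log x$, the indicial operator is translation-invariant on $\partial X \times \mathbb{R}_t$, and its Mellin transform is a holomorphic family $\widehat{\ib(P)}(\lambda)$ of elliptic operators of order $k$ on the compact manifold $\partial X$. The $L^2$-invertibility hypothesis forces $\widehat{\ib(P)}(\lambda)$ to be invertible on the critical line $\Re \lambda = 0$, and analytic Fredholm theory then gives that $\widehat{\ib(P)}(\lambda)^{-1}$ extends meromorphically to $\mathbb{C}$ with only finitely many poles in any horizontal strip. Define $\ib(G_1)$ to be the translation-invariant operator whose Mellin symbol is $\widehat{\ib(P)}(\lambda)^{-1}\widehat{\ib(E_0)}(\lambda)$. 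Shifting the inverse Mellin contour into $\Re \lambda > 0$ or $\Re \lambda < 0$ and collecting residues at the poles yields a polyhomogeneous expansion of the resulting kernel as $t \to \pm\infty$, with index sets $E_{\lbz}, E_{\rbz}$ read off from the pole locations in the respective half-planes.

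Second, I would extend this model correction off $\zf$ by multiplying by a collar cutoff and lifting via the blow-down map, producing $G_1 \in \Psi_b^{-\infty, \mathcal{E}_1}(X)$. By multiplicativity of the indicial operator, $\ib(PG_1) = \ib(P)\ib(G_1) = \ib(E_0)$, so Proposition \ref{bcomi0} shows that $E^{(1)} := \Id - P(G_0 + G_1)$ vanishes at $\zf$. Higher-order Taylor terms of $E^{(1)}$ at $\zf$ are removed by iterating: at each stage the obstruction is a section over $\zf$ that is inverted by the same Mellin-based construction, producing a correction vanishing to one higher order at $\zf$. An asymptotic (Borel) summation assembles these into a single $G_\infty \in \Psi_b^{-\infty, \mathcal{E}}(X)$ for which $\Id - P(G_0 + G_\infty)$ vanishes to infinite order at $\zf$; setting $G = G_0 + G_\infty$ gives the required right parametrix.

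The main obstacle will be controlling the index family $\mathcal{E} = (E_{\lbz}, E_{\rbz})$ through the iteration and verifying compactness of the error. That the $E_{\lbz}, E_{\rbz}$ assemble into legitimate index sets follows because the poles of $\widehat{\ib(P)}(\lambda)^{-1}$ are discrete and left-bounded in each half-plane (from the meromorphic Fredholm structure), and each iteration only adds shifts by positive integers, giving the required closure; strict positivity of the real parts is automatic because there are no poles on $\Re \lambda = 0$. Compactness of the error then follows from a standard Hilbert--Schmidt estimate for elements of $\Psi_b^{-\infty,\mathcal{E}}(X)$ with $\mathcal{E}$ of positive real part. A symmetric construction produces a left parametrix $G'$; the usual calculation $G' = G'(PG + R) = (G'P)G + G'R = G + \text{(compact)}$ shows left and right parametrices agree modulo compact remainders, so $G$ is two-sided. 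Finally, \eqref{indicial-inverses} follows from $\ib(P)\ib(G) = \ib(PG) = \ib(\Id) = \Id$ on the model space.
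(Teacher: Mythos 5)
The paper does not give its own proof of this proposition; it cites \cite[Prop.\ 5.59]{RM93}, and your argument is a correct reconstruction of precisely that Mellin-transform construction. You correctly start from the small-calculus parametrix of Proposition~\ref{invert1}, invert the indicial family $\widehat{\ib(P)}(\lambda)$ meromorphically (noting that $L^2$-invertibility of $\ib(P)$ is exactly invertibility of the Mellin symbol on the critical line, hence positivity of the resulting index-set exponents), iterate to remove the full Taylor series of the error at $\zf$, Borel-sum, and then run the standard left-vs-right parametrix comparison; the identity $\ib(P)\ib(G)=\Id$ then forces \eqref{indicial-inverses}. This is essentially the argument in the cited source, so there is nothing substantive to flag.
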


\subsection{Scattering calculus}

Let $X$ be a manifold with boundary $\partial X$ and with local coordinates $x, y_1,..., y_{d-1}$ near $\partial X$, where
$x$ is a boundary defining function of $\partial X$. 
A smooth vector field $V$ on $X$ is a \emph{scattering vector field} 
if it is $x$ times a $b$-vector field on $X$, ie it has the form 
\[V=a_0x^2\partial_x+a_1x\partial_{y_1}+\cdots+a_{d-1}x\partial_{y_{d-1}},\]
with the coefficients $a_0,...,a_{d-1}$ are smooth functions of $x$ and $y$. Written in terms of $r = x^{-1}$, these take the form
$$
V = -a_0 \partial_r + \frac{a_1}{r} \partial_{y_1} + \dots \frac{a_{d-1}}{r} \partial_{y_{d-1}}.
$$
%The set of all scattering vector fields on $X$ is denoted by $\mathcal{V}_{sc}(X)$. 

A scattering differential operator is one that is generated over $C^\infty(X)$ by scattering vector fields. A key example is when $X$ is the radial compactification of $\RR^d$: then any constant coefficient vector field on $\RR^d$ is a scattering vector field viewed on $X$, and therefore any constant coefficient differential operator on $\RR^d$ is a scattering differential operator on $X$. The idea of the scattering calculus is to `microlocalize' this set of differential  operators. 

To define it we first need to blow up the product $X^2$ to produce the scattering double space. This is done in two stages: the first is to create the $b$-double space $X^2_b = [X^2; (\partial X)^2]$ as in the previous subsection. After this blowup, the diagonal lifts to be a product-type submanifold in $X^2_b$, i.e.\ can be expressed as the vanishing of $d$ coordinates in a coordinate system. The second step is to blow up the boundary of the lifted diagonal. The new boundary hypersurfaces so created are denoted $\bfc$ and $\sf$, respectively. 
%The two blow-ups, including an example of coordinates for each space, are illustrated in Figure \ref{dse123}.

\begin{comment}
\begin{figure}[p]
\subfloat[Original double space $X^2$]
{\includegraphics[scale=0.6]{dse1.pdf}}\\
\subfloat[The $b$-double space $X_b^2$]
{\includegraphics[scale=0.6]{dse2.pdf}}\\
\subfloat[The scattering double space $X_{sc}^2$]
{\includegraphics[scale=0.6]{dse3.pdf}}
\caption{A double blow-up}
\label{dse123}
\end{figure}
\end{comment}

\begin{proposition}\label{vectorspace}
The interior of the scattering face $\sf$ in the scattering double space $X_{sc}^2$ 
is a bundle over $\partial X$, and each 
fibre $\Omega_y$, $y\in\partial X$, has a natural vector space structure. Moreover, any scattering vector field lifts from either the left or the right factor to be tangent to $\sf$, and to be  a constant coefficient vector field on each fibre. 
\end{proposition}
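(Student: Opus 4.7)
The plan is to work in local coordinates adapted to the scattering double space and verify the claims by a direct computation, then check that the resulting structures are independent of the choice of coordinates. Near a boundary point of $X$, choose a boundary defining function $x$ and local coordinates $y = (y_1,\dots,y_{d-1})$ on $\partial X$, and use the corresponding coordinates $(x,y,x',y')$ on $X^2$. After the first blowup, valid projective coordinates near $\zf$ (away from $\lbz$ and $\rbz$) are $(s,x',y,y')$ with $s = x/x'$, so $\zf = \{x'=0\}$ and the lifted diagonal is $\{s=1,\ y=y'\}$. The boundary of the lifted diagonal in $X^2_b$ is then $\{x'=0,\ s=1,\ y=y'\}$.

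To carry out the second blowup I will introduce the scattering coordinates
\[
U = \frac{s-1}{x'} = \frac{x-x'}{(x')^2}, \qquad W_i = \frac{y_i - y_i'}{x'},
\]
together with $x'$ and $y'$. These give smooth coordinates on $X^2_{\SC}$ in a neighbourhood of the interior of $\sf$, and $\sf$ is cut out by $\{x'=0\}$. In these coordinates the interior of $\sf$ is parametrized by $(U,W,y')\in\RR\times\RR^{d-1}\times\partial X_{\mathrm{loc}}$, which exhibits it as a rank-$d$ affine bundle over $\partial X$ with a distinguished zero section (corresponding to the lifted diagonal meeting $\sf$). This gives the asserted vector space structure on each fibre $\Omega_{y'}$.

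The second task is to check that this vector space structure is natural, i.e.\ independent of the choice of $(x,y)$. For any other admissible boundary defining function $\tilde x = a(x,y) x$ with $a>0$ and any other boundary coordinate system $\tilde y = \phi(y) + O(x)$, one computes that the change of variables on $(U,W)$ at $x'=0$ is linear: since higher order terms in $x'$ are absorbed, the Jacobian at $\sf$ reduces to a linear transformation of the fibres given essentially by $(U,W)\mapsto (a(0,y')^{-2} U + \text{(linear in }W),\ D\phi(y')\cdot W)$. Thus the affine structure descends to a well defined linear structure on each fibre, which moreover is canonically identified with the scattering tangent space $\Tscstar[y']X$ (or rather its dual), explaining the name ``scattering face''.

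Finally, I will lift the generating scattering vector fields $x^2\partial_x$ and $x\partial_{y_i}$ from the left factor. Using $x = x' + U(x')^2$ and $y_i = y_i' + W_i x'$ at fixed right variables, the left-factor partials become $\partial_x = (x')^{-2}\partial_U$ and $\partial_{y_i} = (x')^{-1}\partial_{W_i}$, so
\[
x^2\partial_x = (1 + U x')^2\,\partial_U, \qquad x\partial_{y_i} = (1 + U x')\,\partial_{W_i}.
\]
Both are smooth on $X^2_{\SC}$, tangent to $\sf = \{x'=0\}$, and at $x'=0$ reduce to $\partial_U$ and $\partial_{W_i}$ respectively, which are exactly constant-coefficient vector fields on each fibre. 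The lift from the right factor is handled by the obvious symmetry. The main technical obstacle is the coordinate-invariance claim of the previous paragraph; once this is checked, everything else is a direct calculation.
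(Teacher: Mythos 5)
The paper states this proposition without proof, treating it as a standard fact from Melrose's scattering calculus (the cited references \cite{RM94, RM95}); so your direct coordinate verification is essentially filling in the textbook argument rather than diverging from the paper. Your approach is the right one: introduce projective coordinates $U=(x-x')/(x')^2$, $W=(y-y')/x'$ near the interior of $\sf$, observe that $\sf=\{x'=0\}$ so $(U,W)$ parametrize the fibres, check coordinate invariance, and lift $x^2\partial_x$, $x\partial_{y_i}$ to $(1+Ux')^2\partial_U$, $(1+Ux')\partial_{W_i}$, which restrict to $\partial_U$, $\partial_{W_i}$ at $x'=0$.

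Two small points. First, your claimed Jacobian at $\sf$ for the change $\tilde x = a(x,y)x$ is off by a power: writing out $\tilde x - \tilde x' = a(0,y')U(x')^2 + (\nabla_y a(0,y')\cdot W)(x')^2 + O((x')^3)$ and $(\tilde x')^2 = a(0,y')^2(x')^2 + O((x')^3)$, one gets
$\tilde U = a(0,y')^{-1}U + a(0,y')^{-2}\nabla_y a(0,y')\cdot W + O(x')$,
so the $U\to\tilde U$ coefficient is $a(0,y')^{-1}$, not $a(0,y')^{-2}$ (this is consistent with the fibre being the scattering \emph{tangent} space, on which $x^2\partial_x$ scales by $a(0,y')$). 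The substance of your argument — that the transition maps are linear on the fibres when restricted to $x'=0$ — is unaffected. Second, you call the front face of the first blowup $\zf$; in the scattering-calculus subsection of the paper that face is denoted $\bfc$ (the paper reserves $\zf$ for the $b$-front face in the $r=r'=0$ regime of the resolvent double space). This is purely notational and does not affect the mathematics.
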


It is convenient to regard elements of the scattering calculus (defined in the next paragraph) as acting on smooth scattering half-densities, i.e.\ taking the form at the boundary $f |r^{d-1} dr dy|^{1/2}$, $f \in C^\infty(X)$. Thus the Schwartz kernels of such operators will be distributions tensored with the half-density factor
\begin{equation}
\Big| r^{d-1} {r'}^{d-1} dr dy dr' dy' \Big|^{1/2}.
\label{double-sc-half-density}\end{equation}
\begin{definition}[Scattering calculus]
The \emph{scattering calculus} $\Psi_{sc}^{m, l}(X)$ of order $(m, l)$ is defined 
as the set of distributions on $X^2_{sc}$, times \eqref{double-sc-half-density}, satisfying
\begin{enumerate}
\item[\rm (i)] $x^{-l}v$ is conormal of order $m$ with respect to the diagonal (more precisely the diagonal lifted to $X^2_{sc}$)  uniformly up to $\sf$, 
where $x$ is a boundary defining function for $\sf$;
\item[\rm (ii)] $v$ vanishes to infinite order at the other boundary hypersurfaces. 
\end{enumerate}
The order $m$ is called the \emph{differential order} of $v$, and $l$ the \emph{boundary order}.
\end{definition}

\begin{remark} Using the Schwartz kernel theorem, elements of $\Psi_{sc}^{m, l}(X)$ may be interpreted as operators on half-densities on $X$. 
A scattering differential operator of order $m$ acting on half-densities is in $\Psi_{sc}^{m, 0}(X)$. 
\end{remark}

The scattering calculus is closed under composition. 
\begin{proposition}\label{scatteringclosure}\cite[Eqn. 6.12]{RM95}
Let $X$ be a manifold with boundary, and $m,l,m',l'\in\mathbb{R}$, then
\[\Psi_{sc}^{m, l}(X)\circ\Psi_{sc}^{m', l'}(X)\subset\Psi_{sc}^{m+m', l+l'}(X).\]
\end{proposition}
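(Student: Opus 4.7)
My plan is to follow the standard Melrose-style argument for composition in a pseudodifferential calculus on a manifold with corners, namely to realize composition as a pushforward of a product of lifted kernels from an appropriate triple space, and invoke the pushforward theorem for polyhomogeneous conormal distributions. The main goal is to build a scattering triple space $X^3_{\mathrm{sc}}$ on which all three natural projections to the double space $X^2_{\mathrm{sc}}$ become $b$-fibrations, and on which the lifts of conormal distributions supported near the diagonal of $X^2_{\mathrm{sc}}$ behave well under products.

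First I would construct $X^3_{\mathrm{sc}}$ by iterated blowup of $X^3$, starting with the triple $b$-blowup (that is, blow up the triple corner $(\partial X)^3$ followed by the three pairwise boundary corners), then blow up the preimages of the three partial diagonals at the boundary. The design criterion is that the three projections $\pi_L,\pi_R,\pi_C:X^3_{\mathrm{sc}}\to X^2_{\mathrm{sc}}$ (corresponding to forgetting the right, left, and middle factors) all lift to be $b$-fibrations in the sense of Melrose, and that the codimension-one faces of $X^3_{\mathrm{sc}}$ map to prescribed faces of $X^2_{\mathrm{sc}}$ in a controlled way. Let $A\in\Psi_{\mathrm{sc}}^{m,l}(X)$ and $B\in\Psi_{\mathrm{sc}}^{m',l'}(X)$ with Schwartz kernels $K_A$ and $K_B$. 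Then $\pi_R^*K_A\cdot\pi_L^*K_B$, computed using the factorization of the half-density bundle in \eqref{double-sc-half-density} on the triple space, is a distribution on $X^3_{\mathrm{sc}}$ which is conormal of order $m+m'$ to the intersection of the lifts of the two partial diagonals and which has the correct product boundary behavior (vanishing to infinite order at all faces not meeting the lifted diagonal, and with boundary order $l+l'$ at the appropriate scattering face).

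Next I would push forward by $\pi_C$. Since $\pi_C$ is a $b$-fibration whose fiber dimension matches the codimension picked up by the partial diagonals, the pushforward theorem guarantees that $(\pi_C)_*\bigl(\pi_R^*K_A\cdot\pi_L^*K_B\bigr)$ is a well-defined distribution on $X^2_{\mathrm{sc}}$, conormal of order $m+m'$ to the scattering diagonal, vanishing to infinite order at $\zf$, $\lbz$, $\rbz$, and $\bfc$, and of boundary order $l+l'$ at $\sf$. Identifying this pushforward with the kernel of $AB$ then yields $AB\in\Psi_{\mathrm{sc}}^{m+m',l+l'}(X)$, as claimed.

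The hard part will be verifying that the projections are $b$-fibrations and that the two lifted diagonals meet transversally in the interior of the appropriate boundary faces of the triple space, since this is what allows both the product of the lifted conormal distributions to be defined and its pushforward to remain conormal of the expected order rather than acquiring extra logarithmic or divergent factors. The bookkeeping of how the boundary order $l$ transforms under the lift $\pi_R^*$, in particular showing that the $x^{-l}$ factor in the definition of $\Psi_{\mathrm{sc}}^{m,l}$ is compatible with the half-density structure so that $l$ and $l'$ simply add, is the other delicate point; this is where one uses that $\sf$ is essentially a copy of the scattering normal bundle and that the relevant face of the triple space fibers over $\sf$ with fiber the same normal bundle, making the product structure manifest.
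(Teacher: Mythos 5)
The paper does not prove this proposition; it is quoted from Melrose's book, cited as \cite[Eqn. 6.12]{RM95}, so there is no in-paper proof to compare against. Your outline is the standard Melrose triple-space argument and is essentially the proof given in the cited reference (see also \cite{RM94}, Section 6, for the detailed construction in the scattering setting). One detail to add to your blowup sequence: after performing the triple and three pairwise boundary blowups to reach the triple $b$-space $X^3_b$, one must first blow up the \emph{common intersection} of the three lifted boundary partial diagonals before blowing up the three boundary partial diagonals themselves; without that initial separation the three submanifolds are not disjoint, the subsequent blowups do not commute, and the three stereographic projections fail to lift as $b$-fibrations. Beyond that, your sketch correctly identifies the load-bearing points --- the $b$-fibration property of the projections, transversality of the two lifted diagonals so that the product of conormal distributions is defined and of the expected order, and additivity of the boundary order at $\sf$ through the half-density factors --- and the appeal to the pushforward theorem is the right way to conclude; the rapid vanishing at the non-$\sf$ faces of $X^2_{\sca}$ is polyhomogeneous with empty index set, so that theorem applies exactly as you use it.
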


Like Proposition \ref{invert1} on the parametrix constructions under the small $b$-calculus, 
under the scattering calculus we also have a result  analogous to \cite[Theorem 18.1.24]{LH3}.
\begin{proposition}\label{invertsc22}
Suppose that $P\in\Psi_{sc}^{k, 0}(X)$ is elliptic. Then there exists $G\in\Psi_{sc}^{-k, 0}(X)$ such that 
\[PG-\Id,\hspace{2mm}GP-\Id\in\Psi_{sc}^{-\infty, 0}(X).\]
\end{proposition}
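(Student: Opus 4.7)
The plan is to use the standard symbolic parametrix iteration, of exactly the same form as in the classical pseudodifferential calculus on a closed manifold (\cite[Theorem 18.1.24]{LH3}) or in the small $b$-calculus (Proposition \ref{invert1}). The key inputs are (a) the closure of the scattering calculus under composition (Proposition \ref{scatteringclosure}), (b) the existence of a short exact principal symbol sequence
\[
0 \to \Psi_{sc}^{m-1,0}(X) \longrightarrow \Psi_{sc}^{m,0}(X) \xrightarrow{\sigma_m} S^{m}/S^{m-1} \to 0,
\]
valid in the scattering calculus just as in the standard one, and (c) the existence of asymptotic summation: given any sequence $R_j\in \Psi_{sc}^{-k-j,0}(X)$, there exists $G\in\Psi_{sc}^{-k,0}(X)$ with $G-\sum_{j=0}^{N}R_j\in\Psi_{sc}^{-k-N-1,0}(X)$ for every $N$. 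These are the standard facts established in \cite{RM94, RM95}.

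First, using ellipticity of $P$, the principal symbol $\sigma_k(P)$ is invertible on the scattering cotangent bundle (away from the zero section). I would pick $R_0=G_0\in \Psi_{sc}^{-k,0}(X)$ with principal symbol $1/\sigma_k(P)$. Composition then gives $PG_0-\Id\in \Psi_{sc}^{-1,0}(X)$. Next, inductively: having built $G_N=\sum_{j=0}^N R_j$ with $PG_N-\Id\in\Psi_{sc}^{-N-1,0}(X)$, I would choose $R_{N+1}\in\Psi_{sc}^{-k-N-1,0}(X)$ whose principal symbol cancels that of $\Id - PG_N$. This is possible because one only needs to divide the principal symbol of the error by $\sigma_k(P)$, again using ellipticity. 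Then $PG_{N+1}-\Id\in\Psi_{sc}^{-N-2,0}(X)$. Note that throughout, the boundary order stays at $0$: since ``elliptic'' here refers only to the interior (differential) symbol, we only improve the differential order at each step.

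Asymptotic summation then produces $G\in\Psi_{sc}^{-k,0}(X)$ with $G\sim\sum_j R_j$, giving
\[
PG-\Id \in \bigcap_{N}\Psi_{sc}^{-N,0}(X) = \Psi_{sc}^{-\infty,0}(X).
\]
The same construction applied from the left (inverting the principal symbol of $P$ from the other side) yields $G'\in\Psi_{sc}^{-k,0}(X)$ with $G'P-\Id\in\Psi_{sc}^{-\infty,0}(X)$. A standard two-sided argument $G'=G'(PG)+(\Psi_{sc}^{-\infty,0})=(G'P)G+(\Psi_{sc}^{-\infty,0})=G+\Psi_{sc}^{-\infty,0}(X)$ shows that the same $G$ can be taken to be both a left and right parametrix.

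There is no real obstacle here beyond bookkeeping; the construction is entirely analogous to the corresponding statements in the closed pseudodifferential calculus and in the small $b$-calculus already cited in the excerpt. The only point requiring a moment of care is that, because the notion of ellipticity used controls only the differential principal symbol, the iterative scheme improves only the differential order, so the error that remains after asymptotic summation lies in $\Psi_{sc}^{-\infty,0}(X)$ rather than in the doubly residual class $\Psi_{sc}^{-\infty,\infty}(X)$; this is precisely what the statement claims.
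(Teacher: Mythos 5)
Your proposal is correct and is precisely the standard symbolic parametrix iteration the paper is implicitly invoking; the paper itself gives no proof of this proposition, merely noting it is analogous to \cite[Theorem 18.1.24]{LH3} and the $b$-calculus version (Proposition~\ref{invert1}). Your observation that ellipticity here only controls the differential (interior) symbol, so the iteration improves only the differential order and the remainder lies in $\Psi_{sc}^{-\infty,0}(X)$ rather than the doubly residual class, is exactly the point worth making explicit.
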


Similarly to the case of the indicial operators in Section~\ref{subsec:b}, 
the \emph{normal operator} of $A \in \Psi^{m,0}(X)$, denoted $\norsc(A)$, is defined to be the restriction of the Schwartz kernel of $A$ to the scattering face $\sf$. 
This restriction can be interpreted (in a canonical way) as  a smooth function on $\partial X$ valued in densities on each fibre. 
These densities can be interpreted as convolution operators on functions (or half-densities) on each fibre. Under this interpretation, normal operators can be composed, and the action of taking normal operators is an algebra homomorphism:

\begin{proposition}\cite[Eqn. 5.14]{RM94}
Let $A$ and $B$ be elements of $\Psi^{*, 0}(X)$. Then 
\[\norsc(AB)=\norsc(A)\norsc(B).\]
\end{proposition}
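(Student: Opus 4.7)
The plan is to establish multiplicativity of the normal operator by the standard triple space construction in the scattering calculus, parallel to the proof of the analogous fact for indicial operators in the $b$-calculus. The argument has three steps: build the scattering triple space with its three projections, identify the fibre structure on the scattering face of the triple space, and then restrict the composition formula to $\sf$.

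First I would introduce the scattering triple space $X^3_{\sca}$, obtained from $X^3$ by an appropriate iterated sequence of blowups (the $b$-type blowups of the three codimension-two corners $(\partial X)^2$ and the triple corner $(\partial X)^3$, followed by the scattering blowups along the boundaries of the partial diagonals). The blowup sequence is set up precisely so that each of the three natural projections $\pi_F, \pi_C, \pi_S : X^3 \to X^2$ that forget one factor lifts to a smooth b-fibration $X^3_{\sca} \to X^2_{\sca}$. With this in hand, the composition of $A, B \in \Psi^{*,0}_{\sca}(X)$ can be written as a pushforward-pullback
$$
AB = (\pi_C)_* \bigl( \pi_S^* A \cdot \pi_F^* B \bigr),
$$
with the scattering half-density factors \eqref{double-sc-half-density} absorbed so that the integrand is a smooth half-density on $X^3_{\sca}$ near the relevant boundary faces.

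Second, I would analyze the triple scattering face $\sf_3 \subset X^3_{\sca}$. By the construction of the blowups, the interior of $\sf_3$ fibres over $\partial X$ with fibre canonically isomorphic to $\Omega_y \times \Omega_y$, two copies of the vector space fibre of Proposition \ref{vectorspace}. The three lifted projections restrict to $\sf_3 \to \sf$ as, fibrewise over $y \in \partial X$, the second projection $(v,w) \mapsto w$ (for $\pi_F$), the first projection $(v,w) \mapsto v$ (for $\pi_S$), and the difference map $(v, w) \mapsto v - w$ (for $\pi_C$) onto the vector space $\Omega_y$. Crucially, $\pi_C$ restricted to $\sf_3$ is a linear fibration with fibres isomorphic to $\Omega_y$, so the pushforward becomes fibre integration on these linear fibres.

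Third, I would restrict the composition formula to $\sf$. The pullbacks $\pi_S^* A$ and $\pi_F^* B$ restrict to $\sf_3$ to give $\norsc(A)$ evaluated on the first $\Omega_y$-factor and $\norsc(B)$ evaluated on the second, and $(\pi_C)_*$ becomes integration over $w \in \Omega_y$ with $v - w$ held fixed. This yields
$$
\norsc(AB)(v) = \int_{\Omega_y} \norsc(A)(v - w)\, \norsc(B)(w)\, dw,
$$
which is exactly the convolution on $\Omega_y$ that defines the composition of $\norsc(A)$ and $\norsc(B)$ as translation-invariant operators on each fibre; hence $\norsc(AB) = \norsc(A)\,\norsc(B)$. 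The main obstacle is the first step: verifying that the blowups defining $X^3_{\sca}$ are set up so that all three projections lift to b-fibrations and so that the fibre structure on $\sf_3$ is as claimed. This is the technical heart of Melrose's scattering calculus and is carried out in detail in \cite{RM94}; once it is in hand, the multiplicativity statement reduces to restricting a well-defined composition to a boundary face and recognising convolution.
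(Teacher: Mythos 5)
The paper itself offers no proof of this proposition; it simply cites \cite[Eqn. 5.14]{RM94}. Your proposal reconstructs the standard triple-space argument from Melrose's scattering calculus, and that is indeed the correct route: lift the composition to the scattering triple space, identify the fibre of the triple scattering face over $y\in\partial X$ as $\Omega_y\times\Omega_y$ with the three lifted projections acting linearly, and recognize the pushforward as convolution. The overall structure — b-fibration lemma, fibre identification, restriction to $\sf$ — matches the reference.

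One small bookkeeping inconsistency: if you take $\pi_S$ and $\pi_F$ to restrict to the two coordinate projections $(v,w)\mapsto v$ and $(v,w)\mapsto w$ and you want the final formula $\norsc(AB)(u)=\int\norsc(A)(u-w)\,\norsc(B)(w)\,dw$, then $\pi_C$ must restrict to the \emph{sum} map $(v,w)\mapsto v+w$ (equivalently, choose the linear coordinate on $\sf$ with a sign so that the difference $z-z''$ decomposes as the sum of $z-z'$ and $z'-z''$); with your stated choice $\pi_C:(v,w)\mapsto v-w$ the pushforward gives a correlation $\int\norsc(A)(u+w)\,\norsc(B)(w)\,dw$ rather than a convolution. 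This is purely a sign convention and does not affect the conclusion, but it is worth fixing so the fibre maps and the displayed integral are mutually consistent. Beyond that, the argument is sound, and you are right that the genuine technical content — constructing $X^3_{\sca}$ so that all three stretched projections are b-fibrations and the scattering triple face has the stated linear structure — is carried out in \cite{RM94} and must be taken as input.
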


As with the indicial operator, vanishing of the  normal operator is related to compactness:

\begin{proposition} Let $A \in \Psi^{m, 0}(X)$ with $m < 0$. Then $A$ is compact if and only if $\norsc(A)$ vanishes identically. 
\end{proposition}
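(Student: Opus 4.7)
The plan is to prove both directions using the identification of $\norsc(A)(y)$ as a translation-invariant (convolution) operator on the fibre $\Omega_y$ of $\sf$ over each $y\in\partial X$, coupled with standard approximation and concentration arguments.

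\textbf{Sufficiency.} Suppose $\norsc(A)\equiv 0$. By definition, the Schwartz kernel of $A$ already vanishes to infinite order at every boundary hypersurface of $X^2_{sc}$ other than $\sf$, and the hypothesis now forces vanishing at $\sf$ as well. Hence $A\in\Psi^{m,1}_{sc}(X)$ with $m<0$. I would then show compactness on $L^2(X)$ by the standard approximation argument: with $x_{\sf}$ a boundary defining function for $\sf$, cut off the kernel by $\chi(x_{\sf}/\epsilon)$ where $\chi$ vanishes near $0$ and equals one away from $0$. The cutoff kernel defines an operator $A_\epsilon$ whose kernel is smooth and supported away from every boundary face and which is compact on $L^2(X)$ (e.g.\ by a standard Rellich-type argument after localisation, using $m<0$). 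The vanishing of the kernel of $A$ at $\sf$, together with the rapid vanishing at the remaining faces, gives a norm estimate $\|A-A_\epsilon\|_{L^2\to L^2}\lesssim \epsilon^\delta$ for some $\delta>0$, so $A$ is a norm limit of compact operators.

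\textbf{Necessity.} I would prove the contrapositive: if $\norsc(A)\not\equiv 0$, then $A$ is not compact. Choose $y_0\in\partial X$ at which $\norsc(A)(y_0)$ is non-trivial; by Proposition \ref{vectorspace}, $\Omega_{y_0}$ carries a vector space structure and $\norsc(A)(y_0)$ is a non-trivial translation-invariant operator. Pick a compactly supported smooth half-density $\phi$ on $\Omega_{y_0}$ with $\psi:=\norsc(A)(y_0)\phi\neq 0$ in $L^2$. Using boundary coordinates $(x,y)$ near $y_0$ and the identification $r=1/x$, choose a sequence $v_n\in\Omega_{y_0}$ with $|v_n|\to\infty$ and construct $L^2$-normalised half-densities $f_n$ on $X$ corresponding to translates of a fixed interior model function by $v_n$ in the scattering geometry (so $f_n$ concentrates near $\partial X$ at $y=y_0$). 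Then $f_n\rightharpoonup 0$ weakly in $L^2(X)$, while the polyhomogeneous structure of the kernel at $\sf$ shows that $Af_n$ is asymptotically close in $L^2$ to the corresponding translate of $\psi$, so $\|Af_n\|_{L^2}\to\|\psi\|_{L^2}>0$. This contradicts compactness.

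\textbf{Main obstacle.} The technical heart of the argument is the error estimate in the necessity direction: one must verify that, uniformly in the concentrating sequence $f_n$, the action of the full Schwartz kernel of $A$ on $f_n$ is approximated in $L^2$ by the action of $\norsc(A)(y_0)$ on the fibre model. This amounts to controlling the remainders in the Taylor expansion of the kernel at $\sf$ and the contributions of the other boundary hypersurfaces, and showing that the gains from the vanishing structure there beat any mass lost by concentration of the $f_n$. The sufficiency direction is more routine, but still requires care in turning the boundary-order gain into an operator-norm estimate when $m$ is only assumed negative, not necessarily small enough to be directly Hilbert--Schmidt.
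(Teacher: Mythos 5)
The paper states this proposition as a background fact from Melrose's scattering calculus without providing a proof (the reference for the neighbouring material is \cite{RM94}), so there is no in-paper argument to compare against directly; your outline is the standard argument from that literature and is correct in its essentials. A couple of remarks. For sufficiency, the observation that $\norsc(A)\equiv 0$ forces $A\in\Psi^{m,1}_{sc}(X)$ is the key step, and it does hold: the scattering symbol near $\sf$ is smooth in the boundary defining function, so vanishing of its restriction to $\{x_{\sf}=0\}$ lets you factor out $x_{\sf}$ by Taylor's theorem. Your cutoff argument then works, but the cleaner and more standard route (which sidesteps the norm estimate you flag as an obstacle) is to invoke the mapping property $\Psi^{m,l}_{sc}(X):L^2(X)\to H^{-m,l}_{sc}(X)$ together with the compactness of the embedding $H^{s,r}_{sc}\hookrightarrow L^2$ when $s>0$ and $r>0$; this is the scattering-calculus analogue of Rellich and gives compactness with no epsilon bookkeeping. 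For necessity, your concentration argument is the right idea and is how the result is proved; the technical content you identify (uniform control of the Taylor remainder at $\sf$ and of contributions from the other faces) is genuine but standard, and is essentially the statement that the kernel is conormal at the lifted diagonal uniformly up to $\sf$ with rapid decay at $\bfc$, $\lbi$, $\rbi$. One small stylistic point: you only need necessity or sufficiency in the form you use later, but stating both as you do is what the proposition asserts, and your treatment covers both. In short, the proposal is sound; it is not in tension with the paper, which simply cites this fact, and it reproduces the standard proof strategy.
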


\begin{remark} Alternatively, we may describe the boundary behaviour in the scattering calculus by taking the fibrewise Fourier transform of each convolution operator, obtaining a family of multipliers; this is known as the normal  or boundary symbol. Composition in terms of the boundary symbol is simply pointwise product. 
\end{remark}

\begin{proposition}\label{sc-invert} If $A \in \Psi^{m,0}(X)$ is elliptic with invertible normal operator, then there exists $B \in \Psi^{-m, 0}(X)$ such that $E = AB - \Id$ is in $\Psi^{-\infty, \infty}(X)$, i.e. its Schwartz kernel is smooth across the diagonal and rapidly vanishing at the boundary of $X^2_{sc}$. In particular, $E$ is compact and hence
$A$ is Fredholm, with parametrix $B$. Necessarily, we have
$$
\norsc{B} = (\norsc{A})^{-1}. 
$$
\end{proposition}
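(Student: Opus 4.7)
The plan is a two-stage parametrix construction: first remove the singularity on the diagonal using ellipticity, then remove the error at the scattering face $\sf$ using invertibility of the normal operator. For the first stage, Proposition~\ref{invertsc22} applied to the elliptic operator $A$ produces $B_0 \in \Psi_{sc}^{-m,0}(X)$ with
\[ R_0 := A B_0 - \Id \in \Psi_{sc}^{-\infty,0}(X). \]
The error $R_0$ is smoothing in the differential sense but need not vanish at $\sf$, so in particular need not be compact.

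For the second stage, I would improve $B_0$ iteratively, peeling off the error at $\sf$ one order at a time. Since the normal operator is multiplicative and $\norsc(A)$ is invertible, the equation $\norsc(A)\,\norsc(C_0) = -\norsc(R_0)$ has a unique solution as a smooth family of fibrewise convolution operators on $\sf$; lifting this to $C_0 \in \Psi_{sc}^{-m,0}(X)$ and setting $R_1 := A(B_0 + C_0) - \Id$ makes $\norsc(R_1) = 0$, hence $R_1 \in \Psi_{sc}^{-\infty,1}(X)$. Inductively at step $k$, I would solve for $C_k \in \Psi_{sc}^{-m,k}(X)$ so that the restriction of $x^{-k}(R_k + A C_k)$ to $\sf$ vanishes, which is possible by the same invertibility argument applied to the model at that order. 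Taking an asymptotic (Borel-type) sum $B \sim B_0 + \sum_{k \geq 0} C_k$, which converges asymptotically because the $C_k$ vanish to higher and higher order at $\sf$, produces $B \in \Psi_{sc}^{-m,0}(X)$ with
\[ E := AB - \Id \in \bigcap_{k \geq 0} \Psi_{sc}^{-\infty,k}(X) = \Psi_{sc}^{-\infty,\infty}(X). \]
Such a kernel is smooth and rapidly vanishing at the boundary of $X^2_{sc}$, hence Hilbert--Schmidt and in particular compact on $L^2(X)$; running the analogous construction from the left (or applying the above to $A^*$, which shares ellipticity and invertibility of its normal operator) yields a left parametrix, so $A$ is Fredholm with parametrix $B$. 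Finally, $\norsc(B) = \norsc(A)^{-1}$ follows by applying the normal-operator map to $AB - \Id = E$ and using $\norsc(E) = 0$ together with multiplicativity.

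The main obstacle is the inductive step at $\sf$: one must verify that the fibrewise object $-\norsc(A)^{-1}\cdot(\text{model error})$ lies in the image of the normal-operator map from $\Psi_{sc}^{-m,k}(X)$, and construct a lift depending smoothly on the base point in $\partial X$. This relies essentially on the identification of $\sf$ as a vector bundle over $\partial X$ (Proposition~\ref{vectorspace}), which makes fibrewise convolution and inversion well-defined, together with surjectivity of the normal-operator map onto the appropriate class of smooth symbols. Once this lifting is in place, the asymptotic summation and the Fredholm conclusion are standard parts of the scattering-calculus machinery.
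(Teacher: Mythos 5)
The paper's own ``proof'' is just a citation to Melrose \cite[Section~6]{RM94}; your argument supplies an actual proof, and it follows the standard two-stage parametrix construction of the scattering calculus, which is indeed the argument behind the cited reference. The overall structure is correct: the symbolic parametrix from Proposition~\ref{invertsc22} removes the diagonal singularity, iterative inversion of the normal operator improves the order of vanishing at $\sf$, an asymptotic sum together with a left parametrix gives Fredholmness, and the normal-operator identity then follows from multiplicativity together with $\norsc(E)=0$.

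One bookkeeping point should be tightened, and it matters for the claimed conclusion. You lift the model correction to $C_k \in \Psi_{sc}^{-m,k}(X)$ and then assert $R_{k+1} \in \Psi_{sc}^{-\infty, k+1}(X)$. But with $C_k$ only of differential order $-m$ you get $A C_k \in \Psi_{sc}^{0,k}(X)$, hence $R_{k+1}=R_k+AC_k \in \Psi_{sc}^{0, k+1}(X)$; the asymptotic sum would then only give $E \in \Psi_{sc}^{0, \infty}(X)$, whose kernel is not smooth across the diagonal, short of the claimed $\Psi_{sc}^{-\infty,\infty}(X)$. The resolution is essentially the ``main obstacle'' you flag at the end, but one should note that the lift can (and must) be taken of differential order $-\infty$: the model correction $-\norsc(A)^{-1}\,\norsc(x^{-k}R_k)$ is a smooth family of Schwartz convolution kernels on the fibres of $\sf$, because $\norsc(x^{-k}R_k)$ is Schwartz (coming from the $-\infty$ differential order of $R_k$) and $\norsc(A)^{-1}$ is a boundary symbol of order $-m$, bounded with all derivatives decaying, so the fibrewise Fourier transform of the product remains Schwartz. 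A Borel-type extension off $\sf$ therefore lands $C_k$ in $\Psi_{sc}^{-\infty, k}(X)$, which gives $A C_k \in \Psi_{sc}^{-\infty, k}(X)$ and hence $R_{k+1} \in \Psi_{sc}^{-\infty, k+1}(X)$ as required. With this refinement the rest of your argument goes through and produces $E \in \Psi_{sc}^{-\infty,\infty}(X)$ as claimed.
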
 

\begin{proof} See \cite[Section 6]{RM94}. \end{proof}

%%%%%%%%%%%%%%%%%%%%%%%%%%%%%%%%%
%%%%%%%%%%%%%%%%%%%%%%%%%%%%%%%%%
%%%%%%%%%%%%%%%%%%%%%%%%%%%%%%%%%

\section{The blown-up double space}
\label{theblownupspace}
As discussed in the Introduction, we will construct the resolvent kernel $P^{-1} = (H+1)^{-1}$ on a compactified and blown up version of its natural domain $M^2$, using both $b$- and scattering blowups. We start by compactifying $M^2$ in each factor separately, i.e. we pass to the compact space $[0, \infty]_r \times Y \times [0, \infty]_{r'} \times Y$, where $[0, \infty]$ indicates the compactification of $[0, \infty)$ by a point at $r = \infty$, such that $1/r$ is a boundary defining function at $r = \infty$. 
As noted in the Introduction, $r P r$ is an elliptic $b$-differential operator down to $r=0$, while $P$ itself is an elliptic scattering differential operator up to $r = \infty$. Therefore we perform the $b$-blowup at $r = r' = 0$ and the scattering blowups at $r = r' = \infty$. This means that we blow up the corner $r = r' = 0$, the corner $r = r' = \infty$ and finally the boundary of the lifted diagonal $\{ r = r', y = y' \}$ at $r = r' = \infty$. %This space is illustrated in Figure~\ref{domain}. 

We label the boundary hypersurfaces of $[0, \infty]_r \times Y \times [0, \infty]_{r'} \times Y$ by $\lbz$, $\lbi$, $\rbz$ and $\rbi$ according as they arise from $\{ r = 0 \}$, $\{ r = \infty \}$, $\{ r' = 0 \}$, or $\{ r' = \infty \}$, respectively. The new boundary hypersurfaces created by blowup are labelled $\zf$, $\bfc$ and $\sf$, according as they arise from the blowup of $r = r' = 0$, $r = r' = \infty$ or the boundary of the lifted diagonal at $r = r' = \infty$, respectively. The resulting space after the blow-ups at $r=r'=0$ and $r=r'=\infty$ is called \emph{the blown-up space}. See Figure~\ref{domain}.  

We next discuss local coordinates near the various blown up faces. Near $\zf$, local coordinates are $(r/r', r', y, y')$ when $r/r' \leq C$ (that is, away from $\rbz$) and $(r, r'/r, y, y')$ when $r'/r \leq C$ (that is, away from $\lbz$). Near $\bfc$ and away from $\sf$ the situation is similar: coordinates are $(r'/r, {r'}^{-1}, y, y')$ for $r'/r \leq C$ and $(r/r', r^{-1}, y, y')$ for $r/r' \leq C$. 
Near the interior of $\sf$, coordinates are $(r-r', r(y-y'), y, r^{-1})$. In the case that $M$ is Euclidean space $\RR^d$, with Euclidean coordinate $z$, then $z-z'$ is a linear coordinate on each fibre of $\sf$ (cf. Proposition~\ref{vectorspace}). 
In particular, the diagonal is defined by $r/r' = 1, y=y'$ for small $r$ (that is, away from $\sf$ ) and $r -r' = 0$, $r(y-y') = 0$ or $r -r' = 0$, $r'(y-y') = 0$ for large $r$ (that is, away from $\zf$). 
The following result about the diagonal will be useful later.  

\begin{proposition}\label{quadraticdefining}
Let $\varphi:[0, \infty)\rightarrow [0, 1]$ be an increasing smooth function such that 
$\varphi(x)=x$ for $x\in[0, \frac{1}{2}]$ and $\varphi(x)=1$ for $x\in [1, \infty)$. 
Then the function 
\[a_{\rm diag}(z, z')=\frac{\dist(z, z')^2}{\varphi^2(r')},\]
where $z=(r, y)$ and $z'=(r', y')$, 
is a quadratic defining function for the diagonal in the blown-up space; that is, $a_{\rm diag} \geq 0$, the diagonal lifted to the blown up space is given by
$\{ a_{\rm diag} = 0 \}$, and the Hessian of $a_{\rm diag}$ in directions normal to the diagonal is positive definite. 
\end{proposition}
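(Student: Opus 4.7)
The plan is to reduce the verification to two local computations on the blown-up space, one near each of the two boundary hypersurfaces $\zf$ and $\sf$ that the lifted diagonal meets. Nonnegativity of $a_{\rm diag}$ is immediate from $\dist \geq 0$ and $\varphi > 0$, and in the interior of $M^2$ the claim reduces to the standard Riemannian fact that $\dist(z, z')^2$ is a quadratic defining function for the diagonal, since $\varphi(r')$ is smooth and positive away from $r' = 0$. Near $\lbz, \rbz, \lbi, \rbi$ and on the interior of $\bfc$ the lifted diagonal is disjoint from the closure of the face, so there is nothing to check. The key input at the remaining two faces is the explicit metric-cone distance formula
$$
\dist\bigl((r, y), (r', y')\bigr)^2 = r^2 + {r'}^2 - 2 r r' \cos\bigl( d_Y(y, y') \bigr),
$$
valid whenever $d_Y(y, y') \leq \pi$, where $d_Y$ denotes the distance on $(Y, h)$; near the diagonal of $Y^2$ the function $d_Y^2$ is smooth, so this formula extends $\dist^2$ smoothly in a neighbourhood of the diagonal of $M^2$.

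Near $\zf$ I would use the coordinates $(s = r/r', r', y, y')$, in which $\varphi(r') = r'$, so that
$$
a_{\rm diag} = s^2 + 1 - 2 s \cos\bigl( d_Y(y, y') \bigr).
$$
This is independent of $r'$ and hence smooth up to $\zf = \{r' = 0\}$, vanishes exactly on $\{s = 1, \, y = y'\}$ (the lifted diagonal in this chart), and a direct computation shows the normal Hessian at a diagonal point is block-diagonal with entries $2$ and $2 h(y)$: $\partial_s^2 a_{\rm diag} = 2$, the mixed derivatives $\partial_s \partial_{y_i}$ contain a factor $\sin d_Y$ that vanishes on the diagonal, and $2(1 - \cos d_Y)$ agrees with $d_Y^2$ to second order while $d_Y^2$ has Hessian $2h$ at the diagonal of $Y^2$. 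The other chart $(r, r'/r, y, y')$ is handled symmetrically.

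Near $\sf$ I would use the scattering-type coordinates $(x = 1/r, u = r - r', v = r(y - y'), y)$, in which $\varphi(r') = 1$. The main point is to make the cancellation of the apparent $x^{-2}$ singularity in $r r' = (1 - x u)/x^2$ against the Taylor expansion of $\sin^2(d_Y/2)$ in the rescaled variable $v$. Writing $y' = y - x v$ in local coordinates and using $d_Y(y, y - x v)^2 = x^2 \bigl( |v|_{h(y)}^2 + x\, S(x, v, y) \bigr)$ with $S$ smooth, together with $4 \sin^2(t/2) = t^2 G(t^2)$ with $G$ smooth and $G(0) = 1$, one obtains
$$
a_{\rm diag} = u^2 + (1 - x u) \bigl( |v|_{h(y)}^2 + x\, \tilde F(x, u, v, y) \bigr)
$$
with $\tilde F$ smooth. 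This is smooth up to $x = 0$, vanishes exactly on $\{u = 0, \, v = 0\}$ (the lifted diagonal at $\sf$), and has Hessian at $(u, v) = 0$ equal to $\mathrm{diag}(2, 2 h(y))$ at $x = 0$; positive definiteness persists for small $x > 0$ by continuity.

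The main obstacle is precisely the smoothness verification at $\sf$: one must check that after the scattering blow-up the correct smooth transverse variable in the angular direction is $v = r(y - y')$ rather than $y - y'$, and that the divergent factor $r r' \sim x^{-2}$ is exactly absorbed by the corresponding $x^{2}$ in the expansion of $\sin^2(d_Y/2)$. Once this cancellation is made explicit in the form above, both the coincidence of the zero set with the lifted diagonal and the positive definiteness of the normal Hessian follow from the leading-order expression, and the proposition is established.
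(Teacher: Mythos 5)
Your proof is correct and follows essentially the same route as the paper: invoke the explicit metric-cone distance formula, reduce to the interior plus the two faces $\zf$ and $\sf$ that the lifted diagonal meets, and check the normal Hessian there. You are somewhat more thorough than the paper at $\sf$, where you explicitly pass to the scattering coordinates $(x,u,v,y)$ and carry out the cancellation of the $x^{-2}$ in $rr'$ against the $x^{2}$ from the expansion of $1-\cos d_Y$; the paper merely records the expansion $(r-r')^2+rr'(d_Y^2+O(d_Y^4))$ and asserts it is ``good.''
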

\begin{proof}
The formula for the distance on a metric cone is given by 
\begin{equation}
\dist(z, z')^2= \begin{cases} r^2+r'^2-2rr'\cos\big(\dist_Y(y, y')\big), \quad \dist_Y(y,y') \leq \pi \\
(r+r')^2, \quad \dist_Y(y,y') \geq \pi. \end{cases}
\label{distanceonm}\end{equation}
(The second line is because when $\dist_Y(y,y') \geq \pi$ the fastest way to get from $(r,y)$ to $(r', y')$ is to go straight to the cone point and back out again.) So  near the diagonal we have
\begin{equation}
\begin{split}
\dist(z, z')^2&=(r-r')^2+2rr'\bigg(1-\cos\big(d_Y(y, y')\big)\bigg)\\
&=(r-r')^2+rr'\bigg(\dist_Y(y,y')^2+O\big(\dist_Y(y,y')^4\big)\bigg).
\end{split}
\end{equation}
Near the $\sf$-face, we have 
\[a_{\rm diag}(z, z')=\dist(z, z')^2=(r-r')^2+rr'\bigg(\dist_Y(y,y')^2+O\big(\dist_Y(y,y')^4\big)\bigg),\]
which is good for a quadratic defining function for the diagonal. 
To see that we recall from the discussion before this proposition that near $\sf$ the diagonal is defined by 
$r'-r=0$, and $r(y-y')=0$ or $r'(y-y')=0$, and we 
also recall the standard fact that $\dist_Y(y, y')^2$ 
is a quadratic defining function for the diagonal of $Y^2$ for any closed Riemannian manifold $Y$.

Near the $\zf$-face, we have 
\[a_{\rm diag}(z, z')=\frac{\dist(z, z')^2}{r'^2}=(\frac{r}{r'}-1)^2+\frac{r}{r'}\bigg(\dist_Y(y,y')^2+O\big(\dist_Y(y,y')^4\big)\bigg),\]
which is again good for a quadratic defining function for the diagonal, 
as here the diagonal is instead defined by $\frac{r}{r'}=1$ and $y=y'$. 
\end{proof}

%The blown-up space is illustrated by Figure \ref{busag}.
\begin{comment}
The four boundaries are labelled as $\lbz${}, $\rbz${}, $\lbi${}, $\rbi${}, with $\lbz${} 
meaning the left variable $r=0$, 
$\rbz${} meaning the right variable $r'=0$, 
$\lbi${} meaning the left variable $r=\infty$ 
and $\rbi${} meaning the right variable $r'=\infty$. 
The diagonal is also shown.

\begin{figure}[h!]
\centering
\includegraphics[scale=0.5]{scattering.pdf}
\caption{Blown-up space}
\label{busag}
\end{figure}
\end{comment}

\subsection{Densities on the blown-up space}
By a  smooth $b$-half-density on the blown-up space we mean a half-density of  the form 
\[u(r, r', y, y')\bigg|\frac{dr}{r}\frac{dr'}{r'}dydy'\bigg|^{\frac{1}{2}},\]
where $u$ is smooth.  (This is perhaps misleading since it is only a $b$-half density in the usual sense away from $\sf$. However, we shall only use this when either $r$ or $r'$ is small, in which case it certainly is a $b$-half density.) 
Let $x=\frac{1}{r}, x'=\frac{1}{r'}$. Then by a smooth scattering-half-density we mean a density of the form, 
\[v(x, x', y, y')\bigg|\frac{dxdx'dydy'}{x^{d+1}x'^{d+1}}\bigg|^{\frac{1}{2}},\]
where $v$ is smooth. In terms $r$ and $r'$ it becomes, 
\[v(r, r', y, y')\big|r^{d+1}r'^{d+1}d(\frac{1}{r})d(\frac{1}{r'})dydy'\big|^{\frac{1}{2}}
=v(r, r', y, y')\big|r^{d-1}r'^{d-1}drdr'dydy'\big|^{\frac{1}{2}}.\]

The scattering half-density 
$|r^{d-1}r'^{d-1}drdr'dydy'|^{\frac{1}{2}}$
is a bounded nonzero multiple of the Riemannian half-density. 
We will usually consider the resolvent $P^{-1}$ as acting on Riemannian half-densities, in which case the kernel of $P^{-1}$ itself is a Riemannian (distributional) half-density on the blown-up space. 
However, when we study the properties of a kernel near the $\zf$-face, we write it as a $b$-half-density; this is more natural in view of the fact that we use the $b$-calculus near $\zf$. 

%%%%%%%%%%%%%%%%%%%%%%%%%%%%%%%%
%%%%%%%%%%%%%%%%%%%%%%%%%%%%%%%%
%%%%%%%%%%%%%%%%%%%%%%%%%%%%%%%%

\section{Resolvent Construction}
\label{chapter5}

\subsection{The operator $H$}
Let $M$ be the metric cone over $(Y, h)$. 
The Laplacian on the cone $M$ expressed in polar coordinates is 
\begin{equation*}
-\partial^2_r-\frac{d-1}{r}\partial_r+\frac{1}{r^2}\Delta_Y,
\end{equation*}
where $\Delta_Y$ is the Laplacian on $Y$. This operator is positive and symmetric on the domain $C_c^\infty(Y \times (0, \infty))$, i.e. smooth functions supported away from the cone tip.
The operator $\Delta$ is defined to be the Friedrichs extension of this symmetric operator. 

For any function $V_0: Y\rightarrow\mathbb{C}$, we define the operator 
\[H_{V_0}=\Delta+\frac{V_0(y)}{r^2}.\]
This is a natural class of operators: as both $\Delta$ and $\frac{V_0(y)}{r^2}$ are homogeneous of degree $-2$, 
the operator $H_{V_0}$ has the same homogeneity. 
For simplicity of notation, we write $H_{V_0}$ simply as $H$. 
The following proposition tells us for which $V_0$ is the operator $H$ positive. 

\begin{proposition}\label{positive}
Suppose that $\Delta_Y+V_0(y)+(\frac{d-2}{2})^2$ is a positive operator on $L^2(Y)$. Then the operator $H$ is also positive.
\end{proposition}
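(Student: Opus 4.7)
The plan is to reduce positivity of $H$ on the cone to positivity of a family of one-dimensional Schrödinger operators on the half-line via separation of variables, and then apply the classical Hardy inequality. Since $H$ is the Friedrichs extension of its restriction to $C_c^\infty(Y\times(0,\infty))$, it suffices to verify that $\langle Hu,u\rangle_{L^2(M)}\ge 0$ for all $u\in C_c^\infty(Y\times(0,\infty))$.

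First I would remove the conic weight by conjugating with the unitary map
\[
U:L^2(M, r^{d-1}\,dr\,dy)\longrightarrow L^2((0,\infty)_r\times Y, dr\,dy), \qquad Uf = r^{(d-1)/2}f.
\]
A direct computation starting from \eqref{laplaciandefinition} gives
\[
U\Delta U^{-1} = -\partial_r^2 + \frac{1}{r^2}\Bigl[\Delta_Y + \Bigl(\tfrac{d-2}{2}\Bigr)^2 - \tfrac14\Bigr],
\]
so if we denote $A := \Delta_Y + V_0 + \bigl(\tfrac{d-2}{2}\bigr)^2$, which is the positive operator of the hypothesis, then
\[
\tilde H := UHU^{-1} = -\partial_r^2 + \frac{A - \tfrac14}{r^2}.
\]

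Second, I would exploit the fact that $A$ is a self-adjoint elliptic operator on the closed manifold $Y$, hence has discrete spectrum $0<\mu_0^2\le \mu_1^2\le\cdots$ with a complete orthonormal eigenbasis $\{\phi_k\}$. Decomposing any $v\in C_c^\infty((0,\infty)\times Y)$ as $v(r,y)=\sum_k v_k(r)\phi_k(y)$, the operator $\tilde H$ decouples into the family
\[
L_k := -\partial_r^2 + \frac{\mu_k^2 - \tfrac14}{r^2} \quad \text{on } L^2((0,\infty),dr),
\]
and $\langle \tilde H v, v\rangle = \sum_k \langle L_k v_k, v_k\rangle$.

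Finally I would apply Hardy's inequality in the form
\[
\int_0^\infty |w'(r)|^2\,dr \;\ge\; \frac{1}{4}\int_0^\infty \frac{|w(r)|^2}{r^2}\,dr \qquad \forall\, w\in C_c^\infty(0,\infty),
\]
which yields
\[
\langle L_k v_k, v_k\rangle = \int_0^\infty |v_k'|^2\,dr + (\mu_k^2 - \tfrac14)\int_0^\infty \frac{|v_k|^2}{r^2}\,dr \;\ge\; \mu_k^2 \int_0^\infty \frac{|v_k|^2}{r^2}\,dr \;\ge\; 0,
\]
since $\mu_k^2 \ge \mu_0^2 > 0$. Summing over $k$ gives $\langle \tilde H v, v\rangle\ge 0$, and unitary invariance then gives $\langle Hu,u\rangle\ge 0$, proving positivity of $H$.

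The only real subtlety is the first step: one must check that the conjugation formula and the mode-by-mode decomposition are actually valid for functions in $C_c^\infty(Y\times(0,\infty))$, and that this dense subspace suffices to conclude positivity of the Friedrichs extension. Both are standard but worth a brief remark; the eigenfunction expansion converges in $C^\infty$ on compact subsets of $(0,\infty)\times Y$ by ellipticity of $A$, so the formal manipulations above are rigorously justified.
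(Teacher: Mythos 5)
Your proof is correct, but it takes a genuinely different route from the paper's. The paper conjugates by $r^{d/2}$, mapping into $L^2(r^{-1}\,dr\,dy)$, and then substitutes $s = \ln r$ to obtain
\[
UHU^{-1} = e^{-s}\Bigl(-\partial_s^2 + \Delta_Y + V_0 + \bigl(\tfrac{d-2}{2}\bigr)^2\Bigr)e^{-s},
\]
a factored form in which positivity is immediate: pairing with $f$ and writing $g=e^{-s}f$ gives $\|\partial_s g\|^2 + \langle A g, g\rangle \ge 0$, with no further ingredient needed. You instead conjugate by $r^{(d-1)/2}$, landing in the flat $L^2(dr\,dy)$, which produces the extra $-\tfrac14 r^{-2}$ term; you then decompose by eigenfunctions of $A$ and invoke the half-line Hardy inequality, whose constant $\tfrac14$ exactly cancels that $-\tfrac14$, reducing to $\mu_k^2>0$. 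Both routes are standard and rigorous. The paper's is arguably slicker since it sidesteps Hardy and the mode decomposition entirely, while yours makes the mechanism more transparent (the marginal Hardy constant is precisely what allows $V_0$ to be ``a bit negative'') and is closer to the classical Friedrichs-extension argument for inverse-square potentials. Your closing remark about justifying the eigenfunction expansion on $C_c^\infty(Y\times(0,\infty))$ is appropriate; it is indeed the one point where a detail-checker would want a word, and it is standard as you say.
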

\begin{proof}
We work in polar coordinates, 
and consider the isometry $U: L^2(M; r^{d-1}drdy)\rightarrow L^2(M; r^{-1}drdy)$ defined by
\begin{equation}\label{Udefinition}
Uf=r^{\frac{d}{2}}f.
\end{equation}
Now for $f\in L^2(M; r^{-1}drdy)$, we compute
\begin{equation*}
\begin{split}
U HU^{-1}f
=r^{\frac{d}{2}}\bigg(-\partial^2_r-\frac{d-1}{r}\partial_r+\frac{1}{r^2}\Delta_Y+\frac{V_0(y)}{r^2}\bigg)r^{-\frac{d}{2}}f.\\
=\bigg(\frac{d(d-4)}{4}+V_0(y)\bigg)\frac{1}{r^2}f+\frac{1}{r}\partial_rf-\partial_r^2f+\frac{1}{r^2}\Delta_Yf.
\end{split}
\end{equation*}
Therefore we have,
\begin{equation*}
\begin{split}
&\Bigg(\frac{1}{r}\bigg(-(r\partial r)^2+\Delta_{Y}+\Big(\frac{d-2}{2}\Big)^2+V_0(y)\bigg)\frac{1}{r}\Bigg)f\\
=&-\partial_r\Big(r\partial_r(\frac{1}{r}f)\Big)+\frac{1}{r^2}\Delta_Yf+\bigg(\Big(\frac{d-2}{2}\Big)^2+V_0(y)\bigg)\frac{1}{r^2}f\\
=&\partial_r(\frac{1}{r}f)-\partial_r^2f+\frac{1}{r^2}\Delta_Yf+\bigg(\Big(\frac{d-2}{2}\Big)^2+V_0(y)\bigg)\frac{1}{r^2}f\\
=&-\frac{1}{r^2}f+\frac{1}{r}\partial_r f-\partial_r^2f+\frac{1}{r^2}\Delta_Yf+\bigg(\Big(\frac{d-2}{2}\Big)^2+V_0(y)\bigg)\frac{1}{r^2}f\\
=&\bigg(\frac{d(d-4)}{4}+V_0(y)\bigg)\frac{1}{r^2}f+\frac{1}{r}\partial_r f-\partial_r^2f+\frac{1}{r^2}\Delta_Yf\\
=&UHU^{-1}f.\\
\end{split}
\end{equation*}
We have established
\begin{equation}\label{Utransformation}
UHU^{-1}=\frac{1}{r}\bigg(-(r\partial_r)^2+\Delta_Y+V_0(y)+\Big(\frac{d-2}{2}\Big)^2\bigg)\frac{1}{r}.
\end{equation}
Make a substitution $s=\ln r$, then the space $L^2(M; r^{-1}drdy)$ becomes $L^2(M; dsdy)$, 
and we have
\[UHU^{-1}=e^{-s}\bigg(-\partial_s^2+\Delta_Y+V_0(y)+\Big(\frac{d-2}{2}\Big)^2\bigg)e^{-s}.\]
From here we can clearly see that the operator $H$ is positive if $\Delta_Y+V_0(y)+(\frac{d-2}{2})^2>0$. 
This completes the proof.
\end{proof}

\begin{remark}
Note that as we have $d\geq 3$, 
the condition $\Delta_Y+V_0(y)+(\frac{d-2}{2})^2>0$ means that 
the potential $V=\frac{V_0}{r^2}$ is allowed to be ``a bit negative''.
\end{remark}

\subsection{The Riesz transform $T$}
Suppose we have a function $V_0$ on $Y$ 
which satisfies $\Delta_Y+V_0(y)+(\frac{d-2}{2})^2>0$. Then 
the Riesz transform $T$ with the inverse square potential $V=\frac{V_0}{r^2}$ is defined to be
\[T=\nabla H^{-\frac{1}{2}},\]
where the size of the derivatives are measured using the cone metric $g$, i.e.\ derivatives of bounded length are given by $(\partial_r, r^{-1} \partial_{y_i})$. 

Our aim is to find out the precise range of $p$ for which the Riesz transform $T$ is bounded on $L^p(M)$. Following \cite{CCH} and  \cite{GH}, we do this using a `resolvent approach' as opposed to the more common `heat kernel approach'. 
Using functional calculus, we have the following expression,
\[T=\frac{2}{\pi}\int_0^\infty \nabla(H+\lambda^2)^{-1}d\lambda.\]
We see from this equation that in order to understand $T$, 
we need to know the properties of $(H+\lambda^2)^{-1}$. 
Because $H$ is homogeneous of degree $-2$, we only need to compute $(H+1)^{-1}$, then use scaling.
%The relation between $(H+\lambda^2)^{-1}$ and $(H+1)^{-1}$ is 
%discussed later on and used to prove Proposition \ref{esimateonT}.
Let $P=H+1$; we will proceed to study $P^{-1}$.

\subsection{A formula for the resolvent}
\label{formula}
We now proceed to find an explicit formula for $P^{-1}$. 
However as we will discuss later, the formula has good convergence properties in only certain regions of the blown-up space.
From Equation (\ref{Utransformation})  we have 
\begin{equation*}
P=H+1=r^{-\frac{d}{2}-1}\big(-(r\partial_r)^2+\Delta_Y+V_0(y)+r^2+\Big(\frac{d-2}{2}\Big)^2\big)r^{\frac{d}{2}-1}.
\end{equation*}
Let $P'$ denote the differential operator consisting  of the terms in the middle. That is, 
\begin{equation}\label{Ptildedefinition}
P'=-(r\partial_r)^2+\Delta_Y+V_0(y)+r^2+\Big(\frac{d-2}{2}\Big)^2.
\end{equation}
We take $P'$ to act on half-densities, using the flat connection that annihilates the Riemannian half-density $|r^{d-1} dr dh|^{1/2}$ on $M$. Now let $\tilde P$ be the differential operator given by the same expression \eqref{Ptildedefinition}, but endowed with the flat connection on half-densities annihilating the $b$-half density $|dr/r dh|^{1/2}$. Since $U$ maps this $b$-half density to the Riemannian half-density, these two differential operators are related by 
\begin{equation}
\tilde P = U^{-1} P' U.
\end{equation} 
Therefore, 
\begin{equation}\label{pptilde}
P=r^{-1}\tilde{P}r^{-1}. 
\end{equation}
Since $P$ is self-adjoint, 
Equation (\ref{pptilde}) shows that $\tilde{P}$ 
is also self-adjoint. (Note that for operators on half-densities there is an invariant notion of self-adjointness, since the inner product on half-densities is invariantly defined.) 
Denote $G=P^{-1}$, $\tilde{G}=\tilde{P}^{-1}$; the Schwartz kernels of $G$ and $\tilde{G}$ are related by
\begin{equation}\label{GGtilde}
G=  r r'\tilde{G}.
\end{equation}
(Again, we emphasize that this is an identity involving half-densities: if we write $G = K |(rr')^{d-1} dr dr' dh dh'|^{1/2}$ and $\tilde G = \tilde K |(rr')^{-1} dr dr' dhdh'|^{1/2}$ then we have 
\begin{equation}\label{KKtilde}
K=  (r r')^{1-d/2}\tilde{K}.)
\end{equation}
So we just need to determine $\tilde{G}$, then Equation (\ref{GGtilde}) gives us $G$. 

We now proceed to work out an expression for $\tilde{G}$. 
Let $(\mu_j^2, u_j)$ be the eigenvalues and corresponding $L^2$-normalized eigenfunctions of 
the positive operator $\Delta_Y+V_0(y)+(\frac{d-2}{2})^2$. 
We also let $\Pi_j$ denote the projection onto the $u_j$-eigenspace. 
Then we have
\begin{equation}\label{ptilde}
\tilde{P}=\sum_j\Pi_j \tilde{T}_j,
\end{equation}
and 
\[ \Id=\sum_{j}\delta(\frac{r}{r'}-1)\Pi_j,\]
where 
\begin{equation}\label{ttilde}
\tilde{T}_j=-(r\partial_r)^2+r^2+\mu_j^2=-r^2\partial^2_r-r\partial_r+\mu_j^2.
\end{equation} 

As in \cite{GH}, the kernel of the inverse of $\tilde{T}_j$ is written in terms of  modified Bessel functions $I_{\mu_j}(r)$ and $K_{\mu_j}(r)$ (see \cite[Sec. 9.6]{AS}) in the form 
\[\tilde{T}^{-1}_j(r,r')=
\begin{cases}
I_{\mu_j}(r)K_{\mu_j}(r')\big|\frac{dr}{r}\frac{dr'}{r'}\big|^{\frac{1}{2}},\hspace{3mm}r<r',\\
K_{\mu_j}(r)I_{\mu_j}(r')\big|\frac{dr}{r}\frac{dr'}{r'}\big|^{\frac{1}{2}},\hspace{3mm}r>r',
\end{cases}\]
We know that
\[\tilde{G}=\sum_j\Pi_j \tilde{T}^{-1}_j,\]
hence in terms of the kernels, we have
\begin{equation}\label{infiniteseries}
\tilde{G}(r,r', y, y')= 
\begin{cases}
\sum_j u_j(y)\overline{u_j(y')}I_{\mu_j}(r)K_{\mu_j}(r')\big|\frac{dr}{r}\frac{dr'}{r'}dhdh'\big|^{\frac{1}{2}},\hspace{3mm}r<r',\\
\sum_j u_j(y)\overline{u_j(y')}K_{\mu_j}(r)I_{\mu_j}(r')\big|\frac{dr}{r}\frac{dr'}{r'}dhdh'\big|^{\frac{1}{2}},\hspace{3mm}r>r',
\end{cases}
\end{equation}
where $dh$ denotes the Riemannian density with respect to the metric on $Y$. 
While this is an exact expression for $\tilde G$, it is not a very useful expression near the diagonal, as it has poor convergence properties. Therefore we shall glue it together with a pseudodifferential-type parametrix in order to determine its properties close to the diagonal. However, sufficiently far from the diagonal, the series has very good convergence. We proceed to show this. 

\subsection{Convergence of the formula}
By the symmetry of \eqref{infiniteseries}, it suffices to consider the region  $\{ r< r' \}$; here we work with the sum, 
\begin{equation}
\sum_j u_j(y)\overline{u_j(y')}I_{\mu_j}(r)K_{\mu_j}(r').
\label{convsum}\end{equation}
From \cite[Sec. 9.6]{AS}, we have representations 
\[I_{\mu}(r)=\frac{2^{-\mu}r^{\mu}}{\pi^{\frac{1}{2}}\Gamma(\mu+\frac{1}{2})}\int_{-1}^1(1-t^2)^{\mu-\frac{1}{2}}e^{-rt}dt,\]
and
\[K_{\mu}(r')=\frac{\pi^{\frac{1}{2}}2^{-\mu}r'^{\mu}}{\Gamma(\mu+\frac{1}{2})}\int_1^\infty e^{-r't}(t^2-1)^{\mu-\frac{1}{2}}dt.\]

We now estimate each of these integrals in a way that is uniform as $\mu \to \infty$. When $r \leq 1$, the integral in the expression for $I_\mu$ is uniformly bounded in $\mu > 0$, and hence we see that 
\begin{equation}
 \big| I_{\mu}(r) \big| \leq C \frac{2^{-\mu} r^\mu}{\Gamma(\mu + 1/2)} \text{ when } r \leq 1,
\label{Ismall}\end{equation}
where $C$ is independent of $r$ and $\mu$. On the other hand, for $r \geq 1$, we estimate $e^{-r} r^{1/2}  I_\mu(r)$:
\begin{align*}
e^{-r} r^{1/2}  I_\mu(r) &= \frac{2^{-\mu}r^{\mu + 1/2}}{\pi^{\frac{1}{2}}\Gamma(\mu+\frac{1}{2})}\int_{-1}^1(1-t^2)^{\mu-\frac{1}{2}}e^{-r(t+1)}dt \\
&\leq C \frac{2^{-\mu}r^{\mu + 1/2}}{\Gamma(\mu+\frac{1}{2})} \int_{-1}^{1} e^{-r(t+1)} \, dt \\
&\leq C \frac{2^{-\mu}r^{\mu + 1/2}}{\Gamma(\mu+\frac{1}{2})} \int_0^{2r} e^{-t} \, \frac{dt}{r} \\
&\leq C \frac{2^{-\mu}r^{\mu - 1/2}}{\Gamma(\mu+\frac{1}{2})}
\end{align*}
with $C$ independent of $\mu$, for $\mu \geq 1/2$. 
 This gives rise to an estimate of the form 
\begin{equation}
\big| I_{\mu}(r) \big| \leq C     \frac{2^{-\mu } r^{\mu - 1}e^{r}}{\Gamma(\mu + 1/2)}  \text{ when } r \geq 1. 
\label{Ilarge}\end{equation}

We next estimate $K_\mu$ in a similar way. For $r \leq 1$, we estimate 
\begin{equation}\begin{aligned}
K_{\mu}(r)&=\frac{\pi^{\frac{1}{2}}2^{-\mu}r^{\mu}}{\Gamma(\mu+\frac{1}{2})}\int_1^\infty e^{-rt}(t^2-1)^{\mu-\frac{1}{2}}\, dt \\
&= \frac{\pi^{\frac{1}{2}}2^{-\mu}r^{\mu}}{\Gamma(\mu+\frac{1}{2})}\int_0^\infty e^{-rt}t^{2\mu-1}\, dt \\
& \leq C \frac{2^{-\mu}r^{\mu}}{\Gamma(\mu+\frac{1}{2})}\int_0^\infty e^{-t}t^{2\mu-1} r^{-2\mu}\, dt \\
&= C \frac{2^{-\mu}r^{-\mu} \Gamma(2\mu)}{\Gamma(\mu+\frac{1}{2})}.
\end{aligned}\label{Ksmall}\end{equation}
On the other hand, for $r \geq 1$, we compute 
\begin{align*}
e^{r} r^{1/2}  K_\mu(r) &= \frac{\pi^{\frac{1}{2}}2^{-\mu}r^{\mu + 1/2}}{\Gamma(\mu+\frac{1}{2})}\int_1^\infty e^{-r(t-1)}(t^2-1)^{\mu-\frac{1}{2}}\, dt \\
&= \frac{\pi^{\frac{1}{2}}2^{-\mu}r^{\mu + 1/2}}{\Gamma(\mu+\frac{1}{2})}\int_0^\infty e^{-t}(t(2r+t))^{\mu-\frac{1}{2}} r^{-2\mu}\, dt \\
\end{align*}
where we made a substitution $t \to r(t-1)$ in the integral. We now estimate 
$$
(2r+t)^{\mu-\frac{1}{2}} \leq 2^{\mu - 1/2} \max \Big( (2r)^{\mu - 1/2},  t^{\mu - 1/2} \Big)
$$
which gives rise to an estimate 
\begin{equation}
\Big|   K_\mu(r)  \Big| \leq C  \frac{e^{-r}  r^{-\mu}}{\Gamma(\mu+\frac{1}{2})} \max \Big( (2r)^{\mu - 1/2} \Gamma(\mu + 1/2), \Gamma(2\mu) \Big).
\label{Klarge}\end{equation}
Now to absorb the factor $r^{\mu - 1/2}$ in the first argument of the maximum function, we sacrifice half of our exponential decay: we estimate $e^{-r/2} r^{\mu - 1/2}$ by bounding it by the value where it achieves its maximum in $r$, which is when $r = 2\mu - 1$:
$$
e^{-r/2} r^{\mu - 1/2} \leq e^{-(2\mu - 1)/2} (2\mu - 1)^{\mu - 1/2} \leq C e^{-\mu} 2^\mu \mu^{(\mu - 1/2)} \leq C 
2^{\mu } \Gamma(\mu).
$$
Then we can use this in \eqref{Klarge} to estimate 
\begin{equation}
\Big|   K_\mu(r)  \Big| \leq C  \frac{e^{-r/2}  r^{-\mu }}{\Gamma(\mu+\frac{1}{2})} \max \Big( 2^{2\mu } \Gamma(\mu) \Gamma(\mu + 1/2), \Gamma(2\mu) \Big).
\label{Klarge2}\end{equation}
Finally using the identity (see \cite[6.1.18]{AS})
\begin{equation}
\Gamma(2\mu)=\frac{2^{2\mu-1}}{\sqrt{\pi}}\Gamma(\mu)\Gamma(\mu+\frac{1}{2})
\label{Gidentity}\end{equation}
we obtain 
\begin{equation}
\Big|   K_\mu(r)  \Big| \leq C  e^{-r/2}  r^{-\mu }  2^{2\mu } \Gamma(\mu) .
\label{Klarge3}\end{equation}

Hence, when $r' \geq 4r$,  $I_{\mu}(r)K_{\mu}(r')$ is bounded above by
\begin{equation}
\begin{cases}
C  \big( \frac{r}{r'} \big)^\mu  \frac{ 2^{-2\mu} \Gamma(2\mu)}{\big(\Gamma(\mu + 1/2)\big)^2 }, \quad 0 \leq r \leq r' \leq 1 \\
  C 2^{\mu } \big( \frac{r}{r'} \big)^\mu e^{-r'/2} 
    \frac{ \Gamma(\mu)}{\Gamma(\mu + 1/2)},
  \quad r \leq 1 \leq r'  \\
  C     2^{\mu } \big( \frac{r}{r'} \big)^{\mu } e^{-r'/4}  \frac{\Gamma(\mu)}{\Gamma(\mu + 1/2)}, \quad 1 \leq r \leq r' . 
 \end{cases}
\end{equation}
We emphasize that the constant $C$ is independent of $\mu \geq 1/2$, $r$ and $r'$ here. Noting that the combination of $\Gamma$ factors is uniformly bounded in each case (using \eqref{Gidentity} again), 
we find that for $r' \geq 4r$,  $I_{\mu}(r)K_{\mu}(r')$ is bounded above by
\begin{equation}
\begin{cases}
C  \big( \frac{r}{r'} \big)^\mu, \quad 0 \leq r \leq r' \leq 1 \\
  C  \big( \frac{2r}{r'} \big)^\mu e^{-r'/2},
  \quad r \leq 1 \leq r'  \\
  C    \big( \frac{2r}{r'} \big)^{\mu } e^{-r'/4}, \quad 1 \leq r \leq r' . 
 \end{cases}
\label{expvanishing}\end{equation}

By H{\" o}rmander's $L^\infty$-estimate, see \cite{LH2}, we know that $||u_j||_\infty\leq C\mu_j^{\frac{d-1}{2}}$. 
Therefore each term in the series is bounded above by $C\mu_j^{d-1}(\frac{2r}{r'})^{\mu_j}e^{-\frac{r'}{4}}$. 
To continue the discussion on convergence, we need the following lemma. 

\begin{lemma}\label{sumlemma}
Suppose that $\mu_j^2$ are the eigenvalues of $\Delta_Y+V_0(y)+(\frac{d-2}{2})^2$, 
then for any $0<\beta<1$, and any $M, N\geq 0$, the sum 
\[\sum_{\mu_j\geq M}\mu_j^N\alpha^{\mu_j-M}\]
converges for all $0<\alpha\leq\beta$, and it is bounded uniformly in $\alpha$.
\end{lemma}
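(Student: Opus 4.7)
My plan is to reduce the claim to a convergent geometric-type series via two standard tools: Weyl's law for the spectrum of the elliptic self-adjoint operator $\Delta_Y + V_0 + (\tfrac{d-2}{2})^2$ on the closed $(d-1)$-dimensional Riemannian manifold $Y$, together with a simple monotonicity observation that removes any dependence on $\alpha$.

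As a first step, since $V_0$ is smooth and $Y$ is compact, the operator $\Delta_Y + V_0 + (\tfrac{d-2}{2})^2$ is elliptic, self-adjoint, bounded below, and has purely discrete spectrum. Weyl's law therefore yields a constant $C$ (depending only on $(Y,h)$ and $V_0$) for which the counting function obeys
\[
N(t) := \#\{j : \mu_j \leq t\} \leq C(1+t)^{d-1}, \qquad t \geq 0.
\]

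For the second step, note that every term in the sum has $\mu_j - M \geq 0$, and since $0 < \alpha \leq \beta < 1$ we obtain $\alpha^{\mu_j - M} \leq \beta^{\mu_j - M}$. Hence it suffices to bound $\sum_{\mu_j \geq M} \mu_j^N \beta^{\mu_j - M}$, and any finite bound we obtain will automatically be uniform in $\alpha \in (0, \beta]$.

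The main step is then to organize this sum by grouping the eigenvalues according to the unit intervals $[M+k, M+k+1)$ for $k = 0, 1, 2, \dots$. By the Weyl estimate above, the number of $\mu_j$ in any such interval is at most $N(M+k+1) \leq C(M+k+2)^{d-1}$, while $\mu_j^N \leq (M+k+1)^N$ throughout the interval. Therefore
\[
\sum_{\mu_j \geq M} \mu_j^N \beta^{\mu_j - M} \leq C \sum_{k=0}^\infty (M+k+1)^N (M+k+2)^{d-1} \beta^k,
\]
and since $\beta < 1$ the right-hand side is a convergent series whose value depends only on $M, N, d$ and $\beta$. I do not anticipate any real obstacle: the argument is a direct application of Weyl's law combined with a geometric-series estimate.
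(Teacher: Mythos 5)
Your proof is correct, and it takes a cleaner route than the paper's. Both arguments rest on the same two observations: the replacement $\alpha^{\mu_j-M}\le\beta^{\mu_j-M}$, which makes uniformity in $\alpha$ immediate, and the Weyl-type counting bound $\#\{j:\mu_j\le t\}\lesssim t^{d-1}$ (the paper's equation \eqref{Weyl}). Where you differ is in how that counting bound is deployed. You group the eigenvalues into unit bands $[M+k,M+k+1)$, bound the number of eigenvalues in each band by the counting function, and arrive directly at $\sum_k (M+k+1)^N(M+k+2)^{d-1}\beta^k$, a polynomial-times-geometric series whose convergence is obvious. The paper instead makes a chain of ad hoc splits: first at $\mu_j=2M$ (to get $\mu_j-M\ge\mu_j/2$), then at a threshold $N_1$ chosen so that $j^N\le\beta^{-j/4}$, then it inverts the Weyl bound to get $\mu_j\gtrsim j^{1/(d-1)}$, then splits again at a threshold $N_2$ to compare with $\log_\gamma j$ and sum $j^{-2}$. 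Both paths land in the same place, but your dyadic-style (unit-interval) grouping is more systematic, has fewer moving parts, and transparently shows what controls the constant. The only very minor looseness in your version is that you bound the number of eigenvalues in $[M+k,M+k+1)$ by the full counting function $N(M+k+1)$ rather than by the increment $N(M+k+1)-N(M+k)$; this over-count is of course harmless since the resulting series still converges, but you could note it explicitly.
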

\begin{proof}
Note that for any $\mu_j\geq 2M$, we have 
\[\mu_j-M=M+(\mu_j-2M)\geq M+\Big(\frac{\mu_j-2M}{2}\Big)=\frac{\mu_j}{2}.\]
Therefore,
\[\sum_{\mu_j\geq 2M}\mu_j^N\alpha^{\mu_j-M}\leq\sum_{\mu_j\geq 2M} \mu_j^N\alpha^{\frac{\mu_j}{2}}\leq\sum_{\mu_j\geq 2M}\mu_j^N\beta^{\frac{\mu_j}{2}}.\]
There is an integer $N_1(\beta, N)>2M$ such that for all $j\geq N_1(\beta, N)$, $j^N\leq\beta^{-\frac{j}{4}}$. It follows that
\begin{equation}
\begin{split}
\sum_{\mu_j\geq 2M}\mu_j^N\beta^{\frac{\mu_j}{2}}
&\leq\sum_{2M\leq\mu_j<N_1(\beta, N)}\mu_j^N\beta^{\frac{\mu_j}{2}}
+\sum_{\mu_j\geq N_1(\beta, N)}\beta^{-\frac{\mu_j}{4}}\beta^{\frac{\mu_j}{2}}\\
&\leq\big|\{j: \mu_j<N_1(\beta, N)\}\big|N_1(\beta, N)^N
+\sum_{\mu_j\geq N_1(\beta, N)}\beta^{\frac{\mu_j}{4}}\\
&\leq CN_1(\beta, N)^{d+N-1}
+\sum_{\mu_j\geq N_1(\beta, N)}\beta^{\frac{\mu_j}{4}},
\end{split}
\end{equation}
where the constant $C>0$ comes from the Weyl's estimate, which states that for any $\mu>1$, we have
\begin{equation}\label{Weyl}
\big|\{j: \mu_j\leq\mu\}\big|\leq C\mu^{d-1}.
\end{equation}
We continue to estimate the part of summation greater than $N_1(\beta, N)$. 
An implication of (\ref{Weyl}) is, for any $j\in\mathbb{N}$, we have 
\[\mu_j\geq\Big(\frac{j}{C}\Big)^{\frac{1}{d-1}}.\]
Therefore,
\[\sum_{\mu_j\geq N_1(\beta, N)}\beta^{\frac{\mu_j}{4}}\leq\sum_{\mu_j\geq N_1(\beta, N)}\beta^{\frac{1}{4}(\frac{j}{C})^{\frac{1}{d-1}}}
\leq\sum_{j\geq 0}\beta^{\frac{1}{4}(\frac{j}{C})^{\frac{1}{d-1}}}\]
There is $N_2(\beta, C)\in\mathbb{N}$ such that for all $j\geq N_2(\beta, C)$, 
we have $\frac{1}{4}(\frac{j}{C})^{\frac{1}{d-1}}>\log_\gamma j$, where $\gamma=\beta^{-\frac{1}{2}}>1$. 
Then
\begin{equation}
\begin{split}
\sum_{j\geq 1}\beta^{\frac{1}{4}(\frac{j}{C})^{\frac{1}{d-1}}}
&\leq\sum_{0 \leq j<N_2(\beta, C)}\beta^{\frac{1}{4}(\frac{j}{C})^{\frac{1}{d-1}}}+\sum_{j\geq N_2(\beta, C)}\beta^{\log_\gamma j}\\
&\leq N_2(\beta, C)+\sum_{j\geq N_2(\beta, C)}j^{-2}\\
&\leq N_2(\beta, C)+\frac{\pi^2}{6}.\\
\end{split}
\end{equation}
The remaining part of the summation is from $M$ to $2M$, 
\[\sum_{M\leq\mu_j<2M}\mu_j^N\alpha^{\mu_j-M}\leq\big|\{j: \mu_j<2M\}\big|(2M)^N
\leq C(2M)^{d-1}(2M)^N=C(2M)^{d+N-1}.\]
Bringing all the parts together, we have
\begin{equation}
\begin{split}
\sum_{\mu_j\geq M}\mu_j^N\alpha^{\mu_j-M}
\leq C(2M)^{d+N-1}+CN_1(\beta, N)^{d+N-1}+N_2(\beta, C)+\frac{\pi^2}{6}<\infty.
\end{split}
\end{equation}
Note the finite constant depends on $M, N, C, \beta$ but not $\alpha$, therefore we have uniform boundedness in $\alpha$.
\end{proof}

\begin{figure}[h!]
\centering
\includegraphics[scale=0.35]{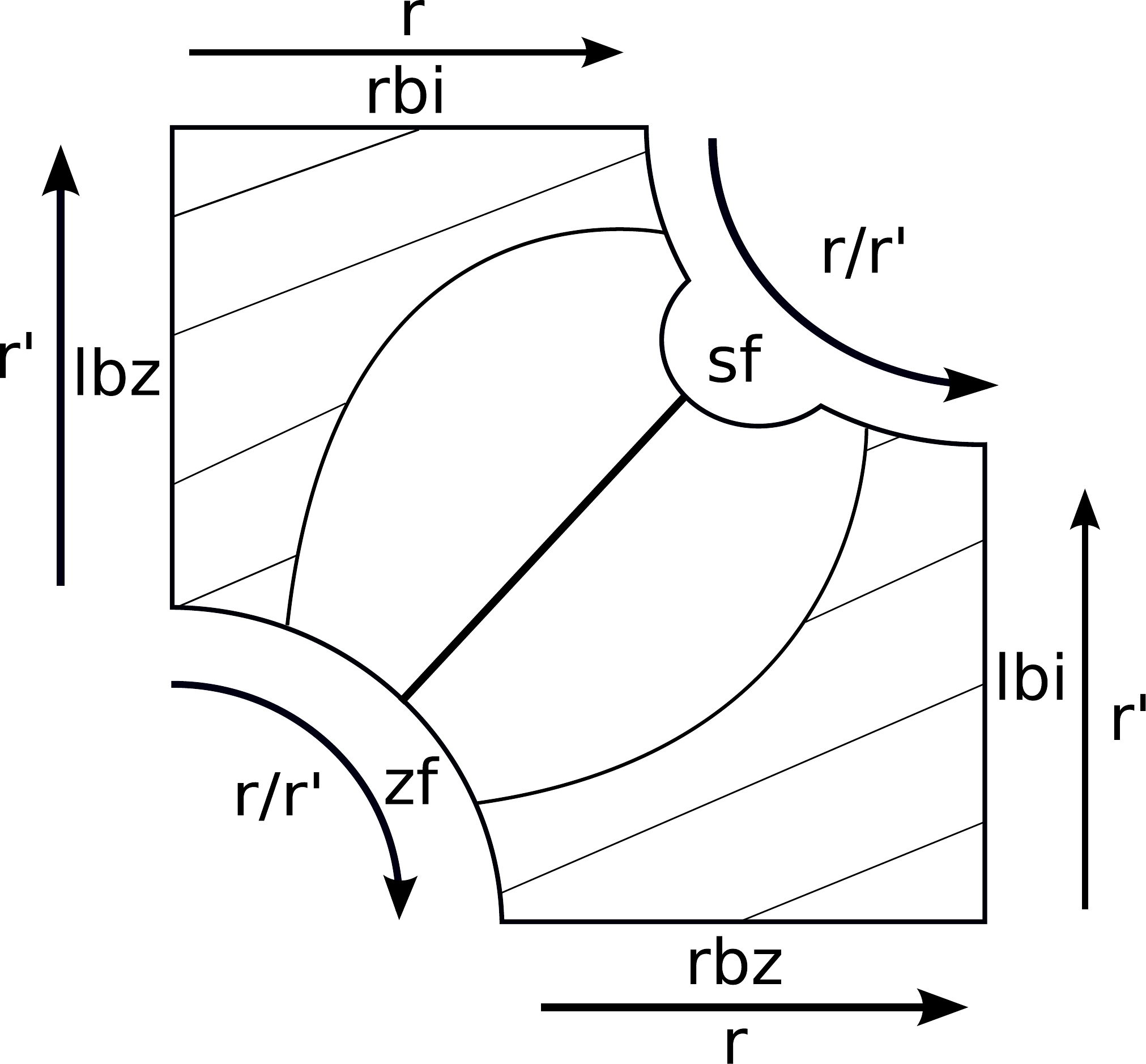}
\caption{Support of $\tilde{G}_f$}
\label{supportg0}
\end{figure}

\begin{proposition}\label{prop:phg}
The expansion \eqref{convsum} is polyhomogeneous conormal at $\lbz$. 
\end{proposition}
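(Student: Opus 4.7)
The plan is to substitute the Taylor expansion
\[ I_\mu(r) = \sum_{k=0}^\infty \frac{(r/2)^{\mu+2k}}{k!\, \Gamma(\mu+k+1)} \]
into \eqref{convsum} and rearrange as a double sum whose $(j,k)$-th term has $r$-exponent $\mu_j + 2k$. I will argue that this formal series is the polyhomogeneous conormal expansion of \eqref{convsum} at $\lbz$, with index set
\[ E_{\lbz} = \{ (\mu_j + 2k, 0) : j \geq 0,\, k \geq 0 \} \]
and no logarithmic terms. Weyl's estimate \eqref{Weyl} guarantees that $E_{\lbz}$ is a valid index set, since for every $N$ only finitely many pairs $(j,k)$ satisfy $\mu_j + 2k < N$.

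Fix $B > 0$ and split the double sum into $S_B$, consisting of the finitely many terms with $\mu_j + 2k \leq B$, and a remainder $R_B$. The finite piece $S_B$ is manifestly of the required polyhomogeneous form, its coefficients being smooth functions of $(r', y, y')$ for $r' > 0$. To control $R_B$, I decompose it further into (a) the subsum over $\mu_j > B$, and (b) the subsum over $\mu_j \leq B$ with $\mu_j + 2k > B$. For (a), estimate \eqref{expvanishing} combined with H\"{o}rmander's eigenfunction bound $\|u_j\|_\infty \leq C\mu_j^{(d-1)/2}$ dominates each term by $C\mu_j^{d-1}(2r/r')^{\mu_j}$ times an exponential decay factor in $r'$. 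Choosing $0 < \epsilon < \mu_* - B$, where $\mu_*$ is the smallest $\mu_j$ exceeding $B$, and restricting to the region where $2r/r' \leq \beta < 1$, Lemma \ref{sumlemma} applied with $\alpha = 2r/r'$, $M = B$, $N = d-1$ gives $\sum_{\mu_j > B} \mu_j^{d-1}(2r/r')^{\mu_j - B - \epsilon} \leq C$ uniformly, so subsum (a) is $O((r/r')^{B+\epsilon})$. For (b) only finitely many $\mu_j$ appear, and the Taylor-remainder of $I_{\mu_j}(r)$ beyond order $\mu_j + 2K$ is itself a convergent power series in $r^2$ starting at $r^{\mu_j + 2K}$, hence $O(r^{B+\epsilon})$ for a suitable $\epsilon > 0$.

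To upgrade these pointwise bounds to genuine conormal estimates at $\lbz$, I differentiate the series termwise by the vector fields tangent to $\lbz$, namely $r\partial_r$, $\partial_{y_i}$, $\partial_{y'_i}$, $\partial_{r'}$ (and $r'\partial_{r'}$ near the corner $\lbz\cap\zf$). Each such differentiation produces at worst polynomial growth in $\mu_j$: elliptic regularity on $Y$ gives $\|\partial^\alpha u_j\|_\infty \lesssim \mu_j^{(d-1)/2+|\alpha|}$; applying $r\partial_r$ to $r^{\mu_j+2k}$ produces a factor $\mu_j + 2k$; and the Bessel recurrence $\partial_{r'}K_{\mu_j}(r') = -\tfrac{1}{2}(K_{\mu_j - 1}(r') + K_{\mu_j + 1}(r'))$ shows that each $\partial_{r'}$ shifts the Bessel index by one while preserving the structure of the bounds \eqref{Ksmall} and \eqref{Klarge3} up to a polynomial factor. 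All such polynomial weights $\mu_j^N$ are absorbed by Lemma \ref{sumlemma}, which permits arbitrary polynomial growth in $\mu_j$.

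The main technical difficulty I anticipate is maintaining uniformity of these estimates as one approaches the corners of $\lbz$ with $\zf$ (where $r' \to 0$) and with $\rbi$ (where $r' \to \infty$), since the bounds \eqref{expvanishing} switch between their three regimes and since the boundary defining function for $\lbz$ is effectively $r/r'$ near $\zf$ but $r$ near $\rbi$. This is handled by separate choices of cutoff and of $\epsilon$ in each of the three regimes, but in every case the same strategy of combining \eqref{expvanishing} with Lemma \ref{sumlemma} applies.
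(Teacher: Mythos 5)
Your argument follows the same strategy as the paper's proof: expand the Bessel functions at $r=0$ to get the individual polyhomogeneous terms, control the tail of the $j$-sum via Lemma~\ref{sumlemma} together with H\"ormander's eigenfunction bound (with the harmless extra polynomial factors in $\mu_j$ produced by differentiation absorbed by the same lemma), and treat the derivative estimates for $I_\mu$, $K_\mu$ and $u_j$ accordingly. You simply make explicit several things the paper leaves terse --- the index set, the finite/remainder splitting, and the Bessel recurrence --- with only a minor slip in the application of Lemma~\ref{sumlemma} (one should take $M=B+\epsilon$ rather than $M=B$ to account for the shift by $\epsilon$ in the exponent).
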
 

\begin{proof}
Since the functions $I_\mu(r)$ and $K_\mu(r)$ have expansions in powers at $r = 0$ (including logarithms in the case of $K_\mu$ when $\mu$ is an integer), the individual terms in the series are polyhomogeneous conormal. So consider the tail of the series. Lemma~\ref{sumlemma} implies that the sum of the tail of the series, that is over $\mu_j \geq M$ is bounded by  $Cr^M e^{-r'/4}$ for small $r$. We can apply the same argument to derivatives of the series. In fact, the derivatives of $I_\mu$ and $K_\mu$ can be treated as above, showing that for $r \leq 1$, $\mu^{-k} (r\partial_r)^k I_\mu$ and $\mu^{-k} (r\partial_r)^k K_\mu$, and for $r \geq 1$,  $\mu^{-k} \partial_r^k I_\mu$ and $\mu^{-k} \partial_r^k K_\mu$ satisfy the same estimates as $I_\mu$ and $K_\mu$. Moreover, 
we have a  H\"ormander estimate $\| \nabla^{(k)} u_j \|_{\infty } \leq C_k \mu_j^{(d-1)/2 + k}$ for derivatives of $u_k$. Thus derivatives only give us extra powers of $\mu$, which are harmless as  Lemma~\ref{sumlemma} applies with arbitrary powers of $\mu$. 
\end{proof}

Proposition~\ref{prop:phg} implies, in particular, that $\tilde G$ decays exponentially, with all its derivatives, as $r'\rightarrow\infty$, 
ie when approaching the boundary $\rbi${}. 
Similarly in the region $\frac{r}{r'}\geq 4$, as $r\rightarrow\infty$, ie when approaching $\lbi${}, the kernel is also exponentially decreasing.
Therefore we cut off $\tilde{G}$ to restrict it away from the $r=r'$ to obtain 
a well defined operator $\tilde{G}_f$ with the kernel
\begin{equation}\label{initial}
\tilde{G}_f(r, r', y, y')=\tilde{G}(r, r', y, y')\bigg(\chi\Big(\frac{4r}{r'}\Big)+\chi\Big(\frac{4r'}{r}\Big)\bigg).
\end{equation}
Here $\chi$ is a smooth cutoff function $\chi:[0,\infty)\rightarrow[0, 1]$ 
such that $\chi\big([0,1/2]\big)=1$ and $\chi\big([1,\infty)\big)=0$.
Thus the support of $\tilde{G}_f$ is contained in $\{ r/r' \leq 1/4 \} \cup \{ r/r' \geq 4 \}$,  as illustrated in Figure \ref{supportg0}. (The subscript `f' stands for `far from the diagonal'.)

At last, similar to (\ref{GGtilde}), we define
\begin{equation}\label{GG0tilde}
G_f= r r'  \tilde{G}_f.
\end{equation}

\subsection{Near Diagonal}\label{snd}

The formula obtained in the previous section doesn't converge near the diagonal, so in this 
section we construct an operator $G_{nd}$, which is good near the diagonal.
The subscript $nd$ means ``near diagonal''. 

Near the $\zf$-face we consider the $b$-elliptic operator $\tilde{P}$. 
In order to keep it away from the $\sf$-face, we multiply it with a cutoff function, so we consider $\tilde{P}\chi(r)$, 
where $\chi:[0, \infty)\rightarrow[0, 1]$ is a smooth cutoff  function as above. By the ellipticity of $\tilde{P}$ near the $\zf$-face, 
and by Proposition \ref{compacterror}, there is $\tilde{G}_{nd}^{zf}$ in the full $b$-calculus such that 
\begin{equation}\label{errorzf}
\tilde{P}\tilde{G}_{nd}^{zf}\chi(r)=\chi(r)+ \tilde E_{zf},
\end{equation}
where $\tilde E_{zf}$ is smooth across the diagonal and vanishes to first order at $\zf$ (as a $b$-half density). 
Let 
\begin{equation}G_{nd}^{zf}= r r' \tilde{G}_{nd}^{zf},
\label{Gzfdefinition}\end{equation}
then we have 
\[PG_{nd}^{zf}\chi(r)=\chi(r)+E_{zf}\]
where $E_{zf} = (r'/r) \tilde E_{zf}$ is smooth across the diagonal and vanishes to first order at $\zf$ as a $b$-half density.

Near the $\sf$-face the operator $P$ is elliptic in the scattering calculus. 
We multiply it with $1-\chi(r)$ to keep it away from the $zf$-face, ie we consider the operator $P\big(1-\chi(r)\big)$. 
Since $P\big(1-\chi(r)\big)$ is elliptic near the $\sf$-face, and its normal operator $\Delta_{\RR^n} + 1$ is invertible, 
by Proposition \ref{sc-invert}, there is $G_{nd}^{sf}$ in the scattering calculus such that 
\[PG_{nd}^{sf}\big(1-\chi(r)\big)=1-\chi(r)+E_{sf},\]
where the error term $E_{sf}$ is smooth across the diagonal and vanishes to infinite order at the boundary of the blown up space. 

Now we define $G_{nd}$ by 
\[G_{nd}=(G_{nd}^{zf}+G_{nd}^{sf})\Big( 1 - \chi\big( \frac{4r}{r'} \big) - \chi\big( \frac{4r'}{r} \big) \Big).\]
Then we have 
\[PG_{nd}=\Id+E_{nd},\]
where the error term $E_{nd}$ is smooth across the diagonal, and vanishes to first order at $\zf$ (as a $b$-half density) and to infinite order at all other boundary hypersurfaces.  We may assume that $G_{nd}$ is supported close to the union of the diagonal, $\zf$, and $\sf$. 

We now define our global parametrix to be 
\begin{equation}
G_a = G_f + G_{nd}.
\label{para}\end{equation}

\subsection{The indicial operator at $\zf$}
In this subsection we show that the leading behaviour of $G_a$ at $\zf$ agrees with that of $G_{nd}$. To do this, it suffices to show that  the indicial operator of $\tilde G$ agrees (at least for $r/r' < 1/4$ and $r/r' > 4$, where we have shown convergence of the series) with that of $\tilde G_{nd}^{zf}$. By Proposition~\ref{compacterror}, the indicial operator of $\tilde G_{nd}^{zf}$ is equal to $I_b(\tilde P)^{-1}$. Let us now determine this indicial operator 

The indicial operator of $\tilde{P}$ is
\[\ib\big(\tilde{P}\chi(r)\big)=-(r\partial_r)^2+\Delta_Y+V_0(y)+\Big(\frac{d-2}{2}\Big)^2.\]
Let $\mu_j^2, u_j, \Pi_j$ be the same as defined in Section \ref{formula}. Here, instead of (\ref{ptilde}) and (\ref{ttilde}) we have
\[\ib\big(\tilde{P}\chi(r)\big)=\sum_j\Pi_j S_j,\]
where
\begin{equation*}
S_j=-(r\partial_r)^2+\mu_j^2.
\end{equation*} 
Similar to Section \ref{formula}, then the kernel $S_j^{-1}$ is
\[S^{-1}_j(r,r')=
\begin{cases}
\frac{1}{2\mu_j}(\frac{r}{r'})^{\mu_j}\big|\frac{dr}{r}\frac{dr'}{r'}\big|^{\frac{1}{2}},\hspace{3mm}r<r',\\
\frac{1}{2\mu_j}(\frac{r'}{r})^{\mu_j}\big|\frac{dr}{r}\frac{dr'}{r'}\big|^{\frac{1}{2}},\hspace{3mm}r>r'.
\end{cases}\]
 Hence
\begin{equation}\label{ibgbsy}
\big(\ib(\tilde P)\big)^{-1}(s, y, y')=\begin{cases}
\frac{1}{2}\sum_j \frac{1}{\mu_j}u_j(y)\overline{u_j(y')}s^{+\mu_j}\big|\frac{ds}{s}dhdh'\big|^{\frac{1}{2}},\hspace{3mm}s>1,\\
\frac{1}{2}\sum_j \frac{1}{\mu_j}u_j(y)\overline{u_j(y')}s^{-\mu_j}\big|\frac{ds}{s}dhdh'\big|^{\frac{1}{2}},\hspace{3mm}s<1,
\end{cases} \quad s = \frac{r}{r'}. 
\end{equation}
The convergence of this sum can be analyzed using Lemma~\ref{sumlemma}: the sum converges smoothly for $s < 1$ and for $s >1$.

Now we determine the leading behaviour of $\tilde G$ at $\zf$. We only consider the case $\frac{r}{r'}\ < 1/4$, as the case $\frac{r}{r'} > 4$ is completely parallel.
Recall from expression (\ref{infiniteseries}), for $\frac{r}{r'} < 1/4$ we have, 
\[\tilde{G}(r, r', y, y')=\sum_j u_j(y)\overline{u_j(y')}I_{\mu_j}(r)K_{\mu_j}(r')  \bigg|\frac{dr}{r}\frac{dr'}{r'}dhdh'\bigg|^{\frac{1}{2}}
  .\]
We use the limiting forms for small arguments from \cite[Sec. 9.6]{AS}, that is when $r$, $r'\rightarrow 0$, 
\begin{equation}\label{Iexpansion}
I_{\mu_j}(r)=\frac{r^{\mu_j}}{2^{\mu_j}\Gamma(\mu_j+1)}+O(r^{\mu_j+2}),
\end{equation}
and
\begin{equation}\label{Kexpansion}
K_{\mu_j}(r')=\frac{2^{\mu_j-1}\Gamma(\mu_j)}{r'^{\mu_j}}+O({r'}^{-\mu_j+2} |\log r'|).
\end{equation}
Therefore, 
\begin{equation}\label{compatibility}
\tilde{G}_0(r, r', y, y')
=\frac{1}{2}\sum_j \frac{1}{\mu_j}u_j(y)\overline{u_j(y')}\Big(\frac{r}{r'}\Big)^{\mu_j}\bigg|\frac{dr}{r}\frac{dr'}{r'}dhdh'\bigg|^{\frac{1}{2}}
+O(r'^2 |\log r'|).
\end{equation}
Since $\frac{dr}{r}\frac{dr'}{r'}=\frac{ds}{s}\frac{dr'}{r'}$, expression (\ref{compatibility}) of $\tilde{G}(r, r', y, y')$ at $\zf$
is indeed consistent with 
expression (\ref{ibgbsy}) of $(\ib(\tilde P))^{-1}(s, y, y') = \ib(\tilde G_{nd}^{zf})$ when restricted to the $zf$-face. 

Finally, since the cutoff function used to define $G_f$ is $\chi(4r/r') + \chi(4r'/r)$, and that used to define $G_{nd}$ is $1 - \chi(4r/r') - \chi(4r'/r)$, we see that $G_a$ has the same leading asymptotic at $\zf$ as $\tilde G_{nd}^{zf}$, namely $I_b(\tilde P)^{-1}$.

\subsection{Construction of $P^{-1}$}
We have constructed an approximate inverse $G_a = G_f + G_{nd}$; let $E$ be the corresponding error term:
\[PG_a=\Id+E.\]
We next try to  solve away $E$ to obtain our final $G=P^{-1}$. 
We begin by summarising the properties of $G_a$ and $E$. 

\begin{proposition}\label{Gaproperties}
As a multiple of the Riemannian half-density $|r^{d-1}r'^{d-1}drdr'dh dh'|^{\frac{1}{2}}$ on the blown-up space,
the kernel $G_a$ is the sum of two terms. 
One is $G_{nd}$, supported where $1/8 \leq r/r' \leq 8$, 
and is such that 
$\rho_{\zf}^{d-2}G_{nd}$ is conormal of order $-2$ with respect to the diagonal uniformly up to both $\zf$ and $\sf$, 
where $\rho_{\zf}$ is any boundary defining function for $\zf$, and is rapidly decreasing at $\bfc$. 
The other term $G_f = G_a-G_{nd}$ satisfies:
\begin{enumerate}
\item[\rm (i)] it is smooth at the diagonal, and polyhomogeneous conormal at all boundary hypersurfaces;
\item[\rm (ii)] it vanishes to infinite order at $\lbi${}, $\rbi${} and $\bfc$;
\item[\rm (iii)] it vanishes to order $1-\frac{d}{2}+\mu_0$ at $\lbz${} and $\rbz${};
\item[\rm (iv)] it vanishes to order $2-d$ at $\zf$.
\end{enumerate}
\end{proposition}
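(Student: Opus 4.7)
The plan is to verify the stated properties for $G_{nd}$ and $G_f$ separately, using the constructions of Sections~\ref{formula}--\ref{snd}. The central bookkeeping issue is that the proposition expresses $G_a$ as a multiple of the Riemannian half-density $|r^{d-1}r'^{d-1}dr\, dr'\, dh\, dh'|^{1/2}$, whereas $\tilde G_{nd}^{zf}$ and $\tilde G_f$ are naturally built against the $b$-half-density $|r^{-1}r'^{-1}dr\, dr'\, dh\, dh'|^{1/2}$; the two representations differ by the factor $(rr')^{(d-2)/2}$, as recorded in \eqref{KKtilde}. By contrast, the scattering half-density coincides up to a smooth nowhere-vanishing factor with the Riemannian one, so no such conversion is required for $G_{nd}^{sf}$.

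For $G_{nd}$: the support statement follows at once from $1-\chi(4r/r')-\chi(4r'/r)$ vanishing on $\{r/r'\leq 1/8\}\cup\{r/r'\geq 8\}$. The piece $G_{nd}^{zf}=rr'\tilde G_{nd}^{zf}$ arises from $\tilde G_{nd}^{zf}\in\Psi_b^{-2,\mathcal{E}}$, whose kernel is conormal of order $-2$ at the diagonal uniformly up to $\zf$ as a $b$-half-density. Converting to a Riemannian multiple introduces $(rr')^{1-d/2}$, which on the support of $G_{nd}$ (where $r/r'$ is bounded) is comparable to $\rho_{\zf}^{2-d}$; this gives the claimed behaviour of $\rho_{\zf}^{d-2}G_{nd}^{zf}$. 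The piece $G_{nd}^{sf}\in\Psi_{sc}^{-2,0}$ is already conormal of order $-2$ uniformly up to $\sf$ as a multiple of the Riemannian half-density, and vanishes to infinite order at $\bfc$ by the definition of the scattering calculus.

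For $G_f$, statement (i) is automatic since $\supp G_f\subset\{r/r'\leq 1/4\}\cup\{r/r'\geq 4\}$ is disjoint from the lifted diagonal; polyhomogeneity at $\lbz$ is Proposition~\ref{prop:phg}, and polyhomogeneity at $\rbz$ follows by the $r\leftrightarrow r'$ symmetry of \eqref{infiniteseries}. For (ii), observe that on $\supp G_f$ at least one of $r,r'$ tends to infinity when approaching any of $\lbi$, $\rbi$, $\bfc$; the estimates \eqref{expvanishing} together with Lemma~\ref{sumlemma} then give rapid vanishing of $\tilde G_f$ at all three faces. Applying the same argument to $(r\partial_r)^k I_\mu$, $\partial_r^k I_\mu$, $(r\partial_r)^k K_\mu$, $\partial_r^k K_\mu$, combined with the H\"ormander derivative estimate $\|\nabla^{(k)} u_j\|_\infty\leq C_k\mu_j^{(d-1)/2+k}$, produces rapid vanishing of all derivatives, and the conversion factor $(rr')^{1-d/2}$ preserves this.

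For (iii) and (iv), the leading Bessel asymptotics \eqref{Iexpansion}--\eqref{Kexpansion} give, near $\lbz\cap\zf$,
\[
\tilde G_f \sim \frac{1}{2\mu_0}\Big(\sum_{\mu_j=\mu_0}u_j(y)\overline{u_j(y')}\Big)\Big(\frac{r}{r'}\Big)^{\mu_0}\bigg|\frac{dr}{r}\frac{dr'}{r'}dh\, dh'\bigg|^{1/2},
\]
with subleading terms contributing strictly higher powers of $r/r'$ and $r'$. Converting to a Riemannian multiple yields
\[
(rr')^{1-d/2}\Big(\frac{r}{r'}\Big)^{\mu_0}=\Big(\frac{r}{r'}\Big)^{1-d/2+\mu_0}(r')^{2-d},
\]
which exhibits vanishing order $1-d/2+\mu_0$ at $\lbz$ (with boundary defining function $r/r'$) and order $2-d$ at $\zf$ (with boundary defining function $r'$); the $\rbz$ case is symmetric. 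The main obstacle throughout (ii)--(iv) is that these are statements about the full polyhomogeneous expansion, not just the leading asymptotic, uniformly on each face. This forces one to control the entire tail of the series \eqref{infiniteseries} together with all its derivatives, precisely the role played by the uniform Bessel estimates \eqref{Ismall}--\eqref{Klarge3} and Lemma~\ref{sumlemma}.
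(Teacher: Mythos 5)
Your proof is correct and follows essentially the same route as the paper's: the $G_{nd}$ properties come from the $b$- and scattering calculi, polyhomogeneity and rapid decay of $G_f$ come from Proposition~\ref{prop:phg}, Lemma~\ref{sumlemma} and the estimates \eqref{expvanishing}, and the vanishing orders come from the leading Bessel asymptotics together with the half-density conversion factor $(rr')^{1-d/2}$ from \eqref{KKtilde}. Your write-up is somewhat more explicit than the paper's --- in particular you track the conversion factor carefully in (iii)--(iv) and account for possible multiplicity of $\mu_0$ --- but the underlying argument is the same.
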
 
\begin{proof}
The properties of $G_{nd}$ follow from properties of the full $b$-calculus and of the scattering calculus recalled in Section~\ref{sec:bsc}. 

The diagonal part of property (i) of $G_f$ is clear: in fact it is supported away from the diagonal. Polyhomogeneity of $G_f$ at $\lbz$ and $\rbz$ follows from Proposition~\ref{prop:phg} and the symmetry of $G_f$, while polyhomogeneity (in a trivial sense, with an empty index set) at $\lbi$, $\rbi$, and $\bfc$ follows from the exponential decrease of $G_f$ as $r$ or $r'$ tend to infinity, as shown by Lemma~\ref{sumlemma}. 

We obtain the vanishing order at $\lbz${} from equations (\ref{initial}) and (\ref{GG0tilde}). 
Since $r$ is the boundary defining function for $\lbz${}, we need to work out its power. 
Clearly one power of $r$ comes from (\ref{GG0tilde}), 
while $I_{\mu_0}(r)$ in (\ref{initial}) gives us the power $r^{\mu_0}$.  Then the difference between the $b$-half density and the Riemannian half-density gives us a power of $r^{-d/2}$ (as in \eqref{KKtilde}). 
Combining these we conclude that the vanishing order at $\lbz${} is $1-\frac{d}{2}+\mu_0$. 
The vanishing order at $\rbz${} is similar.

Last, we show (iv). Since both $r$ and $r'$ vanish at $zf$, to obtain the vanishing order of $G_a$
at $zf$, as a scattering-half-density, we combine the powers of $r$ and $r'$ in (\ref{Gzfdefinition}) and \eqref{GG0tilde} with the factor $(rr')^{-d/2}$ involved in the change from a $b$-half density to the Riemannian half-density.  
So the order of vanishing is $1-\frac{d}{2}+1-\frac{d}{2}=2-d$.
\end{proof}

\begin{proposition}\label{Eproperties}
The error term $E$ has the following properties on the blown-up space:
\begin{enumerate}
\item[\rm (i)] it is smooth in the interior;
\item[\rm (ii)] it vanishes to the first order (as a $b$-half-density, or to order $1-d$ as a Riemannian half-density) at the $zf$-face;
\item[\rm (iii)] it vanishes to infinite order at $\lbz$, $\rbz$, $\lbi$, $\rbi$, $\sf$ and $\bfc$;
\item[\rm (iv)] it is compact on $L^2(M)$; in fact its Schwartz kernel is Hilbert-Schmidt. 
\end{enumerate}
Moreover, the $k$-fold composition $E^k$ satisfies similar conditions, with (ii) strengthened to vanishing to order $k$ at $\zf$ as a $b$-half-density. %being supported in $\{ 8^{-k} \leq r/r' \leq 8^k \}$. 
\end{proposition}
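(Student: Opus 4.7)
The plan is to decompose $E = PG_a - \Id = PG_f + (PG_{nd} - \Id)$ and treat the two pieces using, respectively, the exact Bessel formula for $\tilde G$ and the parametrix constructions in the $b$- and scattering calculi. The key algebraic identity for the first piece is that each $\tilde T_j^{-1}$ is an exact Green's function, so $\tilde P \tilde G = \Id$ on the region where the series \eqref{infiniteseries} converges; since the cutoff $\Phi = \chi(4r/r') + \chi(4r'/r)$ defining $\tilde G_f$ vanishes in a neighbourhood of the diagonal, this yields
\[
\tilde P \tilde G_f = \tilde P(\Phi \tilde G) = [\tilde P, \Phi]\tilde G,
\]
so that $PG_f = r^{-1}r'[\tilde P, \Phi]\tilde G$ is supported in the transition annulus $r/r' \in [1/8, 1/4] \cup [4, 8]$, which is bounded away from $\lbz, \rbz, \lbi, \rbi$. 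By Lemma~\ref{sumlemma} and Proposition~\ref{prop:phg}, the series $\tilde G$ converges smoothly on that annulus with exponential decay at infinity, giving smoothness and infinite-order vanishing at $\sf$ and $\bfc$ as well. The second piece, $PG_{nd} - \Id$, is controlled by Propositions~\ref{compacterror} and~\ref{sc-invert}: the $b$-calculus parametrix contributes an error smooth across the diagonal and vanishing to first order at $\zf$ as a $b$-half density, while the scattering parametrix contributes an error smooth across the diagonal and vanishing to infinite order at every face of the scattering double space. The cutoff $1-\Phi$ depends only on $r/r'$ and acts on the right variable, commuting with the left-acting $P$.

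With this setup, properties (i)--(iv) follow quickly. Property (i) is standard: the order $-2$ conormal singularity of $G_{nd}$ at the diagonal is cancelled by $\Id$ after $P$ is applied, by ellipticity. Property (iii) at $\lbz, \rbz, \lbi, \rbi$ follows from the support of $PG_f$ being disjoint from these faces and the scattering error vanishing to infinite order; at $\sf$ and $\bfc$ it is built into the scattering construction and the exponential decay of the Bessel series. Property (ii), first-order vanishing at $\zf$, follows from the matching of leading $\zf$-asymptotics computed in equations \eqref{compatibility} and \eqref{ibgbsy}, which identifies the leading term of $\tilde G$ at $\zf$ with $\bigl(I_b(\tilde P)\bigr)^{-1}$; consequently $I_b(\tilde G_a) = I_b(\tilde P)^{-1}$ globally on $\zf$ and $I_b(\tilde P \tilde G_a - \Id) = 0$. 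Property (iv) is then immediate: $E$ is smooth and rapidly decaying on the blown-up space, hence square integrable on $M \times M$, so it is Hilbert--Schmidt. For the statement on $E^k$, I would apply the composition theorem for $b$-half-density kernels: composition of two kernels vanishing to orders $\ell_1, \ell_2$ at $\zf$ and rapidly vanishing at the other faces yields a kernel vanishing to order $\ell_1 + \ell_2$ at $\zf$ with the other vanishing orders preserved. Iterating with $\ell = 1$ gives $E^k$ vanishing to order $k$ at $\zf$.

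The main obstacle I expect is (ii): verifying that combining $G_f$ and $G_{nd}^{\zf}$ via the partition of unity $\Phi + (1-\Phi) = 1$ really does produce a kernel whose $\zf$-leading term equals $I_b(\tilde P)^{-1}$ globally, rather than accumulating spurious boundary contributions from the cutoff. This hinges on two facts: the leading $\zf$-asymptotic of $\tilde G$ is explicit and coincides with $I_b(\tilde P)^{-1}$, computed from the power series of the modified Bessel functions $I_{\mu_j}(r)$, $K_{\mu_j}(r')$ for small arguments; and the cutoffs depend only on $r/r'$, so they restrict to smooth functions on $\zf$ summing to $1$ there. Together these ensure that no new leading-order $\zf$-term is introduced by the patching, and first-order vanishing survives.
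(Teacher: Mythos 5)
Your overall strategy tracks the paper's proof closely: (i) follows from the parametrix construction of $G_{nd}$, (ii) from the indicial-operator matching $I_b(\tilde G_a) = I_b(\tilde P)^{-1}$ established in the preceding subsection, (iii) from the support of $E$ lying in a bounded $r/r'$ annulus together with the scattering-calculus infinite-order vanishing and the exponential Bessel estimates \eqref{expvanishing}, (iv) from (i)--(iii), and the $E^k$ claim from composition. Your observation that $PG_f = r^{-1}r'[\tilde P, M_\Phi]\tilde G$, which forces $PG_f$ into the transition annulus, is a clean way of packaging the support argument the paper gives in words.

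There is, however, one incorrect step that needs repair. You assert that $1-\Phi = 1 - \chi(4r/r') - \chi(4r'/r)$ ``acts on the right variable, commuting with the left-acting $P$.'' This is false: $\Phi$ depends on $r/r'$, hence on $r$, and $P$ contains $\partial_r$ and $r^{-2}\Delta_Y$, so $[P, M_{1-\Phi}]$ is a nonzero first-order operator. Consequently $PG_{nd}$ is \emph{not} $[P(G_{nd}^{zf}+G_{nd}^{sf})](1-\Phi)$; there is an extra commutator term $[P, M_{1-\Phi}](G_{nd}^{zf}+G_{nd}^{sf})$. This does not sink the argument, but you must notice that the commutator is supported where $\nabla\Phi \ne 0$, i.e.\ in the transition annulus bounded away from the diagonal, so the conormal singularity of $G_{nd}^{zf}+G_{nd}^{sf}$ is irrelevant and the term is smooth, exponentially decaying at $\bfc$, and by the indicial matching vanishes to first order at $\zf$. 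Indeed this commutator is $-[P,M_\Phi]$ applied to the parametrix while $PG_f$ is $+[P,M_\Phi]$ applied to the exact resolvent, and their sum is $[P,M_\Phi]$ applied to the difference, which agrees with the indicial match. Without acknowledging the commutator, your decomposition of $E$ doesn't close. Finally, for the $E^k$ claim your ``composition theorem for $b$-half-density kernels'' only speaks to $\zf$, $\lbz$, $\rbz$; it says nothing about $\sf$ and $\bfc$, which live on the scattering double space. The paper's more careful route --- split $E = E_b + E_{sc}$, observe mixed products vanish to infinite order, and apply Propositions~\ref{bBcomposition} and~\ref{scatteringclosure} to the pure $b$- and scattering pieces respectively, plus the support estimate $8^{-k} \le r/r' \le 8^k$ --- is needed to control all faces simultaneously.
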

\begin{proof}
Property (i) follows from the choice of $G_{nd}$. Property (ii) follows from the fact that the indicial operator of $(rr')^{-1} G_a$ is equal to $I_b(\tilde P)^{-1}$, as shown in the previous subsection.
Property (iii) follows from the fact that elements of the scattering calculus vanish to infinite order at $\bfc$, the fact that $G_f$ is equal to the exact inverse of $P$ outside the region $\{ 1/8 \leq r/r' \leq 8 \}$ (so in fact $E$ is supported in this region, hence vanishes in a neighbourhood of $\lbz$, $\rbz$, $\lbi$ and $\rbi$), and the 
exponential vanishing of $G_f$ as either $r \to \infty$ or $r' \to \infty$ --- see \eqref{expvanishing}.  Properties (i), (ii) and  (iii) show that $E$ has an $L^2$ kernel, proving  Property (iv). 

To show the last remark, we use a smooth cutoff function to divide $E$ into two parts, $E = E_b + E_{sc}$, where $E_b$ is an order $-\infty$ operator in the $b$-calculus, vanishing to first order at $\zf$, and $E_{sc}$  is an order $(-\infty , \infty)$ operator in the scattering calculus. Then $E^k = (E_b + E_{sc})^k$. Any mixed terms will vanish to infinite order at each boundary hypersurface. Of the remaining terms, using the composition properties of the $b$- and scattering calculus recalled in Section~\ref{sec:bsc}, $E_b^k$ is order $-\infty$ in the $b$-calculus and vanishes to order $k$ at $\zf$, while $E_{sc}^k$ is order $(-\infty, \infty)$ in the scattering calculus. Moreover, $E^k$ is supported where $\{ 8^{-k} \leq r/r' \leq 8^k \}$, hence vanishes in a neighbourhood of $\lbz$, $\rbz$, $\lbi$ and $\rbi$. 
\end{proof}

We proceed to solve away $E$. 
To achieve that, we would like to invert $\Id+E$. But it might not be invertible: if not,  we perturb $G_a$ so that $\Id+E$ becomes invertible.

Since $E$ is compact on $L^2(M)$, according to Proposition~\ref{Eproperties},  
$\Id+E$ is Fredholm of index $0$, and its null space and the complement of its range both have the same finite dimension, say $N$. 
Removing the null space gives us an invertible operator, and to achieve that we add a rank $N$ operator to $G_a$.
To construct the rank $N$ operator we need the following lemma. 
\begin{lemma}\label{psiphi}
There exist smooth functions $\psi_1, ..., \psi_N$, $\phi_1, ..., \phi_N$ on $M$ such that 
\begin{enumerate}
\item[\rm(i)] $\psi_1, ..., \psi_N$ span the null space of $\Id+E$, and $P\phi_1, ..., P\phi_N$ span a space supplementary to the  range of $\Id+E$;
\item[\rm(ii)] they are $O(r^\infty)$ as $r\rightarrow 0$ and $O(r^{-\infty})$ as $r\rightarrow\infty$.
\end{enumerate}
\end{lemma}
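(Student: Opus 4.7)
The plan is to construct $\psi_j$ and $\phi_j$ separately, using the kernel structure recorded in Proposition~\ref{Eproperties} to upgrade $L^2$ null vectors to smooth functions with rapid decay. Since $E$ is compact (Proposition~\ref{Eproperties}(iv)), $\Id+E$ is Fredholm of index zero, so both $\ker(\Id+E)$ and $\ker(\Id+E^*)$ are $N$-dimensional.

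\emph{Choice of $\psi_j$.} Take any $L^2$-basis $\psi_1,\dots,\psi_N$ of $\ker(\Id+E)$. To verify smoothness and the decay required in (ii), I would iterate the Fredholm identity $\psi_j=(-E)^k\psi_j$ for arbitrary $k\in\NN$. By the final statement of Proposition~\ref{Eproperties}, the Schwartz kernel of $E^k$ is smooth in the interior, vanishes to infinite order at each of $\lbz,\rbz,\lbi,\rbi,\sf$ and $\bfc$, and vanishes to order $k$ at $\zf$. Smoothness of the kernel together with Cauchy--Schwarz gives smoothness of $\psi_j$ on $M$; infinite-order vanishing of $E^k$ at $\sf$ and $\bfc$ yields $\psi_j=O(r^{-\infty})$ as $r\to\infty$; and the order-$k$ vanishing at $\zf$, combined with the bound $|\psi_j(z)|\le\|E^k(z,\cdot)\|_{L^2(M)}\|\psi_j\|_{L^2(M)}$, gives $|\psi_j(z)|=O(r^{k'})$ as $r\to 0$ for some $k'=k'(k)\to\infty$ with $k$. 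Since $k$ is arbitrary, $\psi_j=O(r^\infty)$ as $r\to 0$.

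\emph{Choice of $\phi_j$.} The same argument applied to $E^*$ (whose kernel has the mirror structure of $E$) shows that any $L^2$-basis $\phi^*_1,\dots,\phi^*_N$ of $\ker(\Id+E^*)$ consists of smooth functions satisfying (ii). Since $P$ is positive and hence injective on $L^2(M)$, $P\phi^*_1,\dots,P\phi^*_N$ are linearly independent in $L^2(M)$. Density of $C_c^\infty(M)$ in $L^2(M)$ then makes the linear map $\phi\mapsto(\langle\phi,P\phi^*_k\rangle)_{k=1}^N$ from $C_c^\infty(M)$ to $\CC^N$ surjective: a nonzero $c\in\CC^N$ in its cokernel would give $\sum_k\bar c_k P\phi^*_k=0$ by density, forcing $c=0$ by injectivity of $P$. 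Pick $\phi_1,\dots,\phi_N\in C_c^\infty(M)$ with $\langle\phi_j,P\phi^*_k\rangle=\delta_{jk}$; compact support in $M$ trivially implies (ii), and self-adjointness of $P$ (valid since both factors are smooth with rapid decay) yields $\langle P\phi_j,\phi^*_k\rangle=\delta_{jk}$.

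\emph{Completing (i), and the main obstacle.} Fredholm theory gives $\operatorname{range}(\Id+E)=\ker(\Id+E^*)^\perp$, an $N$-codimensional closed subspace of $L^2(M)$. The identity pairing $\langle P\phi_j,\phi^*_k\rangle=\delta_{jk}$ shows $\{P\phi_j\}$ is linearly independent modulo $\operatorname{range}(\Id+E)$, so $\spn\{P\phi_j\}$ is an $N$-dimensional complement of $\operatorname{range}(\Id+E)$; together with $\{\psi_j\}$ spanning $\ker(\Id+E)$ this establishes (i). The substantive step is the rapid vanishing of $\psi_j$ (and of $\phi^*_k$) at $r=0$: a single application of $E$ yields only first-order vanishing at $\zf$, so the iteration via $E^k$ in Proposition~\ref{Eproperties} is essential. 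Once this regularity statement is in place, the construction of $\phi_j$ is a routine Fredholm/density argument.
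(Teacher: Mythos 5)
Your treatment of the $\psi_j$ is conceptually the same as the paper's: both iterate $\psi_j=(-E)^k\psi_j$ and exploit the improved order of vanishing of $E^k$ at $\zf$ together with the rapid decay at the other boundary faces. The paper, however, handles the regularity and decay more carefully by first decomposing $E=E_b+E_{sc}$ and establishing the mapping properties $E\colon r^aL^2\to r^{a+1}\jap{r}^{-K}L^2$ and $\nabla E\colon r^aL^2\to r^{a}\jap{r}^{-K}L^2$, from which it deduces that $E^{2N}$ maps $L^2$ into $r^{N}\jap{r}^{-2N}H^{N}$; Sobolev embedding then delivers smoothness and the quantitative rate of decay in one stroke. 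Your argument via the pointwise Cauchy--Schwarz bound $|\psi_j(z)|\le\|E^k(z,\cdot)\|_{L^2}\|\psi_j\|_{L^2}$ is in the right spirit, but the steps ``smoothness of the kernel $\Rightarrow$ smoothness of $\psi_j$'' and ``$\|E^k(z,\cdot)\|_{L^2}=O(r^{k'})$ with $k'\to\infty$'' are asserted rather than derived; to make them precise you would also need to track the $b$- versus Riemannian half-density normalisations and to differentiate under the integral, which is exactly what the paper's mapping-property bookkeeping is designed to do painlessly.

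For the $\phi_j$ you take a genuinely different route. The paper chooses the $\phi_j$ from the class $\mathcal S$ of rapidly decaying smooth half-densities by proving a density statement: if $P(\mathcal S)$ were not dense in $L^2$ there would be a nonzero $f\perp P(\mathcal S)$, hence $Pf=0$ distributionally, hence (elliptic regularity plus invertibility of $P$) $f=0$. Since $\operatorname{range}(\Id+E)$ is closed of codimension $N$, density of $P(\mathcal S)$ in $L^2$ forces $P(\mathcal S)$ to surject onto the $N$-dimensional quotient, and one picks a preimage of a basis. You instead apply the $\psi$-argument to $E^*$ (noting that the kernel estimates of Proposition~\ref{Eproperties} are symmetric in $z,z'$), obtain a smooth rapidly decaying basis $\phi_1^*,\dots,\phi_N^*$ of $\ker(\Id+E^*)$, and then pick $\phi_j\in C_c^\infty(M)$ dual to $P\phi^*_k$. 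The duality identity $\langle P\phi_j,\phi^*_k\rangle=\delta_{jk}$ immediately gives that $\{P\phi_j\}$ is transverse to $\operatorname{range}(\Id+E)=\ker(\Id+E^*)^\perp$, which is exactly what is needed. This is correct; it buys a more explicit construction of the $\phi_j$ (they are compactly supported, so (ii) is automatic), at the cost of having to verify that $E^*$ inherits the structure of $E$ and that the pairing can be arranged. The paper's version is slightly slicker because it sidesteps $E^*$ entirely and only requires one abstract density observation about $P$.
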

\begin{proof}
We choose the $\psi_i$ to be any basis of the null space of $\Id+E$. To obtain property (ii) for the $\psi_i$, we note that $\psi_i=-E(\psi_i)$, hence iterating, we have $\psi_i =  E^{2N} \psi_i$ for each $N \geq 1$. Now we consider mapping properties of the operator $E^N$. First, writing $E = E_b + E_{sc}$ as in the proof of Proposition~\ref{Eproperties}, it is easy to see that $E_{sc}$ and $\nabla E_{sc}$ map $L^2(M)$ to $\jap{r}^{-K} L^2(M)$ for arbitrary $K$. (Here $\nabla$ is shorthand for the vector of derivatives $(\partial_r, r^{-1} \partial_{y_i})$.) As for $E_b$, since it has negative order in the $b$-calculus and vanishes to first order at $\zf$, we see that $E_b$ maps $L^2(M)$ to $r L^2(M)$. Since the kernel $(r/r')^a E$ has the same properties as $E$ listed in Proposition~\ref{Eproperties}, it follows that $E_b$ maps $r^a L^2(M)$ to $r^{a+1} L^2(M)$ for any $a$. Also, applying a derivative $\nabla = (\partial_r, r^{-1} \partial_{y_i})$ to $E_b$, it is still of negative order in the $b$-calculus, though no longer vanishing at $\zf$, so we see that $\nabla E_b$ maps $r^a L^2(M)$ to $r^{a} L^2(M)$ for any $a$. Summarizing, we have
\begin{equation}\begin{gathered}
E \text{ boundedly maps } r^a L^2(M) \to r^{a+1} \jap{r}^{-K} L^2(M), \\
\nabla E \text{ boundedly maps } r^a L^2(M) \to r^{a} \jap{r}^{-K} L^2(M).
\end{gathered}\label{Emapping}\end{equation}
Applying these properties of $E$ iteratively, we see that $E^{2N}$ maps $L^2(M)$ to $r^{N} \jap{r}^{-2N} H^N(M)$ for any $N$. Hence, using Sobolev embeddings, $\psi$ is smooth and has rapid decay both as $r \to 0$ and $r \to \infty$. 

As for the $\phi_i$, to show that we can choose functions $\phi_1, \dots, \phi_N$ as above, it is sufficient to show that the range of $P$ on the subspace $\mathcal{S}$ of  smooth half-densities satisfying (ii) is dense on $L^2(M)$. If this were not true, then there would be a nonzero half-density $f \in L^2(M)$ orthogonal to the range of $P$ on such half-densities: that is, we would have
$$
\langle P u, f \rangle = 0, \text{ for all } u \in \mathcal{S}. 
$$
Since $\mathcal{S}$ is a dense subspace, this implies that $Pf = 0$ distributionally. By elliptic regularity this means that $f$ is smooth and $Pf = 0$ strongly, but since $P$ is invertible on $L^2$ this implies $f = 0$, a contradiction. Therefore we can choose the $\phi_i \in \mathcal{S}$ as desired. 
\end{proof}

Let $Q$ be the rank   $N$ operator  
\[Q=\sum_{i=1}^N \phi_i\langle\psi_i,\cdot \rangle,\]
where $\langle\psi_i,\cdot\rangle$ means the inner product with $\psi_i$. 
The functions $\psi_1,...,\psi_N$, 
$\phi_1,...\phi_N$ are chosen as in Lemma \ref{psiphi}.
Then we have 
\[P(G_a+Q)=\Id+E+PQ,\]
which is invertible. From here we obtain
\[P^{-1}=(G_a+Q)(\Id+E+PQ)^{-1}.\]
Using property (ii) of Lemma~\ref{psiphi}, we see that $G_a+Q$ has the `same' properties as $G_a$, ie it has those properties listed in Proposition \ref{Gaproperties}, and $E' := E + PQ$ has  properties (i) -- (iv) listed in Proposition \ref{Eproperties}.
Define operator $S$ by 
\[S=(\Id+E')^{-1}-\Id.\]
Then we can write
\[P^{-1}=(G_a+Q)(\Id+S).\]
We need to know the properties of $S$. 

\begin{lemma}\label{Sproperties}
The operator $S$ has properties (i) -- (iv) listed in Proposition \ref{Eproperties}.
\end{lemma}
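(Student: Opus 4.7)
The plan is to expand $S$ in a Neumann-type identity in $E'$, and then exploit the fact that each of the properties (i)--(iv) is preserved under composition with $E'$. From $(\Id+E')(\Id+S)=\Id$, one reads off the two identities
$$
S = -E' - E'S = -E' - SE'.
$$
Iterating the first identity $k$ times and the second $m$ times gives, for any $k,m\geq 0$,
$$
S = \sum_{j=1}^{k+m} (-1)^j {E'}^j \;+\; (-1)^{k+m}\, {E'}^k\, S\, {E'}^m.
$$
This ``double sandwich'' form is chosen so that the factor ${E'}^k$ will govern the behaviour of the remainder in the left variable and ${E'}^m$ will govern it in the right variable.

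The first step is to confirm that $E'=E+PQ$ satisfies the conclusions of Proposition~\ref{Eproperties}, including the strengthening for iterates $(E')^j$. This is essentially stated in the paragraph preceding the lemma: the perturbation $PQ=\sum_i (P\phi_i)\langle \psi_i,\cdot\rangle$ is a finite-rank operator whose Schwartz kernel is smooth on $M^2$ and, by Lemma~\ref{psiphi}(ii), vanishes to infinite order at every boundary hypersurface of the blown up double space, so it does not interfere with any of the vanishing or smoothness properties recorded for $E$. Consequently, ${E'}^j$ is smooth in the interior, vanishes to order $j$ at $\zf$ as a $b$-half-density, vanishes to infinite order at the remaining boundary hypersurfaces, and is Hilbert--Schmidt. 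Summing over $1\leq j\leq k+m$ yields a finite sum with exactly the desired properties (i), (iii), (iv), and with first-order vanishing at $\zf$ from the leading term $-E'$.

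It then remains to show that the tail $R_{k,m}:=(-1)^{k+m}{E'}^k S {E'}^m$ is negligible in the relevant sense: for $k,m$ chosen sufficiently large depending on a desired order of smoothness/vanishing $M$, $R_{k,m}$ should be $C^M$ with vanishing to order $M$ at every boundary hypersurface, and Hilbert--Schmidt. The operator $S$ is bounded on $L^2(M)$ because $(\Id+E')^{-1}$ is, and one can iterate the mapping properties \eqref{Emapping} (which transfer verbatim to $E'$): $E'$ improves weighted $L^2$-decay at $\zf$ by one power of $r$ and gains arbitrary decay at $\lbi,\rbi$; $\nabla E'$ gains regularity. Applying these mapping properties $k$ times on the left and $m$ times on the right, the kernel of $R_{k,m}$ is seen to sit in any desired weighted Sobolev space with gain $\min(k,m)$ at $\zf$ and arbitrary decay at $\lbz,\rbz,\lbi,\rbi,\sf,\bfc$. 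Sobolev embedding then converts this into the required pointwise smoothness and vanishing. Letting $k,m\to\infty$ completes the proof.

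The main obstacle I anticipate is the bookkeeping in the last step: $E'$ lives partly in the $b$-calculus (governing behaviour at $\zf$, $\lbz$, $\rbz$) and partly in the scattering calculus (governing behaviour at $\sf$, $\bfc$, $\lbi$, $\rbi$), so the argument needs to split $E'=E'_b+E'_{sc}$ as in the proof of Proposition~\ref{Eproperties}, verify that mixed compositions in the double sandwich only improve matters, and track weights in both regimes simultaneously. Once this bookkeeping is in place, properties (i)--(iv) for $S$ follow from the finite sum plus a remainder of arbitrarily high regularity and arbitrarily fast vanishing.
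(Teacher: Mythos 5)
Your proposal is correct and follows essentially the same route as the paper: a Neumann-type expansion of $S$ in powers of $E'$ with a remainder of sandwich form ${E'}^k S {E'}^m$ (the paper takes $k=m=2N$), followed by iterating the mapping properties \eqref{Emapping} and applying Sobolev embedding to show the remainder has arbitrarily high regularity and vanishing. The bookkeeping concern you flag is handled exactly as in the proof of Proposition~\ref{Eproperties}, by splitting $E'=E'_b+E'_{sc}$, as you anticipated.
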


\begin{remark}
A similar analysis was done in \cite[Sec. 5.4]{GHS}.
\end{remark}

\begin{proof}
Using the identities  $(\Id+S)(\Id+E')=(\Id+E')(\Id+S)=\Id$, we obtain 
\begin{equation}\label{Sequation}
S=-E'+E'^2+E'SE'.
\end{equation}
For any positive integer $N$, we substitute the expression (\ref{Sequation}) into itself $2N-1$ times, and we get
\begin{equation}\label{SNtimes}
S=\sum_{j=1}^{4N}(-1)^jE'^j+E'^{2N}SE'^{2N}.
\end{equation}
Using the last part of Proposition~\ref{Eproperties}, we see that the  term $\sum_{j=1}^{4N}(-1)^jE'^j$  has all the properties listed in the Lemma, 
so we focus on the term $S_N := E'^{2N}SE'^{2N}$. 
Using  \eqref{Emapping}, we see that $r^{-N} \jap{r}^{2N} \nabla_z^{(N)} {E'}^{2N}$ and ${r'}^{-N} \jap{r'}^{2N} \nabla_{z'}^{(N)} {E'}^{2N}$ are bounded operators on $L^2$. Since $S$ is Hilbert-Schmidt, it follows that
$(r r')^{-N} \jap{r}^{2N} \jap{r'}^{2N} \nabla_z^{(N)} \nabla_{z'}^{(N)}  S_N$ has an $L^2$ kernel. Using Sobolev embeddings, this gives regularity and vanishing (at the boundary) of $S_N$ of some finite order $N + O(1)$, and hence the same finite order regularity and vanishing of $S$. Since this argument can be made for any $N$, this proves that $S$ has the properties (i) -- (iv) listed in Proposition \ref{Eproperties}.
\end{proof}

To summarise, we have 
\[G=P^{-1}=(G_a+Q)(\Id+S),\]
where $G_a+Q$ has those properties listed in Proposition \ref{Gaproperties}, 
$\Id+S$ is a compact operator, 
and $S$ has those properties listed in Lemma \ref{Sproperties}.
Our final step is to analyze the composition $(G_a+Q)(\Id+S)$ and show that $G$ itself satisfies all the conditions listed in Proposition~\ref{Gaproperties}. 
We summarise key information about $G=P^{-1}$ obtained through our construction in the following theorem. To state it, define 
$\omega = 1 - \chi(4r/r') - \chi(4r'/r)$ where $\chi$ is as in \eqref{initial}; thus, $\omega$ is a smooth function on the blown up space supported away from $\lbz, \lbi, \rbz$ and $\rbi$, and equal to $1$ on a neighbourhood of the diagonal.  Also let $\rho_{\zf}$ be a boundary defining function for $\zf$.
\begin{theorem}\label{Gproperties}
Let $G_c = \omega G$ and $G_s = (1 - \omega) G$. Then, as  a multiple of the Riemannian half-density, ie the scattering-half-density $|r^{d-1}r'^{d-1}drdr'dh dh'|^{\frac{1}{2}}$, 
on the blown-up space, 
$\rho_{\zf}^{d-2}G_c$ is conormal of order $-2$ with respect to the diagonal uniformly 
up to both $zf$ and $\sf$, while  $G_s$ satisfies properties (i)-(iv) listed in Proposition \ref{Gaproperties}.
\end{theorem}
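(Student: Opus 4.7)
The plan is to expand
\[
P^{-1} = (G_a + Q)(\operatorname{Id} + S) = (G_a + Q) + R, \qquad R := (G_a + Q)\,S,
\]
and reduce the theorem to showing that $R$ has properties at least as good as the off-diagonal piece $G_f + Q$ of $G_a + Q$. Granted this, the only source of conormal singularity at the lifted diagonal in $G$ is $G_{nd}\subset G_a$, whose construction as an element of the full $b$-calculus near $\zf$ and of the scattering calculus near $\sf$ already gives conormality of order $-2$ uniformly up to both faces, with the factor $\rho_{\zf}^{d-2}$ compensating for the $\rho_{\zf}^{2-d}$ Riemannian-order blowup at $\zf$. Then the split $G = G_c + G_s = \omega G + (1-\omega)G$ will separate the conormal contribution (inside $G_c$) from the polyhomogeneous off-diagonal contribution (inside $G_s$).

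The main step is the composition analysis of $R$. By Lemma~\ref{Sproperties}, $S$ is smooth in the interior, vanishes to infinite order at $\lbz, \rbz, \lbi, \rbi, \sf, \bfc$, and vanishes to first order at $\zf$ as a $b$-half-density. Writing
\[
R(z, z') = \int (G_a + Q)(z, z'')\, S(z'', z')\, dz'',
\]
the infinite-order vanishing of $S$ at the right and far boundary hypersurfaces forces the effective $z''$-integration to be localized away from those boundaries, while the smoothness of $S$ in $z''$ convolves away the conormal singularity of $G_a + Q$ at the diagonal; hence $R$ is smooth in $(z,z')$ across the diagonal. For the boundary orders: at $\rbz$, $\rbi$, $\bfc$ the $z'$-dependence of $S$ forces $R$ to vanish to infinite order; at $\lbz$, $\lbi$ the vanishing of $R$ is inherited from the corresponding $z$-decay of $G_a + Q$, in particular to order at least $1-\tfrac{d}{2}+\mu_0$ at $\lbz$. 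Polyhomogeneity at every boundary hypersurface follows from polyhomogeneity of both factors together with standard push-forward results. At $\zf$, Proposition~\ref{bBcomposition} applies: since $I_b(S)=0$ by Proposition~\ref{Eproperties}, the indicial operator of $R$ also vanishes, so $R$ has $b$-order strictly greater than zero at $\zf$, which is order strictly greater than $2-d$ as a Riemannian half-density, hence at least as good as property (iv) of Proposition~\ref{Gaproperties}.

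With $R$ under control, we split $G = G_c + G_s$. Since $\omega \equiv 1$ on a neighbourhood of the diagonal and $G_{nd}$ gives the desired conormal singularity uniformly up to $\zf$ and $\sf$, while the smooth contributions $\omega(G_f+Q)$ and $\omega R$ only add smooth terms to $\rho_{\zf}^{d-2}G_c$, the claim on $G_c$ follows. The piece $G_s$ is supported away from the diagonal; here $(1-\omega)G_{nd}$ is smooth and inherits the required polyhomogeneous boundary behaviour from the $b$- and scattering-calculus descriptions of $G_{nd}$, and it combines with the already smooth pieces $(1-\omega)(G_f + Q)$ and $(1-\omega)R$ to yield properties (i)--(iv) of Proposition~\ref{Gaproperties}. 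The main obstacle is the bookkeeping at $\zf$: one must carefully reconcile $b$- versus Riemannian half-density conventions and verify that the $b$-calculus composition actually yields the leading Riemannian order $2-d$ (equivalently, that the indicial-operator cancellation for $R$ is strict), which is precisely what Proposition~\ref{bBcomposition} combined with $I_b(S)=0$ delivers.
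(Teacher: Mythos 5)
Your proof is correct and takes essentially the same approach as the paper: write $G = (G_a + Q) + (G_a + Q)S$, note that the conormal diagonal singularity lives entirely in $\omega G_{nd}$, and reduce the boundary claims to a composition analysis of $(G_a + Q)S$ using Propositions~\ref{bBcomposition} and~\ref{scatteringclosure} together with $I_b(S)=0$. The one place where the paper is more explicit than your sketch is the composition step itself: rather than appealing to a heuristic ``convolution smooths out the diagonal singularity'' plus ``standard push-forward results,'' the paper introduces a cutoff $\eta$ with $\eta([0,1])=0$, $\eta([2,\infty))=1$ and splits $G_a S = \bigl(\eta(r)G_a\eta(r')\bigr)S + \bigl(G_a-\eta(r)G_a\eta(r')\bigr)S$, so that the first summand is a genuine scattering-calculus composition (using $\eta(2r')S\in\Psi_{sc}^{-\infty,\infty}$) and the second is a genuine full-$b$-calculus composition, with each calculus' composition theorem then applied directly. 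This cutoff device is what makes the claim that the composition stays in the relevant calculus (and inherits the index sets at $\lbz,\rbz$ and gains one $b$-order at $\zf$) a literal invocation of the cited propositions rather than an informal argument; you would want to include it, but your overall structure and use of $I_b(S)=0$ match the paper's proof.
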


\begin{remark}
The subscripts $c$ and $s$ are chosen to indicate that 
$G_c$ is the part of $G$ which is conormal at the diagonal, 
while $G_s$ is the part of $G$ which is smooth at the diagonal. 
\end{remark}

\begin{proof}
We have already proved these properties for $G_a$, in Proposition~\ref{Gaproperties}, so we need to check them for the terms $Q+QS+G_aS = G - G_a$. 
Since $Q$ and $QS$ both are smooth and vanish to infinite order at the boundaries, these terms trivially satisfy all the conditions. So 
it remains to check that $G_aS$ has the same properties as $G_a$.

We write $G_aS$ as a sum of two parts. Let $\eta : [0, \infty) \to [0, 1]$ be a smooth cutoff function such that $\eta([0,1]) = 0$ and $\eta([2, \infty)) = 1$. 
The first part $\eta(r)G_a\eta(r')$ is in the scattering calculus. 
Note that $\eta(2r')S$ is also in the scattering calculus, and that 
$$
\big(\eta(r)G_a\eta(r') \big) \big(\eta(2r')S \big) = \big(\eta(r)G_a\eta(r') \big) S.
$$
Therefore by Proposition \ref{scatteringclosure}, this term 
%\[\big(\eta(r)G_a\eta(r')\big)S=\big(\eta(r)G_a\eta(r')\big)\big(\eta(2r')S\big)\]
is in the scattering calculus.
The second part $G_a-\eta(r)G_a\eta(r')$ is in the 
full $b$-calculus. (Although the support of this term meets the boundary hypersurfaces $\lbi$ and $\rbi$, its Schwartz kernel is rapidly vanishing there, enabling us to regard it as living in the $b$-calculus.)  In a similar sense, $S$ is in the small $b$-calculus (it vanishes rapidly at every boundary hypersurface except $\zf$). 
Therefore by Proposition \ref{bBcomposition}, 
$\big(G_a-\eta(r)G_a\eta(r')\big)S$ is in the full $b$-calculus, with the same index sets at $\lbz$ and $\rbz$ as $G_a$. 
Therefore the required properties for $G_c$ and property (i) for $G_s$ follow. Also, since $S$ vanishes to first order at $\zf$, the same is true for the composition $\big(G_a-\eta(r)G_a\eta(r')\big)S$. So $G_a S$ has the same vanishing orders (or better) at the boundary hypersurfaces as $G_a$. 
\end{proof}

\begin{remark}
In the case of the potential $V\equiv 0$, we have $\mu_0=\frac{d}{2}-1$. 
So the vanishing order in item (iii) of Theorem \ref{Gproperties} becomes $0$. 
It's consistent with the case when the cone is $\mathbb{R}^d$ and the potential $V\equiv 0$, 
when the cone tip can be chosen arbitrarily, and $G$ is smooth everywhere.
\end{remark}

The vanishing orders of $G=P^{-1}$ at various boundaries of the blown-up space are shown in Figure \ref{vanish}.

\begin{figure}[h!]
\centering
\includegraphics[scale=0.35]{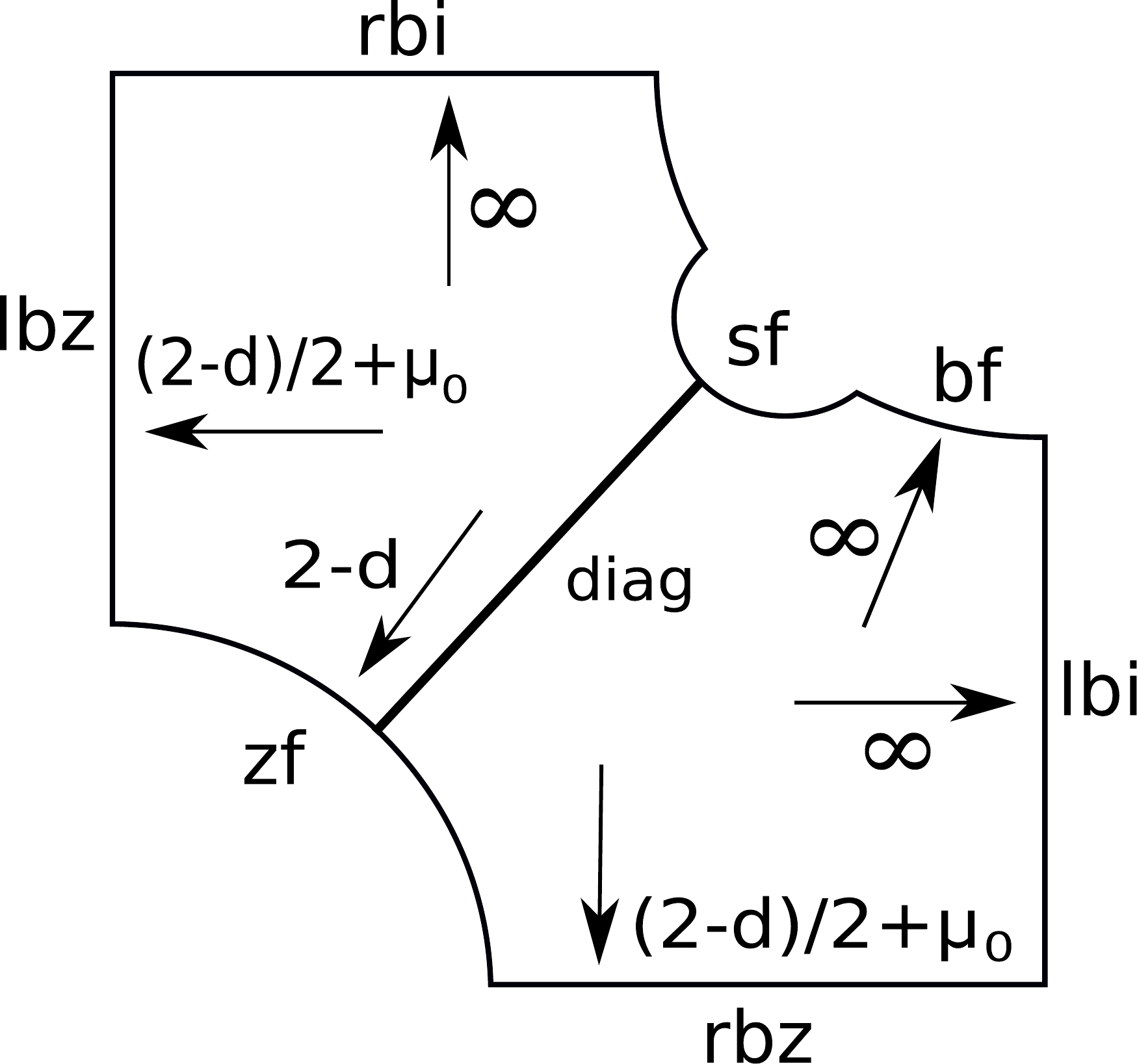}
\caption{The vanishing properties at various boundaries}
\label{vanish}
\end{figure}

\begin{remark}
The construction of $G=P^{-1}$ in this section is sketched in the paper \cite{GH} by C. Guillarmou and the first author, but details are lacking. 
It's not fully justified in \cite{GH} that the kernel is in the scattering calculus near $\sf$, and in the 
$b$-calculus near $\zf$. For this reason we have given complete details in this section.
\end{remark}

%%%%%%%%%%%%%%%%%%%%%%%%%%%%%%
%%%%%%%%%%%%%%%%%%%%%%%%%%%%%%
%%%%%%%%%%%%%%%%%%%%%%%%%%%%%%
%%%%%%%%%%%%%%%%%%%%%%%%%%%%%%
%%%%%%%%%%%%%%%%%%%%%%%%%%%%%%

\section{The Boundedness of the Riesz transform}
\label{chapter6}

\subsection{Estimate on the kernel}

Recall that the Riesz transform $T$ with the inverse square potential $V=\frac{V_0}{r^2}$, defined in Section~\ref{chapter5}, can be expressed as
\[T=\frac{2}{\pi}
\int_0^\infty\nabla\big(H+\lambda^2\big)^{-1}d\lambda,\]
where $H$ is given by \eqref{Hdefn}, 
%\[H=\Delta+\frac{V_0(y)}{r^2},\]
and recall that $H$ is homogenous of degree $-2$.
Our analysis of the Riesz transform will be based on the following estimate on the kernel $T(z, z')$. 
\begin{proposition}\label{esimateonT}
We have the following estimate on the kernel of $T$,
\begin{equation*}
\begin{split}
|T(z,z')|\lesssim\int_0^\infty\lambda^{d-2}\big|\nabla\big(G(\lambda z, \lambda z')\big)\big|d\lambda,
\end{split}
\end{equation*}
where $G=P^{-1} = (H+1)^{-1}$, with properties listed in Theorem \ref{Gproperties}.
\end{proposition}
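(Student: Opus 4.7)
The plan is to start from the functional calculus formula $T = \frac{2}{\pi} \int_0^\infty \nabla(H + \lambda^2)^{-1} d\lambda$ and exploit the homogeneity of $H$ of degree $-2$ to reduce the kernel of the resolvent $(H+\lambda^2)^{-1}$ at spectral parameter $\lambda^2$ to that of $P^{-1} = (H+1)^{-1}$ evaluated at dilated arguments.

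First I would set up the dilation relation precisely. Let $D_\lambda$ be the pullback by the dilation $z = (r,y) \mapsto (\lambda r, y)$ on the cone $M$. Since both $\Delta$ and $V_0(y)/r^2$ are homogeneous of degree $-2$, I have the operator identity $H D_\lambda = \lambda^2 D_\lambda H$, whence
\begin{equation*}
(H+\lambda^2) D_\lambda = \lambda^2 D_\lambda (H+1), \qquad \text{so} \qquad (H+\lambda^2)^{-1} = \lambda^{-2} D_\lambda \, (H+1)^{-1} D_\lambda^{-1}.
\end{equation*}

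Next I translate this into a statement about Schwartz kernels with respect to the Riemannian measure $dg(z') = {r'}^{d-1} dr' \, dh(y')$. Applying the identity to a test function and making the change of variables $w = \lambda z'$ (which produces a Jacobian factor $\lambda^d$), I obtain
\begin{equation*}
[(H+\lambda^2)^{-1} f](z) = \lambda^{d-2} \int_M G(\lambda z, \lambda z') f(z') \, dg(z'),
\end{equation*}
so that the Schwartz kernel of $(H+\lambda^2)^{-1}$ (with respect to $dg$) is $\lambda^{d-2} G(\lambda z, \lambda z')$. Applying $\nabla = (\partial_r, r^{-1} \nabla_y)$ in the left variable $z$ commutes with the $\lambda$-integration, yielding the identity
\begin{equation*}
T(z,z') = \frac{2}{\pi} \int_0^\infty \lambda^{d-2} \, \nabla_z \bigl( G(\lambda z, \lambda z') \bigr) \, d\lambda.
\end{equation*}
The desired estimate then follows by moving the absolute value inside the integral.

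The only real subtlety is that $T$ is defined spectrally and the integrand above must be interpreted carefully as a distributional object; in particular, the $\lambda$-integral need not be absolutely convergent pointwise for all $(z,z')$, since the resolvent kernel is conormally singular on the diagonal. However, for $z \neq z'$ the kernel $G$ is smooth away from the diagonal and decays with its derivatives at the faces $\lb_\infty, \rb_\infty, \bfc$, so the bound is meaningful pointwise off the diagonal, which is the only regime needed for the ensuing $L^p$ analysis. The main technical care is therefore simply keeping track of the correct powers of $\lambda$ under scaling (the $\lambda^{d-2}$ arising as $\lambda^{-2} \cdot \lambda^{d}$ from the resolvent identity and the Jacobian of the dilation) and confirming that $\nabla$ commutes with the (distributional) $\lambda$-integration, after which the triangle inequality gives the stated bound.
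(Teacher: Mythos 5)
Your proof is correct and follows essentially the same route as the paper: the functional calculus formula combined with the scaling relation $(H+\lambda^2)^{-1}(z,z') = \lambda^{d-2} G(\lambda z, \lambda z')$, which the paper derives from the degree $-2$ homogeneity of $H$ and (in its half-density convention) the Jacobian factor $\lambda^d$. Your explicit intertwining computation with the dilation operator $D_\lambda$ is a correct and somewhat more detailed way of obtaining the same identity.
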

\begin{proof}
This comes from the relationship between $(H+\lambda^2)^{-1}$ and $G=(H+1)^{-1}$, which is 
\[(H+\lambda^2)^{-1}(z, z')=\lambda^{d-2}(H+1)^{-1}(\lambda z, \lambda z').\]
The power $-2$ of $\lambda$ appears because $H$ is homogenous of degree $-2$. 
Remember these kernels are Riemannian half-densities, and this accounts for the power $d$ of $\lambda$:
\begin{equation*}
\begin{split}
|(\lambda r)^{d-1}(\lambda r')^{d-1}d(\lambda r)d(\lambda r')dhdh'|^{\frac{1}{2}}
=\lambda^d|r^{d-1}r'^{d-1}drdr'dhdh'|^{\frac{1}{2}}.
\end{split}
\end{equation*}
\end{proof}

\subsection{Boundedness on $L^2(M)$}

\begin{proposition}\label{bon2}
The Riesz transform $T$ with the inverse square potential $V=\frac{V_0}{r^2}$ is bounded on $L^2(M)$.
\end{proposition}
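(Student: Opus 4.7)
The plan is to reduce $L^2$ boundedness of $T = \nabla H^{-1/2}$ to the quadratic-form inequality $\Delta \leq C H$ on $L^2(M)$, and then establish this by a small-perturbation argument on the cross section, using Proposition~\ref{positive}.

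First I would observe that, for $f \in C_c^\infty(M^\circ)$ and $g = H^{-1/2} f$, integration by parts gives $\|T f\|_{L^2}^2 = \|\nabla g\|_{L^2}^2 = \langle \Delta g, g\rangle$, while $\|f\|_{L^2}^2 = \langle H g, g\rangle$ by the functional calculus. Thus boundedness of $T$ on $L^2$ follows from the form inequality
\[
\langle \Delta u, u\rangle \leq C \langle H u, u\rangle \quad \text{for all } u \in C_c^\infty(M^\circ),
\]
once we note that $C_c^\infty(M^\circ)$ is a form core for both the Friedrichs extension $\Delta$ and for $H$, so the inequality extends to the relevant form domains.

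Next, I would rewrite the target inequality. Since $H = \Delta + V_0/r^2$, it is equivalent to $(C-1)\Delta + C V_0/r^2 \geq 0$, i.e.\ to the positivity of $\Delta + \alpha V_0/r^2$ for $\alpha = C/(C-1) > 1$. Applying Proposition~\ref{positive} with $\alpha V_0$ in place of $V_0$, this reduces to verifying
\[
\Delta_Y + \alpha V_0 + \Big(\frac{d-2}{2}\Big)^2 \geq 0 \quad \text{on } L^2(Y).
\]
Since $Y$ is compact and $V_0$ is smooth, $V_0$ is bounded, so
\[
\Delta_Y + \alpha V_0 + \Big(\frac{d-2}{2}\Big)^2 = \Big(\Delta_Y + V_0 + \Big(\frac{d-2}{2}\Big)^2\Big) + (\alpha-1)V_0 \geq \mu_0^2 - (\alpha-1)\|V_0\|_{L^\infty(Y)},
\]
which is nonnegative whenever $\alpha - 1 \leq \mu_0^2/\|V_0\|_{L^\infty(Y)}$. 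Any such $\alpha > 1$ yields a finite $C$ with $\Delta \leq C H$, hence $\|T f\|_{L^2} \leq \sqrt{C}\, \|f\|_{L^2}$ on the dense subspace $C_c^\infty(M^\circ)$, and the result follows by density.

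I do not anticipate a serious obstacle. The only subtle point is the form-domain bookkeeping, ensured by observing that $C_c^\infty(M^\circ)$ is a form core for both $\Delta$ and $H$ (which is exactly the content of the Friedrichs extension and the hypothesis $\mu_0>0$). The special case $V_0 \geq 0$ is even easier, as we may then take $C = 1$ (since already $\langle \Delta u,u\rangle \leq \langle H u,u\rangle$).
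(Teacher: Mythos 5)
Your proof is correct and follows essentially the same route as the paper: both reduce $L^2$-boundedness of $T$ to the quadratic-form inequality $\Delta \leq C H$ (the paper phrases it as $H \geq \varepsilon \Delta$ with $\varepsilon = 1/C$), and both obtain this by rescaling the potential and invoking Proposition~\ref{positive} for the rescaled operator. The one point where you add value is in spelling out why the scaling can be absorbed --- namely that $\Delta_Y + \alpha V_0 + \bigl(\frac{d-2}{2}\bigr)^2 \geq \mu_0^2 - (\alpha-1)\|V_0\|_{L^\infty(Y)} > 0$ for $\alpha$ sufficiently close to $1$, using compactness of $Y$ and the spectral gap $\mu_0^2>0$; the paper asserts the existence of such an $\varepsilon$ without this brief justification.
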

\begin{proof}
Our assumption is $\Delta_Y+V_0(y)+(\frac{d-2}{2})^2>0$, ie $\Delta+\frac{1}{r^2}V_0(y)>0$. 
Hence there is $\varepsilon>0$ such that  $\Delta+\frac{1}{(1-\varepsilon)r^2}V_0(y)>0$. 
That is, $\Delta+\frac{1}{r^2}V_0(y)>\varepsilon\Delta$. 
From here,
\begin{equation*}
\begin{split}
\langle Tf, Tf \rangle &=\langle \Delta\big(\Delta+\frac{1}{r^2}V_0(y)\big)^{-\frac{1}{2}}f, \big(\Delta+\frac{1}{r^2}V_0(y)\big)^{-\frac{1}{2}}f \rangle\\
&\leq\langle \varepsilon^{-1}\big(\Delta+\frac{1}{r^2}V_0(y)\big)\big(\Delta+\frac{1}{r^2}V_0(y)\big)^{-\frac{1}{2}}f, \big(\Delta+\frac{1}{r^2}V_0(y)\big)^{-\frac{1}{2}}f \rangle\\
&=\varepsilon^{-1}\langle \big(\Delta+\frac{1}{r^2}V_0(y)\big)^{\frac{1}{2}}f, \big(\Delta+\frac{1}{r^2}V_0(y)\big)^{-\frac{1}{2}}f \rangle\\
&=\varepsilon^{-1}\langle f, f \rangle.
\end{split}
\end{equation*}
Therefore $T$ is bounded on $L^2(M)$.
\end{proof}

\subsection{The diagonal region}
To understand the Riesz transform on $L^p$, we break up $G$ as in Theorem~\ref{Gproperties}. Here we will write $G_1$ for $G_c = \omega G$ (recall $\omega = 1 - \chi(4r/r') - \chi(4r'/r)$),  and we further decompose $G_s = G_2 + G_3$, where $G_2 = G \chi(4r/r')$ and $G_3 = G \chi(4r'/r)$. Notice that $G_2$ and $G_3$ are supported away from the diagonal, in particular where the infinite series \eqref{infiniteseries} has good convergence properties as shown in Proposition~\ref{prop:phg}. 
We correspondingly break up the Riesz transform into three pieces. Thus we have 

\begin{equation}
T_i(z,z')
= \frac{2}{\pi} \int_0^\infty\lambda^{d-2}\nabla_z\big(G_i(\lambda z, \lambda z')\big) \, d\lambda.\\
\label{Tidefn}\end{equation}

We now show that $T_1$ is of weak type $(1, 1)$. 
For that we first need to estimate the derivatives of $G_1$. 

\begin{lemma}\label{gdestimate}
Let $\dist(z, z')$ denote the distance between $z$ and $z'$ on $M$. On the support of $\omega$,  we have $\rho_{\bfc}\lesssim\dist(z, z')^{-1}$, 
where $\rho_{\bfc}$ is a boundary defining function for $\bfc$.
\end{lemma}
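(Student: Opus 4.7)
The plan is to exploit the constraint $r \sim r'$ forced by the support of $\omega$, combined with the triangle inequality on the cone and the explicit form of $\rho_{\bfc}$ in the local coordinate charts near $\bfc$ and near $\sf$ set up in Section~\ref{theblownupspace}.

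First, I would observe that since $\omega = 1 - \chi(4r/r') - \chi(4r'/r)$, its support is contained in $\{1/8 \leq r/r' \leq 8\}$, so $r \sim r' \sim r + r'$ on $\supp \omega$. The triangle inequality on $M$, applied via the cone point, then gives $\dist(z, z') \leq r + r' \lesssim r$, a bound I will use throughout.

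Next, I would split $\supp \omega$ into a neighborhood of $\bfc$ away from $\sf$, a neighborhood of $\sf$, and a compact remainder. On the compact remainder, away from both $\bfc$ and $\sf$, $\rho_{\bfc}$ is bounded below and $\dist(z, z')$ is bounded above (since $r, r'$ are), so the estimate is immediate. Near $\bfc$ and away from $\sf$, the chart $(r/r', 1/r, y, y')$ from Section~\ref{theblownupspace} is valid with $\bfc = \{1/r = 0\}$; thus $\rho_{\bfc} \sim 1/r$, and combining with $\dist(z, z') \lesssim r$ yields $\rho_{\bfc} \dist(z, z') \lesssim 1$.

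Near $\sf$, I would pass to the scattering coordinates $\sigma = r - r'$, $\eta = r(y-y')$, $y$, $x = 1/r$. The face $\sf = \{x = 0\}$ sits over the diagonal, so $\dist_Y(y, y') \leq \pi$ throughout this neighborhood, and the distance formula \eqref{distanceonm} together with $r \sim r'$ gives $\dist(z, z')^2 \leq (r-r')^2 + r r' \dist_Y(y, y')^2 \lesssim \sigma^2 + |\eta|^2$. The corner $\sf \cap \bfc$ is the sphere at infinity in the $(\sigma, \eta)$ fiber of $\sf$, so on $\sf$ a boundary defining function for $\bfc$ is equivalent to $(1 + \sigma^2 + |\eta|^2)^{-1/2}$. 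Multiplying these two bounds gives $\rho_{\bfc} \dist(z, z') \lesssim 1$ on $\sf$, and this extends by continuity and boundedness to a full neighborhood of $\sf$.

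The main obstacle will be confirming the stated form of $\rho_{\bfc}$ near the corner $\sf \cap \bfc$: that restricted to $\sf$, it vanishes to first order at the sphere at infinity with leading behaviour $(1 + \sigma^2 + |\eta|^2)^{-1/2}$. This requires unpacking how the scattering blowup in Section~\ref{theblownupspace} produces compatible boundary defining functions at $\sf$ and $\bfc$, specifically by examining the projective coordinates of the second blowup; once this geometric input is pinned down, the rest of the argument reduces to the routine comparisons above.
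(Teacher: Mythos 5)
Your proof is correct, and in fact it is substantially more careful than the paper's. The paper's argument is a one-liner: from \eqref{distanceonm}, $\dist(z,z') \leq r + r'$, so on $\{1/8 \leq r/r' \leq 8\}$ one has $\dist(z,z')^{-1} \geq (r+r')^{-1} \geq \frac{1}{9} r'^{-1}$, and the paper then asserts that $r'^{-1}$ is a boundary defining function for $\bfc$ on $\supp\omega$. That assertion is imprecise: on $\supp\omega$, $r'^{-1}$ vanishes at $\sf$ as well as at $\bfc$ (indeed $r'^{-1} \sim \rho_\sf\,\rho_{\bfc}$ near the corner), so the chain $\dist^{-1} \gtrsim r'^{-1}$ does not by itself yield $\dist^{-1} \gtrsim \rho_{\bfc}$ near $\sf \cap \bfc$ — there $r'^{-1}$ may be much smaller than $\rho_{\bfc}$. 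Your decomposition into a neighbourhood of $\bfc$ away from $\sf$ (where $1/r$ \emph{is} a bdf for $\bfc$, and the $\dist \leq r+r' \lesssim r$ bound closes the argument exactly as in the paper), a neighbourhood of $\sf$ (where you identify $\rho_{\bfc}|_{\sf} \sim (1+\sigma^2+|\eta|^2)^{-1/2}$ and match it against $\dist(z,z') \lesssim (\sigma^2+|\eta|^2)^{1/2}$), and a compact remainder is exactly the right way to make the estimate rigorous across the corner, and the geometric input you flag as needing verification — that the scattering blowup radially compactifies the fibre so that the reciprocal of the fibre radius serves as $\rho_{\bfc}$ on $\sf$ — is standard and correct. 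The trade-off is length: the paper's argument is shorter but elides the corner behaviour; yours costs a case analysis but actually covers the region where the lemma is subsequently applied in Lemma~\ref{corofort1}.
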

\begin{proof}
Let $z=(r, y)$ and $z'=(r', y')$. 
Observe from \eqref{distanceonm} that $\dist(z, z')$ is bounded above by $r+r'$. 
Therefore in the region $\{ 1/8 \leq r/r' \leq 8\}$ we have
\[\dist(z, z')^{-1}\geq (r+r')^{-1}=r'^{-1}(1+\frac{r}{r'})^{-1}\geq\frac{1}{9}r'^{-1}.\]
As $r'^{-1}$ is a boundary defining function for $\bfc$ on the support of $\omega$, the result follows. 
\end{proof}

\begin{lemma}\label{corofort1}
The kernel $G_1$ satisfies the estimate that for any integer $j\geq 0$, we have
\[|\nabla_{z,z'}^jG_1(z, z')|\lesssim
\begin{cases}
\dist(z, z')^{2-d-j},\hspace{3mm}&\dist(z, z')\leq 1,\\
\dist(z,z')^{-N},&\dist(z, z')\geq 1,
\end{cases}\]
for any $N>0$.
\end{lemma}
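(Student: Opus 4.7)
The plan is to read both bounds off of Theorem~\ref{Gproperties}, which tells us that $G_1 = G_c = \omega G$ is supported where $1/8 \le r/r' \le 8$, that $\rho_{\zf}^{d-2} G_1$ is conormal of order $-2$ to the diagonal uniformly up to $\zf$ and $\sf$, and that $G_1$ (lying in the full $b$-calculus near $\zf$ and in the scattering calculus near $\sf$) vanishes to infinite order at $\bfc$. I will split according to whether $\dist(z,z')$ is small or large.

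For the small-distance estimate $\dist \le 1$, I will invoke the classical pointwise bound for conormal distributions: an order-$m$ conormal distribution to a codimension-$d$ submanifold is bounded by $\rho^{-m-d}$, with one further factor of $\rho^{-1}$ per transverse derivative, where $\rho$ is a defining function in the ambient metric. Away from $\zf$ and $\sf$, the Riemannian distance $\dist(z,z')$ serves as such a $\rho$ and the Riemannian gradient $\nabla$ consists of smooth vector fields, so $|\nabla^j G_1| \lesssim \dist^{2-d-j}$ is immediate. Near $\sf$, $\nabla = (\partial_r, r^{-1}\partial_y)$ is built from scattering vector fields and the Riemannian distance is still comparable to a defining function for the lifted diagonal in the scattering double space, so the same bound applies.

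The most delicate region is near $\zf$. By Proposition~\ref{quadraticdefining}, $\sqrt{a_{\rm diag}} \asymp \dist/\rho_{\zf}$ is a defining function for the lifted diagonal in the $b$-double space (with $\rho_{\zf} \asymp r \asymp r'$ on the support of $\omega$), and the Riemannian gradient factors as $\nabla = \rho_{\zf}^{-1} V$ for some $b$-vector field $V$. Applying the conormal estimate to the weighted kernel $\rho_{\zf}^{d-2} G_1$, and noting that $V$ preserves the factor $\rho_{\zf}^{d-2}$ up to a bounded multiple, I will obtain
\[
|V^j G_1| \lesssim \rho_{\zf}^{-(d-2)} (\dist/\rho_{\zf})^{2-d-j} = \rho_{\zf}^{j}\, \dist^{2-d-j},
\]
and therefore $|\nabla^j G_1| = \rho_{\zf}^{-j} |V^j G_1| \lesssim \dist^{2-d-j}$. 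For the large-distance case $\dist \ge 1$, on the support of $\omega$ either $\dist$ is bounded and the bound is trivial, or $r, r' \to \infty$ and we are near $\bfc$. Lemma~\ref{gdestimate} gives $\rho_{\bfc} \lesssim \dist^{-1}$, and the infinite-order vanishing of $G_1$ at $\bfc$ yields $|G_1| \lesssim \rho_{\bfc}^N \lesssim \dist^{-N}$ for every $N$. Since the Riemannian gradient is built from scattering and (rescaled) $b$-vector fields, both of which preserve infinite-order vanishing at $\bfc$, the same bound holds for $\nabla^j G_1$.

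The main technical point will be the near-$\zf$ small-distance step: I must verify that the weight $\rho_{\zf}^{d-2}$ in Theorem~\ref{Gproperties} combines exactly with the factor $\rho_{\zf}^{-1}$ converting Riemannian to $b$-vector fields so that no residual power of $\rho_{\zf}$ remains in the final estimate. Everywhere else the bound is essentially immediate from Theorem~\ref{Gproperties} and Lemma~\ref{gdestimate}.
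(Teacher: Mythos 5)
Your proof is correct and follows essentially the same route as the paper's: you invoke the conormality statement from Theorem~\ref{Gproperties} together with the quadratic defining function of Proposition~\ref{quadraticdefining} for the near-diagonal bound (separating the near-$\zf$ and away-from-$\zf$ cases exactly as the paper does), and you combine the infinite-order vanishing at $\bfc$ with Lemma~\ref{gdestimate} for the large-distance bound. The only difference is that you spell out the bookkeeping behind the derivative estimate near $\zf$ (the factorisation $\nabla = \rho_{\zf}^{-1}V$ with $V$ a $b$-vector field, and the cancellation of the $\rho_{\zf}$-powers), which the paper compresses into the single sentence about conormality and polyhomogeneous conormality.
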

\begin{proof}
Note that $G_1$ is supported in the region $R_1 = \{ 1/8 \leq r/r' \leq 8\}$. %, illustrated in Figure \ref{rr1}.
Since $\rho_{\zf}^{d-2} G_1$ is conormal of order $-2$ with respect to the diagonal, by  Proposition \ref{quadraticdefining},
near the diagonal we have 
\[ \big| \rho_{\zf}^{d-2}G_1(z, z') \big|\lesssim a_{\rm diag}^{\frac{2-d}{2}}.\]
Near $\zf$, $a_{\rm diag}=\frac{\dist(z, z')^2}{\rho_{\zf}^2}$, so it follows that near $\zf$, we have 
\[ \big|G_1(z, z') \big|\lesssim \dist(z, z')^{2-d}.\]
Away from $zf$, $a_{\rm diag}=\dist(z, z')^2$, therefore
\[ \big|G_1(z, z') \big|\lesssim a_{\rm diag}^{\frac{2-d}{2}}=\dist(z, z')^{2-d}.\]
Now let's consider the behaviour of $G_1$ near $\bfc$. 
By Theorem \ref{Gproperties}, we know that it vanishes to infinite order at $\bfc$, while by Lemma \ref{gdestimate}, we know that 
$\rho_{\bfc}\lesssim\dist(z, z')^{-1}$. Therefore near the $\bfc$-face we know that
\[|G_1(z, z')|\lesssim\dist(z, z')^{-N},\]
for any $N>0$.
The rest of $R_1$ is easy because after we take away 
the neighbourhoods near $\zf$, $\bfc$ and the diagonal, we are left with a compact set,
on which both $G_1$ and $\dist(z, z')^{-1}$ are continuous with $\dist(z, z')^{-1}$ being non-zero. 
Therefore we can conclude that 
\[|G_1(z, z')|\lesssim
\begin{cases}
\dist(z, z')^{2-d},\hspace{3mm}&\dist(z, z')\leq 1,\\
\dist(z,z')^{-N},&\dist(z, z')\geq 1,
\end{cases}\]
for any $N>0$.
Then using the conormality of $G$ at the diagonal and 
polyhomogeneous conormality of $G$ at the boundary hypersurfaces, 
we obtain the estimates on $\nabla_{z,z'}^jG_1(z, z')$.
\end{proof}

\begin{proposition}\label{T1}
The operator $T_1$ maps $L^1(M)$ into $L^{1, weak}(M)$.
\end{proposition}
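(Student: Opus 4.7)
The strategy is to show that $T_1$ is a Calder\'on--Zygmund operator on the space of homogeneous type $(M,\dist, dV_g)$ --- with respect to which $M$ is doubling of dimension $d$ --- and then invoke the classical Calder\'on--Zygmund theorem: it suffices to establish $L^2$ boundedness, the size estimate $|T_1(z,z')|\lesssim \dist(z,z')^{-d}$, and a H\"ormander-type regularity estimate.

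\textbf{Kernel size estimate.} Combining Proposition \ref{esimateonT} with the homogeneity of $H$, the chain rule gives
\[ T_1(z,z') = \frac{2}{\pi}\int_0^\infty \lambda^{d-1}\,(\nabla_z G_1)(\lambda z, \lambda z')\,d\lambda. \]
Split the integral at $\lambda_0=\dist(z,z')^{-1}$. For $\lambda\leq \lambda_0$, we have $\dist(\lambda z,\lambda z') = \lambda\dist(z,z')\leq 1$ and Lemma \ref{corofort1} with $j=1$ gives $|(\nabla G_1)(\lambda z,\lambda z')|\lesssim (\lambda\dist(z,z'))^{1-d}$, so the integrand is $\lesssim \dist(z,z')^{1-d}$ and the short-range contribution is $\lesssim \dist(z,z')^{-d}$. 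For $\lambda\geq \lambda_0$, the rapid-decay case of Lemma \ref{corofort1} with $N$ chosen larger than $d$ yields an integrand $\lesssim \dist(z,z')^{-N}\lambda^{d-1-N}$, producing again a contribution $\lesssim \dist(z,z')^{-d}$. Therefore $|T_1(z,z')|\lesssim \dist(z,z')^{-d}$. An identical scaling argument applied with one further derivative, using Lemma \ref{corofort1} with $j=2$, gives
\[ |\nabla_{z} T_1(z,z')| + |\nabla_{z'} T_1(z,z')| \lesssim \dist(z,z')^{-d-1}, \]
which implies the H\"ormander regularity condition.

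\textbf{$L^2$ boundedness and conclusion.} By Proposition \ref{bon2}, $T$ is bounded on $L^2(M)$. The far-off-diagonal pieces $T_2$ and $T_3$ are supported where $r'\geq 4r$ or $r\geq 4r'$ respectively, in which regions the explicit series representation \eqref{infiniteseries}, the exponential estimates \eqref{expvanishing} and Lemma \ref{sumlemma} provide decay of $G_i(\lambda z,\lambda z')$ sufficient to establish $L^2(M)$ boundedness of $T_i$ by a Schur-test argument. Hence $T_1 = T - T_2 - T_3$ is bounded on $L^2(M)$, and the Calder\'on--Zygmund theorem on spaces of homogeneous type then gives that $T_1$ maps $L^1(M)$ to $L^{1,\mathrm{weak}}(M)$.

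The main technical point is the kernel scaling analysis: one must verify that the rescaling $\lambda\mapsto \lambda/\dist(z,z')$ converts both the conic-type diagonal singularity of order $2-d$ and the far-field decay at $\bfc$ into exactly the $\dist(z,z')^{-d}$ Calder\'on--Zygmund size. I expect the $L^2$ boundedness of $T_2,T_3$ to be the least routine input, but it follows straightforwardly from the convergent off-diagonal series, and will in any case be sharpened in the subsequent treatment of $T_2,T_3$ on $L^p$.
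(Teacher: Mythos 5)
Your proposal is correct and follows essentially the same path as the paper's proof: verify the Calder\'on--Zygmund size and H\"ormander conditions via the $\lambda$-scaling argument and Lemma~\ref{corofort1}, and obtain $L^2$ boundedness of $T_1$ by subtracting $T_2,T_3$ from $T$ (the paper gets $L^2$ boundedness of $T_2,T_3$ from the $L^p$ ranges in Propositions~\ref{T2},~\ref{T3}, which are themselves Schur-type convolution estimates via Corollaries~\ref{lemmaupperc},~\ref{lemmalowerc}). The only minor differences are presentational; the key decomposition, the scaling computation, and the circularity-avoiding order of establishing $L^2$ bounds are the same.
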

\begin{proof}
We just apply the Calder{\' o}n-Zygmund theory, see \cite[Section 8.1.1]{LG}. It is sufficient to verify the following conditions:
\begin{enumerate}
\item[\rm (i)] $T_1$ is bounded on $L^2(M)$;
\item[\rm(ii)] $|T_1(z,z')|\leq\frac{C}{\big(\dist(z,z')\big)^d}$;
\item[\rm(iii)] $|\nabla_zT_1(z,z')|\leq\frac{C}{\big(\dist(z,z')\big)^{d+1}}$ and $|\nabla_{z'}T_1(z,z')|\leq\frac{C}{\big(\dist(z,z')\big)^{d+1}}$,
\end{enumerate}
for some constant $C>0$. 

We already know from Proposition \ref{bon2} that $T$ is bounded on $L^2(M)$.
So to verify condition (i), we just need to show $T-T_1$ is bounded on $L^2(M)$, 
which is covered by Proposition \ref{T2T3} in Section \ref{sectionr2r3}.

Now we show conditions (ii) and (iii). 
By Lemma \ref{corofort1} we know the kernel $G_1$ satisfies, 
with any $\lambda>0$, 
\[\big|\nabla_z\big(G_1(\lambda z, \lambda z')\big)\big|
\leq\lambda\big|(\nabla_zG_1)(\lambda z, \lambda z')\big|
\lesssim
\begin{cases}
\lambda^{2-d}\dist(z, z')^{1-d},\hspace{3mm}&\lambda\dist(z, z')\leq 1,\\
\lambda^{-N+1}\dist(z, z')^{-N},&\lambda\dist(z, z')\geq 1,
\end{cases}\]
and
\[\big|\nabla_z^2\big(G_1(\lambda z, \lambda z')\big)\big|\leq
\lambda^2\big|(\nabla_z^2G_1)(\lambda z, \lambda z')\big|\lesssim
\begin{cases}
\lambda^{2-d}\dist(z, z')^{-d},\hspace{3mm}&\lambda\dist(z, z')\leq 1,\\
\lambda^{-N+2}\dist(z, z')^{-N},&\lambda\dist(z, z')\geq 1,
\end{cases}\]
for any $N>0$. 
We use this to estimate $T_1(z, z')$, 
\begin{equation*}
\begin{split}
|T_1(z,z')|
&\lesssim\int_0^\infty\lambda^{d-2}\big|\nabla_z\big(G_1(\lambda z, \lambda z')\big)\big|d\lambda\\
&\lesssim\int_0^{\frac{1}{\dist(z, z')}}\dist(z, z')^{1-d}d\lambda+\int_{\frac{1}{\dist(z, z')}}^\infty\lambda^{d-N-1}\dist(z, z')^{-N}d\lambda\\
&=\dist(z, z')^{-d}+\dist(z, z')^{-N}\int_{\frac{1}{\dist(z, z')}}^\infty\lambda^{d-N-1}d\lambda\\
&=\dist(z, z')^{-d}+\dist(z, z')^{-d}\hspace{13mm}(\mbox{Choose }N=d+1.)\\
&=\frac{2}{\dist(z, z)^d}.
\end{split}
\end{equation*}
Now estimate the derivative with respect to $z$. The $z'$ case is similar.
\begin{equation*}
\begin{split}
|\nabla_z T_1(z,z')|
&=\frac{2}{\pi}\bigg|\int_0^\infty\lambda^{d-2}\nabla_z\nabla_z\big(G_1(\lambda z, \lambda z')\big)d\lambda\bigg|\\
&\lesssim\int_0^\infty\lambda^{d-2}\big|\nabla_z^2\big(G_1(\lambda z, \lambda z')\big)\big|d\lambda\\
&\lesssim\int_0^{\frac{1}{\dist(z, z')}}\dist(z, z')^{-d}d\lambda+\int_{\frac{1}{\dist(z, z')}}^\infty\lambda^{d-N}\dist(z, z')^{-N}d\lambda\\
&=\dist(z, z')^{-d-1}+\dist(z, z')^{-N}\int_{\frac{1}{\dist(z, z')}}^\infty\lambda^{d-N}d\lambda\\
&=\dist(z, z')^{-d-1}+\dist(z, z')^{-d-1}\hspace{13mm}(\mbox{Choose }N=d+2.)\\
&=\frac{2}{\dist(z, z')^{d+1}}.
\end{split}
\end{equation*}
This completes the proof.
\end{proof}

By interpolation, we obtain the following proposition. 

\begin{proposition}\label{interpolation}
The operator $T_1$ is bounded on $L^p(M)$ for any $p>1$.
\end{proposition}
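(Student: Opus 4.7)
The plan is to combine the weak-type $(1,1)$ bound of Proposition \ref{T1} with the $L^2$ boundedness of $T_1$ via Marcinkiewicz interpolation to cover the range $1 < p \leq 2$, and then to use a duality argument to extend the range to $2 \leq p < \infty$.

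First I would establish $L^2$ boundedness of $T_1$. We already know from Proposition \ref{bon2} that $T = T_1 + T_2 + T_3$ is bounded on $L^2(M)$. The proof of Proposition \ref{T1} asserts that $T - T_1 = T_2 + T_3$ is bounded on $L^2(M)$; this will be justified in Proposition \ref{T2T3}. Taking this for granted, we conclude that $T_1 = T - (T_2 + T_3)$ is bounded on $L^2(M)$.

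Next I would apply the Marcinkiewicz interpolation theorem to the pair of bounds $T_1 : L^1 \to L^{1,\mathrm{weak}}$ (Proposition \ref{T1}) and $T_1 : L^2 \to L^2$, obtaining boundedness of $T_1$ on $L^p(M)$ for all $p \in (1, 2]$. Since $(M, g)$ is a metric cone with the associated Riemannian measure, it is a space of homogeneous type, so the standard Marcinkiewicz interpolation theorem applies directly.

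For the range $p \in [2, \infty)$ I would argue by duality. The estimates proven in the course of Proposition \ref{T1} are symmetric in $z$ and $z'$: condition (ii) is manifestly symmetric, and condition (iii) provides the Calder\'on--Zygmund gradient estimate in \emph{both} variables. Consequently, the integral kernel of the formal adjoint $T_1^*$ satisfies exactly the same Calder\'on--Zygmund estimates as $T_1$. Since $T_1^*$ is bounded on $L^2(M)$ (by the self-duality of $L^2$ and the $L^2$ boundedness of $T_1$), the same Calder\'on--Zygmund argument used to prove Proposition \ref{T1} shows that $T_1^*$ is of weak type $(1,1)$ on $M$. Applying Marcinkiewicz interpolation to $T_1^*$ gives boundedness of $T_1^*$ on $L^q(M)$ for all $q \in (1, 2]$, which by duality yields boundedness of $T_1$ on $L^p(M)$ for all $p \in [2, \infty)$.

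There is no real obstacle in this argument since all the hard analysis has already been accomplished in Proposition \ref{T1} and Lemma \ref{corofort1}; the only point requiring slight care is the deferred $L^2$ boundedness of $T_2 + T_3$, but this is handled by the forthcoming Proposition \ref{T2T3}, which the proof of Proposition \ref{T1} itself already invokes.
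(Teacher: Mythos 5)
Your proposal is correct and follows essentially the same route as the paper: Marcinkiewicz interpolation from the weak-$(1,1)$ and $L^2$ bounds for the range $1<p\le 2$, followed by the same argument for $T_1^*$ (whose kernel satisfies the same Calder\'on--Zygmund estimates by the symmetry of Lemma~\ref{corofort1}) and duality for $2\le p<\infty$. The paper states this more tersely, but the content is identical.
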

\begin{proof}
By Marcinkiewicz Interpolation Theorem, we know that $T_1$ is bounded on $L^p(M)$ for all $1<p\leq 2$. The same holds for the adjoint of $T_1$. Using duality, we get boundedness for $1 < p < \infty$. 
\end{proof}

\subsection{Off-diagonal region}\label{sectionr2r3}
To study the boundedness of the two off-diagonal operators $T_2$ and $T_3$, the following lemmas will be useful. 
They are similar to \cite[Lemma 5.4]{HS} but not covered by it.

\begin{lemma}\label{lemmaupper}
Consider the kernel $K(r, r')$ defined by
\[K(r, r')=
\begin{cases}
r^{-\alpha}r'^{-\beta},\hspace{5mm}&r\leq r',\\
0,& r> r'.\\
\end{cases}\]
If $\alpha+\beta=d$, $\beta>0$, and $p$ satisfies
\begin{equation}
p<\frac{d}{\max(\alpha, 0)},
\end{equation}
then $K$ is bounded as an operator on $L^p(\mathbb{R}_+; r^{d-1}dr)$.
\end{lemma}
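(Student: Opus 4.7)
The plan is to reduce the operator to a convolution on the additive group $\mathbb{R}$ by passing to logarithmic coordinates, and then to invoke Young's inequality. This works because the kernel $K$ is homogeneous of degree $-d$ relative to the measure $r^{d-1}\,dr$, so the associated integral operator is dilation-invariant. First I would introduce the isometry
\[
U_p : L^p(\mathbb{R}_+; r^{d-1}\,dr) \longrightarrow L^p(\mathbb{R}; ds), \qquad (U_p f)(s) = e^{ds/p}\, f(e^s),
\]
which is unitary because $r^{d-1}\,dr$ becomes $e^{ds}\,ds$ under $r = e^s$. Then the claim reduces to showing boundedness of $U_p T U_p^{-1}$ on $L^p(\mathbb{R};ds)$, where $Tf(r) = \int_r^\infty r^{-\alpha} (r')^{-\beta} f(r')\, (r')^{d-1}\,dr'$.

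Next I would carry out the change of variables $r = e^s$, $r' = e^{s'}$ inside the integral defining $Tf$. Using $\alpha + \beta = d$ to collapse the exponent of $s'$, a direct computation yields
\[
(U_p T U_p^{-1} u)(s) \;=\; \int_s^\infty e^{(\alpha - d/p)(s' - s)}\, u(s')\,ds' \;=\; (L * u)(s),
\]
where $L(t) = e^{(d/p - \alpha)t} \mathbf{1}_{t < 0}$. The operator has therefore been converted into a pure convolution, and I only need $L \in L^1(\mathbb{R})$ to apply Young's inequality $\|L * u\|_{L^p} \leq \|L\|_{L^1}\|u\|_{L^p}$.

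Finally, I would check the integrability condition. We have
\[
\|L\|_{L^1} = \int_{-\infty}^{0} e^{(d/p - \alpha)t}\,dt = \frac{1}{d/p - \alpha},
\]
which is finite precisely when $d/p - \alpha > 0$. If $\alpha \leq 0$ this holds automatically for every $p \in [1, \infty)$; if $\alpha > 0$ it is equivalent to $p < d/\alpha$. In both cases the hypothesis $p < d/\max(\alpha, 0)$ is exactly what is needed, and combined with $U_p$ being an isometry this yields $\|T\|_{L^p \to L^p} \leq (d/p - \alpha)^{-1}$.

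There is no real obstacle beyond the bookkeeping of exponents; the only subtlety is noticing that the homogeneity $\alpha + \beta = d$ is precisely what makes the transformed operator translation-invariant (so Young's inequality applies), while the condition $\beta > 0$ ensures that the original integral defining $Tf$ makes sense for $f$ with sufficient decay so that one may identify $T$ with its convolution avatar on a dense class. I would handle the latter by first establishing the estimate on, say, $C_c(\mathbb{R}_+)$, and then extending by density.
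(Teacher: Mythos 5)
Your proof is correct and follows essentially the same route as the paper: the paper first conjugates by multiplication by $r^{d/p}$ to move to $L^p(\mathbb{R}_+, r^{-1}dr)$ and then substitutes $s = \ln r$, whereas you combine these two steps into the single isometry $U_p$; in both cases the operator becomes a convolution whose kernel is exponential on a half-line, and Young's inequality (i.e., $L^1$ integrability of the convolution kernel) yields the same condition $p < d/\max(\alpha,0)$.
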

\begin{proof}
The proof is essentially taken from \cite{HS}. 
To find out for what $p$ the operator with kernel $K(r, r')$ is bounded on $L^p(\mathbb{R}_+, r^{d-1}dr)$,
we consider the isometry $M: L^p(\mathbb{R}_+, r^{d-1}dr)\rightarrow L^p(\mathbb{R}_+, r^{-1}dr)$ defined by
\[(Mf)(r)=r^{\frac{d}{p}}f(r).\]
Then the kernel of the operator $\tilde{K}=MKM^{-1}: L^p(\mathbb{R}_+, r^{-1}dr)\rightarrow L^p(\mathbb{R}_+, r^{-1}dr)$ is 
\[\tilde{K}(r, r')=r^{\frac{d}{p}}r'^{d-\frac{d}{p}}K(r, r')=(\frac{r}{r'})^{-\alpha+\frac{d}{p}}\chi_{\{ r \leq r'\}}.\]
Perform a substitution $s=\ln r$, $s'=\ln r'$, then $\tilde{K}(s, s')$ is an operator on $L^p(\mathbb{R}, ds)$, and
\[\tilde{K}(s, s')=e^{(-\alpha+\frac{d}{p})(s-s')}\chi_{\{s-s'\leq 0\}}.\]
This is a convolution operator, so it is bounded provided the kernel is an $L^1$-function with variable $s-s'$. 
Since $s-s'\leq 0$, we want $-\alpha+\frac{d}{p}>0$. That is, 
\[p<\frac{d}{\max(\alpha, 0)}.\]
Since we want $p>1$, we require $\alpha < d$, ie $\beta>0$.
\end{proof}

\begin{corollary}\label{lemmaupperc}
Let $K(r, r', y, y')$ be a kernel on the cone $M$ satisfying 
\[|K(r, r', y, y')|\leq
\begin{cases}
r^{-\alpha}r'^{-\beta},\hspace{5mm}&r\leq r',\\
0,& r> r'.\\
\end{cases}\]
If $\alpha+\beta=d$, $\beta>0$, and $p$ satisfies
\begin{equation}
p<\frac{d}{\max(\alpha, 0)},
\end{equation}
then $K$ is bounded as an operator on $L^p(M; r^{d-1}drdh)$.
\end{corollary}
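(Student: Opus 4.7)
The hypothesis provides a pointwise bound on $|K(r,r',y,y')|$ that does not depend on $y$ or $y'$. So the plan is to reduce to the one-variable result of Lemma \ref{lemmaupper} by integrating out the cross-section $Y$, and this should be essentially immediate.

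Concretely, for $f \in L^p(M;\,r^{d-1}dr\,dh)$, set $F(r') := \int_Y |f(r',y')|\,dh'$. Using the kernel bound together with Fubini gives
\[
|(Kf)(r,y)| \leq G(r), \qquad G(r) := \int_0^\infty K_0(r,r')\,F(r')\,r'^{d-1}\,dr',
\]
where $K_0(r,r') = r^{-\alpha}r'^{-\beta}\chi_{\{r \leq r'\}}$ is precisely the one-dimensional kernel of Lemma \ref{lemmaupper}. Since the right-hand side is independent of $y$, one has
\[
\|Kf\|_{L^p(M)} \leq \Vol(Y)^{1/p}\,\|G\|_{L^p(\RR_+;\,r^{d-1}dr)}.
\]

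Now Lemma \ref{lemmaupper} applies directly to the map $F \mapsto G$ under the hypotheses $\alpha+\beta=d$, $\beta>0$, $p<d/\max(\alpha,0)$, producing $\|G\|_{L^p(\RR_+;\,r^{d-1}dr)} \lesssim \|F\|_{L^p(\RR_+;\,r^{d-1}dr)}$. Finally, H\"older's inequality on $Y$ yields $F(r') \leq \Vol(Y)^{1/p'}\|f(r',\cdot)\|_{L^p(Y,dh)}$, and integrating in $r'$ gives $\|F\|_{L^p(\RR_+;\,r^{d-1}dr)} \leq \Vol(Y)^{1/p'}\|f\|_{L^p(M)}$. Chaining the three inequalities then gives $\|Kf\|_{L^p(M)} \lesssim \|f\|_{L^p(M)}$.

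There is no real obstacle here: the corollary is a routine Fubini/H\"older reduction to Lemma \ref{lemmaupper}, the only things to keep track of being the conic measure $r^{d-1}dr\,dh$ and the combination $\Vol(Y)^{1/p} \cdot \Vol(Y)^{1/p'} = \Vol(Y)$ of the two H\"older constants, which is independent of $p$.
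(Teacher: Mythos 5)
Your argument is correct, and it is essentially the paper's proof; the paper simply states that the corollary follows from Lemma \ref{lemmaupper} together with the finite volume of $Y$, and you have filled in the Fubini/H\"older details of that reduction.
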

\begin{proof}
It follows from Lemma \ref{lemmaupper} and 
the fact that the cross section $Y$ has finite volume.
\end{proof}

\begin{lemma}\label{lemmalower}
Consider the kernel $K(r, r')$ defined by
\[K(r, r')=
\begin{cases}
0, &r\leq r',\\
r^{-\alpha}r^{-\beta},\hspace{5mm}& r> r'.\\
\end{cases}\]
If $\alpha+\beta=d$, $\alpha>0$, and $p$ satisfies
\begin{equation}
p>\frac{d}{\min(\alpha, d)},
\end{equation}
then $K$ is bounded as an operator on $L^p(\mathbb{R}_+; r^{d-1}dr)$.
\end{lemma}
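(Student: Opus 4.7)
\medskip
\noindent\textbf{Proof proposal.}
The statement is precisely dual to Lemma \ref{lemmaupper}, with the support of the kernel reflected across the diagonal, so the plan is to follow the same three-step scheme verbatim, merely tracking a sign change.

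First, I would conjugate the integral operator with kernel $K$ by the isometry $M: L^p(\mathbb{R}_+, r^{d-1}dr)\to L^p(\mathbb{R}_+, r^{-1}dr)$ given by $(Mf)(r)=r^{d/p}f(r)$, exactly as in the proof of Lemma \ref{lemmaupper}. A direct computation, using the relation $\alpha+\beta=d$, shows that the conjugated kernel is
\[
\tilde K(r,r')\;=\;r^{d/p}\,{r'}^{\,d-d/p}\,K(r,r')\;=\;\Bigl(\frac{r}{r'}\Bigr)^{-\alpha+d/p}\chi_{\{r>r'\}}.
\]

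Second, I would perform the logarithmic substitution $s=\ln r$, $s'=\ln r'$, converting $L^p(\mathbb{R}_+,r^{-1}dr)$ into $L^p(\mathbb{R},ds)$ isometrically. Under this change, $\tilde K$ becomes the convolution kernel
\[
\tilde K(s,s')\;=\;e^{(-\alpha+d/p)(s-s')}\,\chi_{\{s-s'>0\}}.
\]
By Young's inequality, this convolution is bounded on $L^p(\mathbb{R})$ provided it lies in $L^1(\mathbb{R})$. Since the support is now $s-s'>0$ (the opposite half-line from Lemma \ref{lemmaupper}), integrability requires $-\alpha+d/p<0$, i.e.\ $d/p<\alpha$.

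Third, I would translate this condition back into the form stated in the lemma. When $0<\alpha<d$, the condition $d/p<\alpha$ is exactly $p>d/\alpha=d/\min(\alpha,d)$. When $\alpha\geq d$, we have $d/\min(\alpha,d)=1$, and $d/p<d\leq\alpha$ holds automatically for every $p>1$, which is the full relevant range. Either way the hypothesis $p>d/\min(\alpha,d)$ of the lemma is exactly what is needed. There is no genuine obstacle here: the only thing to be careful about is the direction of the inequality in the exponent (which flips compared to Lemma \ref{lemmaupper} because the support is now the opposite half-line), and the book-keeping of the two regimes $\alpha<d$ versus $\alpha\geq d$ that are packaged in the $\min(\alpha,d)$ notation.
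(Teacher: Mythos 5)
Your proof is correct, but it is not the paper's route. The paper disposes of Lemma~\ref{lemmalower} in one line: the operator with kernel $K$ is, up to complex conjugation, the adjoint (on $L^2(\mathbb{R}_+,r^{d-1}dr)$) of the operator in Lemma~\ref{lemmaupper} with the roles of $\alpha$ and $\beta$ exchanged, and boundedness on $L^p$ is equivalent to boundedness of the adjoint on $L^{p'}$. Running the condition of Lemma~\ref{lemmaupper} (with $\alpha\leftrightarrow\beta$) through $1/p + 1/p' = 1$ yields exactly $p > d/\min(\alpha,d)$, with the two cases $\alpha < d$ and $\alpha \geq d$ falling out of $\max(\beta,0)$ just as they fall out of your $\min(\alpha,d)$. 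You instead re-run the conjugation-plus-Young machinery from scratch; that is an independent and equally valid verification, and it has the minor virtue of not depending on the statement of Lemma~\ref{lemmaupper}, at the cost of being longer than the paper's one-line deduction. Mechanically the two arguments are near-identical, since the duality at the level of the conjugated convolution kernels on $L^p(\mathbb{R},ds)$ is just the substitution $s\mapsto -s$, which is exactly the sign flip you track in your exponent. One small observation worth making explicit: the paper's statement of the kernel has a typo ($r^{-\alpha}r^{-\beta}$ where $r^{-\alpha}{r'}^{-\beta}$ is clearly intended, by analogy with Lemma~\ref{lemmaupper} and by the way it is used in the proof of Proposition~\ref{T3}); you silently and correctly read it the intended way.
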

\begin{proof}
By duality and Lemma \ref{lemmaupper}.
\end{proof}

As before, Lemma \ref{lemmalower} has a corollary
about the boundedness of operators on the cone $M$. 

\begin{corollary}\label{lemmalowerc}
Let $K(r, r', y, y')$ be a kernel on the cone $M$ satisfying
\[|K(r, r', y, y')|\leq
\begin{cases}
0, &r\leq r',\\
r^{-\alpha}r^{-\beta},\hspace{5mm}& r> r'.\\
\end{cases}\]
If $\alpha+\beta=d$, $\alpha>0$, and $p$ satisfies
\begin{equation}
p>\frac{d}{\min(\alpha, d)},
\end{equation}
then $K$ is bounded as an operator on $L^p(M; r^{d-1}drdh)$.
\end{corollary}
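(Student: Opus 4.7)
The plan is to reduce this cone statement to its one-dimensional radial analogue, Lemma \ref{lemmalower}, using only the finiteness of $\Vol(Y)$, exactly as in the one-line proof of Corollary \ref{lemmaupperc}. Equivalently, one can dualize Corollary \ref{lemmaupperc} with respect to the measure $r^{d-1}\,dr\,dh$: this swaps $\alpha\leftrightarrow\beta$, $p\leftrightarrow p'$, and the two regions $\{r\leq r'\}$ and $\{r>r'\}$, and under $\alpha+\beta=d$ the hypothesis $p'<d/\max(\beta,0)$ for the adjoint is precisely the dual of $p>d/\min(\alpha,d)$.

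For the direct argument, given $f\in L^p(M)$ I would set
\[
F(r')=\left(\int_Y |f(r',y')|^p\,dh(y')\right)^{1/p},
\]
so that $\|F\|_{L^p(\mathbb{R}_+;\,r^{d-1}dr)}=\|f\|_{L^p(M)}$. Using the pointwise bound on $K$ together with H\"older's inequality in the $y'$-variable — which is legitimate because $\Vol(Y)<\infty$ and contributes only a constant factor $\Vol(Y)^{1-1/p}$ — I obtain, uniformly in $y$,
\[
\bigl|(Kf)(r,y)\bigr|\;\lesssim\;r^{-\alpha}\int_0^{r} r'^{-\beta}\,F(r')\,r'^{d-1}\,dr'.
\]
The right-hand side is independent of $y$, so taking the $L^p(Y)$-norm in $y$ contributes only a further factor of $\Vol(Y)^{1/p}$. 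Raising to the $p$-th power and integrating in $r^{d-1}\,dr$ then yields
\[
\|Kf\|_{L^p(M)}^p\;\lesssim\;\|\tilde K F\|_{L^p(\mathbb{R}_+;\,r^{d-1}dr)}^p,
\]
where $\tilde K(r,r')=r^{-\alpha}r'^{-\beta}\chi_{\{r>r'\}}$ is exactly the one-dimensional kernel of Lemma \ref{lemmalower}. Invoking that lemma under the hypotheses $\alpha+\beta=d$, $\alpha>0$, $p>d/\min(\alpha,d)$ gives $\|\tilde K F\|_{L^p}\lesssim\|F\|_{L^p}=\|f\|_{L^p(M)}$, completing the proof.

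There is essentially no obstacle here: the numerology $(\alpha,\beta,p)$ in Corollary \ref{lemmalowerc} matches that of Lemma \ref{lemmalower} verbatim, and the passage from the half-line to the cone is a soft slicing argument made possible by the compactness of the cross-section. The only minor point worth flagging is the order of integration — once $|(Kf)(r,y)|$ is bounded by a quantity independent of $y$, integrating over $Y$ first is trivial, which is what collapses the cone estimate onto the radial one.
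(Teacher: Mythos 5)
Your proof is correct and is essentially the paper's own argument: the paper states Corollary \ref{lemmalowerc} with the implicit proof (by analogy to Corollary \ref{lemmaupperc}) that it ``follows from Lemma \ref{lemmalower} and the fact that the cross section $Y$ has finite volume,'' which is exactly the slicing-plus-H\"older reduction you carry out in detail. Your side remark about dualizing Corollary \ref{lemmaupperc} is also valid and matches the way Lemma \ref{lemmalower} itself is derived from Lemma \ref{lemmaupper} by duality.
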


\begin{proposition}\label{T2}
The operator $T_2$ is bounded on $L^p(M)$ for 
\begin{equation}\label{T2range}
p<\frac{d}{\max(\frac{d}{2}-\mu_0, 0)},
\end{equation}
where $\mu_0>0$ is the square root of the smallest eigenvalue of the operator $\Delta_Y+V_0(y)+(\frac{d-2}{2})^2$.
\end{proposition}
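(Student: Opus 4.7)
The plan is to establish a pointwise bound on the kernel $T_2(z,z')$ and then apply Corollary \ref{lemmaupperc}. Since $G_2 = G\chi(4r/r')$ is supported in $\{r \leq r'/4\}$, on its support it encounters only the boundary hypersurfaces $\lbz, \zf, \bfc$ and $\rbi$, and is uniformly bounded away from the diagonal. By Theorem \ref{Gproperties}, $G_2$ is polyhomogeneous conormal and vanishes, as a coefficient of the Riemannian half-density, to order $1-\tfrac{d}{2}+\mu_0$ at $\lbz$, to order $2-d$ at $\zf$, and to infinite order at $\bfc$ and $\rbi$.

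Combining these vanishing orders at the corner $\zf\cap\lbz$ using $r'$ as boundary defining function for $\zf$ and $s = r/r'$ for $\lbz$ (so that $(r')^{2-d} s^{1-d/2+\mu_0} = r^{1-d/2+\mu_0}(r')^{1-d/2-\mu_0}$) yields the uniform estimate
\[
|G_2(z,z')| \lesssim r^{1-\frac{d}{2}+\mu_0}(r')^{1-\frac{d}{2}-\mu_0}\langle r'\rangle^{-N}
\]
on $\supp G_2$ for any $N>0$, the decay factor $\langle r'\rangle^{-N}$ encoding the infinite-order vanishing at $\bfc$ and $\rbi$. Polyhomogeneity away from the diagonal then implies that a single derivative $\partial_r$ or $r^{-1}\partial_y$ lowers the power of $r$ by one, so
\[
|\nabla_z G_2(z,z')|\lesssim r^{-\frac{d}{2}+\mu_0}(r')^{1-\frac{d}{2}-\mu_0}\langle r'\rangle^{-N}.
\]
Plugging this into Proposition \ref{esimateonT} and using the chain rule, the $\lambda$-powers cancel exactly (reflecting the $-2$ homogeneity of $H$), giving
\[
|T_2(z,z')|\lesssim r^{-\frac{d}{2}+\mu_0}(r')^{1-\frac{d}{2}-\mu_0}\int_0^\infty \langle\lambda r'\rangle^{-N}\,d\lambda \lesssim r^{-(\frac{d}{2}-\mu_0)}(r')^{-(\frac{d}{2}+\mu_0)},
\]
after substituting $u=\lambda r'$ and choosing $N>1$.

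Finally, with $\alpha = \tfrac{d}{2}-\mu_0$ and $\beta = \tfrac{d}{2}+\mu_0$, one has $\alpha+\beta = d$ and $\beta > 0$; since the support of $T_2$ lies in $\{r\leq r'/4\} \subset \{r\leq r'\}$, Corollary \ref{lemmaupperc} immediately yields $L^p$ boundedness of $T_2$ exactly in the range \eqref{T2range}. The main obstacle is establishing the combined uniform pointwise estimate on $|G_2|$ and $|\nabla_z G_2|$ across the corner $\zf \cap \lbz$, which requires the vanishing orders at two intersecting hypersurfaces to assemble consistently through the relevant boundary defining functions; once that is in place, the $\lambda$-integration and the application of Corollary \ref{lemmaupperc} are routine.
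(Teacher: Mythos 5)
Your proposal is correct and follows essentially the same route as the paper: combine the vanishing orders of $G_2$ at $\zf$ and $\lbz$ (with rapid decay at $\rbi$ and $\bfc$) to get the uniform pointwise bound $|G_2|\lesssim r^{1-\frac{d}{2}+\mu_0}(r')^{1-\frac{d}{2}-\mu_0}\langle r'\rangle^{-N}$, lower the power of $r$ by one on applying $\nabla_z$, insert into the $\lambda$-integral where the $\lambda$-powers cancel, and apply Corollary~\ref{lemmaupperc}. Your one-shot $\lambda$-integration, keeping the factor $\langle\lambda r'\rangle^{-N}$ and substituting $u=\lambda r'$, is a slightly tidier version of the paper's split of the integral into $\lambda\le 1/r'$ and $\lambda\ge 1/r'$, but the computations are equivalent.

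One small remark: the paper inserts the caveat ``if $\mu_0\neq \frac{d}{2}-1$'' before its gradient estimate; this is only about sharpness, not validity --- when $\mu_0=\frac{d}{2}-1$ the leading term in the $\lbz$-expansion is constant in $r$, so $\nabla_z$ actually improves the order of vanishing (which the paper later exploits to prove Theorem~\ref{HQLR}). The crude bound $|\nabla_z G_2|\lesssim r^{-\frac{d}{2}+\mu_0}(r')^{1-\frac{d}{2}-\mu_0}\langle r'\rangle^{-N}$ that you use is still valid in that case, so your argument holds for all admissible $\mu_0$, but flagging this point would align your presentation with the paper's.
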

\begin{proof}
We will use the following three boundary defining functions
\[\rho_{\zf}=r',\hspace{2mm}\rho_{\lbz}=\frac{r}{r'},\hspace{2mm}\rho_{\rbi}=\langle r'\rangle^{-1}.\]
 By Theorem \ref{Gproperties}, we have
\begin{equation}\label{G2bound}
|G_2(r, r', y, y')|\lesssim\rho_{\zf}^{2-d}\rho_{\lbz}^{1-\frac{d}{2}+\mu_0}\rho_{\rbi}^\infty=r^{1-\frac{d}{2}+\mu_0}r'^{1-\frac{d}{2}-\mu_0}\langle r'\rangle^{-\infty},
\end{equation}
where $\mu_0>0$ is the square root of the smallest eigenvalue of the operator $\Delta_Y+V_0(y)+(\frac{d-2}{2})^2$.
From here we know that 
\[|G_2(\lambda r, \lambda r', y, y')|\lesssim
\begin{cases}
\lambda^{2-d}r^{1-\frac{d}{2}+\mu_0}r'^{1-\frac{d}{2}-\mu_0},\hspace{5mm}&\lambda\leq\frac{1}{r'},\\
\lambda^{1-\frac{d}{2}+\mu_0-N}r^{1-\frac{d}{2}+\mu_0}r'^{-N},&\lambda\geq\frac{1}{r'},
\end{cases}\]
for all $N>0$. That means, by polyhomogeneous conormality of $G_2$, 
if $\mu_0\ne\frac{d}{2}-1$, we have
\begin{equation}\label{gradient}
\big|\nabla_{z}\big(G_2(\lambda r, \lambda r', y, y')\big)\big|\lesssim
\begin{cases}
\lambda^{2-d}r^{-\frac{d}{2}+\mu_0}r'^{1-\frac{d}{2}-\mu_0},\hspace{5mm}&\lambda\leq\frac{1}{r'},\\
\lambda^{1-\frac{d}{2}+\mu_0-N}r^{-\frac{d}{2}+\mu_0}r'^{-N},&\lambda\geq\frac{1}{r'},
\end{cases}
\end{equation}
for all $N>0$. 

Using \eqref{Tidefn} and the fact that $G_2$ is supported where $r \leq r'$, we  estimate 
\begin{equation*}%\label{mainestimate}
\begin{split}
|T_2(r, r', y, y')|&\lesssim\int_0^\infty\lambda^{d-2}\big|\nabla_{z}\big(G_2(\lambda r, \lambda r', y, y')\big)\big|d\lambda\\
&\lesssim\int_0^\frac{1}{r'}\lambda^{d-2}\big(\lambda^{2-d}r^{-\frac{d}{2}+\mu_0}r'^{1-\frac{d}{2}-\mu_0}\big)d\lambda+
\int_{\frac{1}{r'}}^{\frac{1}{r}}\lambda^{d-2}\big(\lambda^{1-\frac{d}{2}+\mu_0-N}r^{-\frac{d}{2}+\mu_0}r'^{-N}\big)d\lambda\\
&=r^{-\frac{d}{2}+\mu_0}r'^{1-\frac{d}{2}-\mu_0}\int_0^\frac{1}{r'}d\lambda+
r^{-\frac{d}{2}+\mu_0}r'^{-N}\int_{\frac{1}{r'}}^{\frac{1}{r}}\lambda^{\frac{d}{2}+\mu_0-N-1}d\lambda\\
&=r^{-\frac{d}{2}+\mu_0}r'^{-\frac{d}{2}-\mu_0}+\frac{1}{\frac{d}{2}+\mu_0-N}\big(r^{N-d}r'^{-N}-r^{-\frac{d}{2}+\mu_0}r'^{-\frac{d}{2}-\mu_0}\big)\\
&\lesssim r^{-\frac{d}{2}+\mu_0}r'^{-\frac{d}{2}-\mu_0} \quad \text{ for } N >\mu_0+\frac{d}{2} \\
%&\hspace{10mm}(\mbox{Choose $N=\mu_0+d$ so that the coefficient $\frac{1}{\frac{d}{2}+\mu_0-N}$ is negative.})\\
&=\Big(\frac{r}{r'}\Big)^{\mu_0-\frac{d}{2}}r'^{-d}.\\
\end{split}
\end{equation*}
%As the support  of $T_2$ is contained in the region $r\leq r'$, we have
%\[|T_2(r, r', y, y')|\lesssim \Big(\frac{r}{r'}\Big)^{\mu_0-\frac{d}{2}}r'^{-d}\chi_{\{r\leq r'\}}.\]
By Corollary \ref{lemmaupperc}, we conclude that $T_2$ is bounded on $L^p(M)$ provided that
\[p<\frac{d}{\max(\frac{d}{2}-\mu_0, 0)}.\]
\end{proof}

\begin{remark}
When $V \equiv 0$, then $\mu_0=\frac{d}{2}-1$,
 and its first eigenfunction $u_0$ is a constant function. 
In Section~\ref{sec:proofs}
 we will improve estimate (\ref{gradient}) to obtain a bigger range for $p$ for this special case.
\end{remark}

\begin{proposition}\label{T3}
The operator $T_3$ is bounded on $L^p(M)$ for 
\[p>\frac{d}{\min(1+\frac{d}{2}+\mu_0, d)},\]
where $\mu_0>0$ is the square root of the smallest eigenvalue of the operator $\Delta_Y+V_0(y)+(\frac{d-2}{2})^2$.
\end{proposition}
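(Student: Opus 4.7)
The argument for $T_3$ is dual to that of $T_2$: now the cutoff $\chi(4r'/r)$ localizes to $r'/r\le 1/4$, a region whose relevant boundary hypersurfaces are $\zf$, $\rbz$ and $\lbi$ only. Using Theorem~\ref{Gproperties} with the boundary defining functions $\rho_{\zf}=r$ and $\rho_{\rbz}=r'/r$, together with the infinite-order vanishing at $\lbi$, one obtains
\[
|G_3(r,r',y,y')|\lesssim \rho_{\zf}^{2-d}\rho_{\rbz}^{1-\frac{d}{2}+\mu_0}\langle r\rangle^{-\infty}
= r^{1-\frac{d}{2}-\mu_0}\,r'^{\,1-\frac{d}{2}+\mu_0}\,\langle r\rangle^{-\infty}.
\]
The crucial bookkeeping step is to differentiate in $z=(r,y)$, which is now the \emph{larger} variable. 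Since $r\partial_r$ and $\partial_y$ are $b$-vector fields at $\zf$ and $\rbz$ and therefore preserve the vanishing orders there, while $\nabla_z=(\partial_r,r^{-1}\partial_y)$ differs from these by a factor of $r^{-1}=\rho_{\zf}^{-1}$, each component of $\nabla_z$ costs exactly one power of $\rho_{\zf}$, yielding
\[
|\nabla_z G_3(r,r',y,y')|\lesssim r^{-\frac{d}{2}-\mu_0}\,r'^{\,1-\frac{d}{2}+\mu_0}\,\langle r\rangle^{-\infty}.
\]

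Next I would scale and integrate exactly as for $T_2$. The chain rule gives $|\nabla_z(G_3(\lambda r,\lambda r',y,y'))|\lesssim\lambda\,|(\nabla G_3)(\lambda r,\lambda r',y,y')|$, which is bounded by $\lambda^{2-d}r^{-\frac{d}{2}-\mu_0}r'^{\,1-\frac{d}{2}+\mu_0}$ for $\lambda r\le 1$, and by $\lambda^{2-\frac{d}{2}+\mu_0-N}r^{-N}r'^{\,1-\frac{d}{2}+\mu_0}$ for $\lambda r\ge 1$ and any $N>0$. Applying Proposition~\ref{esimateonT} and splitting the $\lambda$-integral at $\lambda=1/r$, the $\zf$-piece contributes $r^{-\frac{d}{2}-\mu_0}r'^{\,1-\frac{d}{2}+\mu_0}\cdot(1/r)$, while for $N>1+d/2+\mu_0$ the $\lbi$-tail integrates to a constant multiple of the same quantity. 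Combining,
\[
|T_3(r,r',y,y')|\lesssim r^{-1-\frac{d}{2}-\mu_0}\,r'^{\,1-\frac{d}{2}+\mu_0}, \qquad r\ge 4r'.
\]

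Finally, this is precisely a kernel to which Corollary~\ref{lemmalowerc} applies, with $\alpha=1+\frac{d}{2}+\mu_0$ and $\beta=-1+\frac{d}{2}-\mu_0$, so that $\alpha+\beta=d$ and $\alpha>0$. The corollary immediately yields $L^p$-boundedness for $p>d/\min(1+\frac{d}{2}+\mu_0,d)$, which is the claimed range. The only point requiring care is verifying that the tail contribution from $\lambda r\ge 1$ (near $\lbi$) does not dominate; this is where one must choose $N>1+\frac{d}{2}+\mu_0$ so that $\int_{1/r}^\infty\lambda^{\frac{d}{2}+\mu_0-N}d\lambda$ converges and reproduces the same power of $r$ as the near-$\zf$ contribution.
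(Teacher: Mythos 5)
Your proof is correct and follows essentially the same approach as the paper: the same boundary defining functions, the same estimate on $G_3$ from Theorem~\ref{Gproperties}, the same scaling argument to bound the $\lambda$-integral (you integrate to $\infty$ using the rapid decay at $\lbi$, whereas the paper cuts off at $1/r'$; both give the bound $r^{-1-\frac{d}{2}-\mu_0}r'^{\,1-\frac{d}{2}+\mu_0}$), and the same application of Corollary~\ref{lemmalowerc} with $\alpha = 1+\frac{d}{2}+\mu_0$.
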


\begin{proof}
We will use the following three boundary defining functions
\[\rho_{\zf}=r,\hspace{2mm}\rho_{\rbz}=\frac{r'}{r},\hspace{2mm}\rho_{\lbi}=\langle r\rangle^{-1}.\]
By Theorem \ref{Gproperties}, we have
\begin{equation}\label{G3bound}
|G_3(r, r', y, y')|\lesssim\rho_{\zf}^{2-d}\rho_{\rbz}^{1-\frac{d}{2}+\mu_0}\rho_{\lbi}^\infty=r^{1-\frac{d}{2}-\mu_0}r'^{1-\frac{d}{2}+\mu_0}\langle r\rangle^{-\infty},
\end{equation}
where $\mu_0>0$ is the square root of the smallest eigenvalue of the operator $\Delta_Y+V_0(y)+(\frac{d-2}{2})^2$. 
It follows that, as in the proof of Proposition \ref{T2}, by the polyhomogeneous conormality of $G_3$,
\[\big|\nabla_{z}\big(G_3(\lambda r, \lambda r', y, y')\big)\big|\lesssim
\begin{cases}
\lambda^{2-d}r^{-\frac{d}{2}-\mu_0}r'^{1-\frac{d}{2}+\mu_0},\hspace{5mm}&\lambda\leq\frac{1}{r},\\
\lambda^{-\frac{d}{2}+\mu_0-N+1}r^{-N-1}r'^{1-\frac{d}{2}+\mu_0},&\lambda\geq\frac{1}{r},
\end{cases}\]
for all $N>0$. Then using \eqref{Tidefn} and the fact that $G_3$ is supported where $r' \leq r$, we have 
\begin{equation*}
\begin{split}
|T_3(r, r', &y, y')|\lesssim\int_0^\infty\lambda^{d-2}\big|\nabla_{z}\big(G_3(\lambda r, \lambda r', y, y')\big)\big|d\lambda\\
&\lesssim \int_0^\frac{1}{r}\lambda^{d-2}\big(\lambda^{2-d}r^{-\frac{d}{2}-\mu_0}r'^{1-\frac{d}{2}+\mu_0}\big)d\lambda+
\int_{\frac{1}{r}}^{\frac{1}{r'}}\lambda^{d-2}\big(\lambda^{-\frac{d}{2}+\mu_0-N+1}r^{-N-1}r'^{1-\frac{d}{2}+\mu_0}\big) d\lambda\\
&=r^{-\frac{d}{2}-\mu_0}r'^{1-\frac{d}{2}+\mu_0}\int_0^\frac{1}{r}d\lambda+
r^{-N-1}r'^{1-\frac{d}{2}+\mu_0}\int_{\frac{1}{r}}^{\frac{1}{r'}}\lambda^{\frac{d}{2}+\mu_0-N-1}d\lambda\\
&=r^{-1-\frac{d}{2}-\mu_0}r'^{1-\frac{d}{2}+\mu_0}+\frac{1}{\frac{d}{2}+\mu_0-N}\big(r^{-N-1}r'^{N-d+1}-r^{-1-\frac{d}{2}-\mu_0}r'^{1-\frac{d}{2}+\mu_0}\big)\\
&\lesssim r^{-1-\frac{d}{2}-\mu_0}r'^{1-\frac{d}{2}+\mu_0} \quad \text{ for } N >\mu_0+\frac{d}{2}\\
%&\hspace{10mm}(\mbox{Choose $N=\mu_0+d$ so that the coefficient $\frac{1}{\frac{d}{2}+\mu_0-N}$ is negative.})\\
&=\Big(\frac{r'}{r}\Big)^{\mu_0-\frac{d}{2}+1}r^{-d}.\\
\end{split}
\end{equation*}
%As $T_3$ is supported in the region $r\geq r'$, we have
%\[|T_3(r, r', y, y')|\lesssim \Big(\frac{r'}{r}\Big)^{\mu_0-\frac{d}{2}+1}r^{-d}\chi_{\{r\geq r'\}}.\]
Applying Corollary \ref{lemmalowerc}, we conclude that $T_3$ is bounded on $L^p(M)$ provided that
\[p>\frac{d}{\min(1+\frac{d}{2}+\mu_0, d)}.\]
\end{proof}

\begin{proposition}\label{T2T3}
The operator $T_1$ is bounded on $L^2(M)$.
\end{proposition}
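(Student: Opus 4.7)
The plan is to establish the $L^2(M)$ boundedness of $T_1$ by splitting $T_1 = T - (T_2 + T_3)$ and bounding each summand separately. (The label \texttt{T2T3} and the role of this proposition in the proof of Proposition~\ref{T1} make clear that the real content is the $L^2$-boundedness of $T_2 + T_3 = T - T_1$; combined with Proposition~\ref{bon2} this immediately yields the stated $L^2$-boundedness of $T_1$.) The key inputs are already in hand: Proposition~\ref{bon2} gives $L^2$-boundedness of the full Riesz transform $T$ via a positivity/spectral argument, while Propositions~\ref{T2} and~\ref{T3} give $L^p$-boundedness of $T_2$ and $T_3$ on explicit $p$-intervals. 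All that remains is to verify that $p = 2$ lies in both intervals.

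For $T_2$, Proposition~\ref{T2} allows any $p < d/\max(d/2 - \mu_0, 0)$. If $\mu_0 \geq d/2$ the upper endpoint is $+\infty$ and there is nothing to check; otherwise $0 < \mu_0 < d/2$ and the upper endpoint equals $2d/(d - 2\mu_0)$, which is strictly greater than $2$ because $\mu_0 > 0$ forces $d - 2\mu_0 < d$. So $p = 2$ always belongs to the interval and $T_2$ is bounded on $L^2(M)$. For $T_3$, Proposition~\ref{T3} allows any $p > d/\min(1 + d/2 + \mu_0, d)$. If the minimum equals $d$ the lower endpoint is $1$; otherwise it equals $2d/(2 + d + 2\mu_0)$, and since $\mu_0 > 0$ the denominator $2 + d + 2\mu_0$ exceeds $d$, pushing the endpoint strictly below $2$. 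Either way $p = 2$ lies in the interval and $T_3$ is bounded on $L^2(M)$.

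Adding, $T_2 + T_3$ is bounded on $L^2(M)$, and hence so is $T_1 = T - (T_2 + T_3)$. There is no genuine obstacle here: the only work is the two elementary inequalities above, both of which use crucially the strict positivity $\mu_0 > 0$ inherited from the hypothesis $\Delta_Y + V_0 + ((d-2)/2)^2 > 0$. One mild thing to note is that Proposition~\ref{T2}'s proof as written imposed $\mu_0 \neq d/2 - 1$; if this case needs to be covered here, the refined gradient estimate for $G_2$ promised in the remark after Proposition~\ref{T2} (for $V \equiv 0$) provides an alternative bound yielding the same $L^2$ conclusion, so the argument goes through without change.
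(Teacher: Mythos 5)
Your proof is correct and matches the paper's argument exactly: write $T_1 = T - (T_2 + T_3)$, invoke Proposition~\ref{bon2} for the $L^2$-boundedness of $T$, and check that $p=2$ lies in the intervals of Propositions~\ref{T2} and~\ref{T3}. The extra verification that $p=2$ lies in those intervals, and the side remark about the $\mu_0 = d/2 - 1$ case, are harmless elaborations of the same route.
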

\begin{proof}
Since $2$ satisfies the boundedness criteria in both Proposition \ref{T2} and Proposition \ref{T3}, 
the operator $T_2+T_3=T-T_1$ is bounded on $L^2(M)$. 
The operator $T$ is bounded on $L^2(M)$ by Proposition \ref{bon2}, 
and from here the boundedness of $T_1$ on $L^2(M)$ follows.
\end{proof}

\begin{remark}
Proposition \ref{T2T3} completes the missing part in the proof of Proposition \ref{T1}.
\end{remark}

\subsection{Proofs of main results}\label{sec:proofs}

\begin{proof}[Proof of Theorem~\ref{fmain}]
Since $T=T_1+T_2+T_3$, we just combine Proposition \ref{interpolation}, Proposition \ref{T2} and Proposition \ref{T3} 
to prove the first part of this theorem.

For the second part, with $V\not\equiv 0$, for $p$ outside the interval (\ref{ccinterval}), the counterexamples from \cite[Section 5.2]{GH} serve to show the lack of boundedness of $T$ on $L^p(M)$. (For purposes of comparison, note that 
the variables $x$ and $x'$ in \cite{GH} 
correspond to $\frac{1}{r}$ and $\frac{1}{r'}$ in this paper.)
\end{proof}

\begin{proof}[Proof of Theorem~\ref{HQLR}] 
Suppose that the potential $V$ is identically zero; we proceed to show that the upper threshold for $L^p$ boundedness is $p = d (d/2 - \mu_1)^{-1}$. Notice that $T_1$ and $T_3$ are automatically bounded on this extra range, so we only have to consider $T_2$, which has an expression of the form  
\begin{equation}
T_2(z,z') = \frac{2}{\pi} \chi(4r/r') \int_0^\infty\lambda^{d-2}\nabla_z\big(G(\lambda z, \lambda z')\big) d\lambda.\
\label{T2int}\end{equation}
We recall that $G = r r'\tilde G$ and substitute the infinite series \eqref{infiniteseries} for $\tilde G$ here, and consider the first term in this sum separately from the rest.  Since $\mu_0 = d/2 - 1$ when $V_0 = 0$, the first term here is (as a multiple of the Riemannian half-density --- recall this gives us an extra factor of $(r r')^{-d/2}$, as in \eqref{KKtilde}) 
$$
(r r')^{1-d/2} u_0(y) u_0(y') I_{d/2 - 1}(r) K_{d/2 - 1}(r')
$$
When $V_0 = 0$, the eigenfunction $u_0(y)$ is constant. Also, $I_{d/2 - 1}(r) = c r^{d/2 - 1} + O(r^{d/2 + 1})$ and is conormal at $r=0$, implying that $\nabla_r (r^{1-d/2} I_{d/2 - 1}(r)) = O(r)$. Hence $$\nabla_z ( r^{1-d/2} u_0(y) I_{d/2 - 1}(r) ) = O(r);$$ that is, in this special case, applying the derivative $\nabla_z$ makes the kernel vanish to an additional order, instead of one order less as is usually the case. 
Therefore, after taking the gradient in the left variables, this term is bounded by 
$$
\begin{cases} C r {r'}^{2-d}, \quad r' \leq 1 \\
C r {r'}^{-N}  , \quad r' \geq 1 
\end{cases}
$$
for any integer $N$. 
Now we put this in \eqref{T2int} and find that the contribution to $T_2$ of the $\mu_0$-term is bounded by 
$$\begin{gathered}
\int_0^{1/r'} \lambda^{2-d} \Big( \lambda^{d} r {r'}^{2-d} \Big) \, d\lambda + \int_{1/r'}^\infty \lambda^{2-d} \Big( \lambda^{2-N} r {r'}^{-N} \Big) \, d\lambda \\
\leq C r {r'}^{-1-d}. 
\end{gathered}$$
Remembering that this term is supported in $\{ r \leq r' \}$, we see from Corollary~\ref{lemmaupperc} that this term is bounded on $L^p$ for all $p \in (1, \infty)$. 

So consider the remainder of the series. The argument in the previous subsection applies, except that the series now begins with the $\mu_1$ term rather than the $\mu_0$ term, so we have boundedness in the range \eqref{T2range} with $\mu_1$ replacing $\mu_0$, completing the proof. 
\end{proof}


\begin{thebibliography}{99}

\bibitem{AS}
{M.~Abramowitz and I. A.~Stegun,}
\newblock {\em Handbook of Mathematical Functions: with Formulas, Graphs, and Mathematical Tables,}
\newblock Dover, (1965).



\bibitem{GA92} 
{G.~Alexopoulos,}
\newblock \emph{An application of homogenization theory to harmonic analysis: Harnack inequalities and Riesz transforms on Lie groups of polynomial growth}, Canad. J. Math. 44 (1992), no. 4, 691-727. 

\bibitem{GA02}
{G.~Alexopoulos,}
\newblock \emph{Centered densities on Lie groups of polynomial volume growth}, Probab. Theory Related Fields 124 (2002), no. 1, 112-150. 

\bibitem{JA}
{J.~Assaad,}
\newblock {\em Riesz transforms associated to Schr\"odinger operators with negative potentials,}
\newblock Publ. Mat. 55 (2011), no. 1, 123-150.

\bibitem{AO} {J.~Assaad and E. Ouhabaz,}
\newblock {\em Riesz transforms of Schr\"odinger operators on manifolds,}
\newblock J. Geom. Anal., to appear. 


\bibitem{ABA}
{P.~Auscher and B.~Ben Ali,}
\newblock {\em Maximal inequalities and Riesz transform estimates on $L^p$ spaces for Schr\"odinger operators with nonnegative potentials,}
\newblock Ann. Inst. Fourier (Grenoble) 57 (2007), no. 6, 1975-2013.

\bibitem{ACDH}
{P.~Auscher, T.~Coulhon, X.~Duong, S.~Hofmann,} 
\newblock \emph{Riesz transform on manifolds and heat kernel regularity}, Ann. Sci. \'Ecole Norm. Sup. (4) 37 (2004), no. 6, 911-957. 

\bibitem{AC} {P.~Auscher, T.~Coulhon,}
\newblock \emph{Riesz transform on manifolds and Poincar\'e inequalities}, Ann. Sc. Norm. Super. Pisa Cl. Sci. (5) 4 (2005), no. 3, 531-555. 

%\bibitem[AGHH]{AGHH}{S.~Albeverio, F.~Gesztesy, R.~H{\o}egh-Krohn and H.~Holden,}\newblock {\em Solvable Models in Quantum Mechanics,}\newblock Springer-Verlag, (1988).

\bibitem{ASO}
{A. Sommerfeld,}
\newblock {\em Mathematische Theorie der Diffraction,}
\newblock Math. Ann. 47 (1896), no. 2-3, 317-374.

\bibitem{BKeller}
{A.~Blank and J. B.~Keller,}
\newblock {\em Diffraction and reflection of pulses by wedges and corners,}
\newblock Comm. Pure Appl. Math. 4, (1951), 75-94.

\bibitem{BK} {S.~Blunck, P.~C.~Kunstmann,}
\newblock \emph{Weak type (p,p) estimates for Riesz transforms}, Math. Z. 247(2004), no. 1, 137-148.

\bibitem{BS}
{J.~Br\"uning and R.~Seeley,}
\newblock {\em The resolvent expansion for second order regular singular operators,}
\newblock J. Funct. Anal. 73 (1987), no. 2, 369-429.



\bibitem{BPSTZ}
{N.~Burq, F.~Planchon, J. G.~Stalker and A. S.~Tahvildar-Zadeh,}
\newblock {\em Strichartz estimates for the wave and Schr\"odinger equations with the inverse-square potential,}
\newblock J. Funct. Anal. 203 (2003), no. 2, 519-549.

\bibitem{GC}
{G.~Carron,}
\newblock {\em Le saut en z\'ero de la fonction de d\'ecalage spectral,}
\newblock J. Funct. Anal. 212 (2004), no. 1, 222-260.


\bibitem{CCH}
{G.~Carron, T.~Coulhon and A.~Hassell,}
\newblock {\em Riesz transform and $L^p$-cohomology for manifolds with Euclidean ends,}
\newblock Duke Math. J. 133 (2006), no. 1, 59-93.

\bibitem{CT}
{J.~Cheeger and M. Taylor,}
\newblock {\em On the Diffraction of Waves by Conical Singularities. I,}
\newblock Comm. Pure Appl. Math. 35 (1982), no. 3, 275-331.

\bibitem{CT2}
{J.~Cheeger and M. Taylor,}
\newblock {\em On the Diffraction of Waves by Conical Singularities. II,}
\newblock Comm. Pure Appl. Math. 35 (1982), no. 4, 487-529.

\bibitem{JC}
{J.~Cheeger,}
\newblock {\em On the spectral geometry of spaces with cone-like singularities,}
\newblock Proc. Nat. Acad. Sci. U.S.A. 76 (1979), no. 5, 2103-2106.



\bibitem{CD}
{T.~Coulhon and X. T.~Duong,}
\newblock {\em Riesz transforms for $1\leq p\leq 2$,}
\newblock Trans. Amer. Math. Soc. 351 (1999), no. 3, 1151-1169.

\bibitem{CD03} {T.~Coulhon, X.~Duong, }
\newblock \emph{Riesz transform and related inequalities on noncompact Riemannian manifolds} Comm. Pure Appl. Math. 56 (2003), no. 12, 1728-1751.

\bibitem{CND} {T.~Coulhon, N.~Dungey,}
\newblock \emph{Riesz transform and perturbation}, J. Geom. Anal. 17(2007), no. 2, 213-226. 

%\bibitem{DOY}
%{X. T.~Duong, E. M.~Ouhabaz and L.~Yan,}
%\newblock {\em Endpoint estimates for Riesz transforms of magnetic Schr\"odinger operators,}
%\newblock Ark. Mat. 44 (2006), no. 2, 261-275.

\bibitem{ERS} {A.~F.~M.~ter Elst, D.~Robinson, A.~Sikora,}
\newblock \emph{Heat kernels and Riesz transforms on nilpotent Lie groups} Colloq. Math. 74 (1997), no. 2, 191-218.

\bibitem{ERS01} {A.~F.~M.~ter Elst, D.~Robinson, A.~Sikora,}
\newblock \emph{On second-order periodic elliptic operators in divergence form,} Math. Z. 238 (2001), no. 3, 569-637.

\bibitem{FGF}
{F. G.~Friedlander,}
\newblock {\em The diffraction of sound pulses. I. Diffraction by a semi-infinite plane,}
\newblock Proc. Roy. Soc. London. Ser. A. 186, (1946), 322-344.

\bibitem{FGF2}
{F. G.~Friedlander,}
\newblock {\em The diffraction of sound pulses. II. Diffraction by an infinite wedge,}
\newblock Proc. Roy. Soc. London. Ser. A. 186, (1946), 344-351.


%\bibitem[GC2]{GC2}{G.~Carron,}\newblock {\em Riesz transforms on connected sums,}\newblock Ann. Inst. Fourier (Grenoble) 57 (2007), no. 7, 2329-2343.


\bibitem{LG}
{L.~Grafakos,}
\newblock {\em Modern Fourier Analysis,}
\newblock Second Edition, 
\newblock Springer, (2009).



\bibitem{GD}
{D.~Grieser}
\newblock {\em Basics of the $b$-Calculus,}
\newblock arXiv:math/0010314v1

\bibitem{GH}
{C.~Guillarmou and A.~Hassell,}
\newblock {\em Resolvent at low energy and Riesz transform for Schr{\" o}dinger operators on asymptotically conic manifolds. I,}
\newblock Math. Ann. 341 (2008), no. 4, 859-896.

\bibitem{GH2}
{C.~Guillarmou and A.~Hassell,}
\newblock {\em Resolvent at low energy and Riesz transform for Schr{\" o}dinger operators on asymptotically conic manifolds. II,}
\newblock Ann. Inst. Fourier (Grenoble) 59 (2009), no. 4, 1553-1610.

\bibitem{GHS}
{C.~Guillarmou, A.~Hassell and A.~Sikora,}
\newblock {\em Resolvent at low energy III: the spectral measure,}
\newblock arXiv:1009.3084.

\bibitem{HS}
{A.~Hassell and A.~Sikora,}
\newblock {\em Riesz Transforms in One Dimension,}
\newblock Indiana Univ. Math. J. 58 (2009), no. 2, 823-852.

\bibitem{LH2}
{L.~H{\" o}rmander,}
\newblock {\em The spectral function of an elliptic operator,}
\newblock Acta Math. 121 (1968), 193-218.




\bibitem{LH3}
{L.~H{\" o}rmander,}
\newblock {\em The Analysis of Linear Partial Differential Operators III,}
\newblock Springer, (1985).



\bibitem{HQL}
{H.-Q.~Li,}
\newblock {\em La transform\'ee de Riesz sur les vari\'et\'es coniques,}
\newblock J. Funct. Anal. 168(1), 145-238 (1999).



%\bibitem[LH]{LH}{L.~Hillairet,}\newblock {\em Formule de trace semi-classique sur une vari\'et\'e de dimension 3 avec un potentiel de Dirac,}\newblock Comm. Partial Differential Equations 27 (2002), no. 9-10, 1751-1791.

\bibitem{EM}
{E.~Mooers,}
\newblock {\em Heat Kernel Asymptotics on Manifolds with Conic Singularities,}
\newblock J. Anal. Math. 78 (1999), 1-36.


%\bibitem[LP]{LP}{P.~Lax and R.~Phillips,}\newblock {\em Scattering Theory,}\newblock Revised Edition,\newblock Academic Press, (1989).



%\bibitem{MT}
%{M.~Taylor,}
%\newblock {\em Partial Differential Equations: Basic Theory,}
%\newblock Springer, (1996).

\bibitem{MW}
{R.~Melrose and J.~Wunsch,} 
\newblock {\em Propagation of Singularities for the Wave Equation on Conic Manifolds,}
\newblock Invent. Math. 156 (2004), no. 2, 235Ð299.

\bibitem{RM93}
{R.~Melrose,}
\newblock {\em The Atiyah-Patodi-Singer Index Theorem,}
\newblock A K Peters, (1993).

\bibitem{RM94}
{R.~Melrose,}
\newblock {\em Spectral and scattering theory for the Laplacian on asymptotically Euclidian spaces,}
\newblock Lecture Notes in Pure and Appl. Math., 161, Dekker, New York, (1994). 

\bibitem{RM95}
{R.~Melrose,}
\newblock {\em Geometric Scattering Theory,}
\newblock Cambridge University Press, (1995).

\bibitem{MR}
{M.~Riesz,}
\newblock {\em Sur les fonctions conjugu\'ees,}
\newblock Math. Zeit. 27 (1927), 218-244.

%\bibitem[RMPRE]{RMPRE}{R.~Melrose,}\newblock {\em Differential analysis on manifolds with corners,}\newblock Book in preparation, http://www.math.mit.edu/$\sim$rbm/book.html.

%\bibitem[RS]{RS}{M.~Reed and B.~Simon,}\newblock {\em Methods of modern mathematical physics,}  \newblock Academic Press, (1975).

\bibitem{LSC} {L.~Saloff-Coste,} 
\newblock \emph{Analyse sur les groupes de Lie \`a croissance polynomiale,} Ark. Mat. 28 (1990), no. 2, 315-331.

\bibitem{ZWS}
{Z. W.~Shen,}
\newblock {\em $L^p$ estimates for Schr\"odinger operators with certain potentials,}
\newblock Ann. Inst. Fourier (Grenoble) 45(1995), no. 2, 513-546.



\bibitem{RSS}
{R. S.~Strichartz,}
\newblock {\em Analysis of the Laplacian on the complete Riemannian manifold,}
\newblock J. Funct. Anal. 52 (1983), no. 1, 48-79.

%\bibitem[TZ]{TZ}{S.~Tang and M.~Zworski,}\newblock {\em Resonance Expansions of Scattered Waves,}\newblock Comm. Pure Appl. Math. 53 (2000), no. 10, 1305-1334.

\bibitem{XPW}
{X. P.~Wang,}
\newblock {\em Aymptotic expansion in time of the Schr\"odinger group on conical manifolds,}
\newblock Ann. Inst. Fourier (Grenoble) 56 (2006), no. 6, 1903-1945.


\end{thebibliography}
\end{document}